\documentclass[11pt]{amsart}
\usepackage{mathrsfs}
\usepackage{cases}
\usepackage{amssymb}
\usepackage{amsfonts}
\usepackage{amssymb,amsfonts}

\usepackage{amsmath,amssymb,amsthm,hyperref}
\setlength{\headheight}{8pt} \setlength{\textheight}{22.5cm}
\setlength{\textwidth}{16cm} \setlength{\oddsidemargin}{0cm}
\setlength{\evensidemargin}{0cm} \setlength{\topmargin}{0cm}

\allowdisplaybreaks

\begin{document}
\theoremstyle{plain}
\newtheorem{thm}{Theorem}[section]
\newtheorem{theorem}[thm]{Theorem}
\newtheorem{lemma}[thm]{Lemma}
\newtheorem{corollary}[thm]{Corollary}
\newtheorem{corollary*}[thm]{Corollary*}
\newtheorem{proposition}[thm]{Proposition}
\newtheorem{proposition*}[thm]{Proposition*}
\newtheorem{conjecture}[thm]{Conjecture}
\theoremstyle{definition}
\newtheorem{construction}{Construction}
\newtheorem{notations}[thm]{Notations}
\newtheorem{question}[thm]{Question}
\newtheorem{problem}[thm]{Problem}
\newtheorem{remark}[thm]{Remark}
\newtheorem{remarks}[thm]{Remarks}
\newtheorem{definition}[thm]{Definition}
\newtheorem{claim}[thm]{Claim}
\newtheorem{assumption}[thm]{Assumption}
\newtheorem{assumptions}[thm]{Assumptions}
\newtheorem{properties}[thm]{Properties}
\newtheorem{example}[thm]{Example}
\newtheorem{comments}[thm]{Comments}
\newtheorem{blank}[thm]{}
\newtheorem{observation}[thm]{Observation}
\newtheorem{defn-thm}[thm]{Definition-Theorem}

\newcommand{\sM}{{\mathcal M}}


\title{Integrality structures in topological strings I:\\ framed unknot}
       \author{Wei Luo, Shengmao Zhu}
        \address{Department of Mathematics and Center of Mathematical Sciences, Zhejiang University, Hangzhou, Zhejiang 310027, China}
        \email{luowei@cms.zju.edu.cn, szhu@zju.edu.cn}

\begin{abstract}
We study the open string integrality invariants (LMOV invariants)
for toric Calabi-Yau 3-folds with Aganagic-Vafa brane (AV-brane). In
this paper, we focus on the case of the resolved conifold with one
out AV-brane in any integer framing $\tau$, which is the large $N$
duality of the Chern-Simons theory for a framed unknot with integer
framing $\tau$ in $S^3$. We compute the explicit formulas for the
LMOV invariants in genus $g=0$ with any number of holes, and prove
their integrality. For the higher genus LMOV invariants with one
hole, they are reformulated into a generating function $g_{m}(q,a)$,
and we prove that $g_{m}(q,a)\in
(q^{1/2}-q^{-1/2})^{-2}\mathbb{Z}[(q^{1/2}-q^{-1/2})^2,a^{\pm 1/2}]$
for any integer $m\geq 1$.  As a by product, we compute the reduced
open string partition function of $\mathbb{C}^3$ with one AV-brane
in framing $\tau$. We find that, for $\tau\leq -1$, this open string
partition function is equivalent to the Hilbert-Poincar\'e series of
the Cohomological Hall algebra of the $|\tau|$-loop quiver. It gives
an open string GW/DT correspondence.
\end{abstract}
\maketitle \tableofcontents

\section{Introduction}
We study the integrality structures in topological string theory.
Let $X$ be a Calabi-Yau 3-fold, by the work of Gopakumar and Vafa
\cite{GV1}, the closed string free energy $F^X$, which is the
generating function of Gromov-Witten invariants $K_{g,Q}$, has the
following structure:
\begin{align*}
F^X=\sum_{g\geq 0}g_s^{2g-2}\sum_{Q\neq 0}K_{g,Q}e^{-Q
\cdot\omega}=\sum_{g\geq 0, d\geq 1}\sum_{Q\neq
0}\frac{1}{d}N_{g,Q}\left(2\sin\frac{dg_s}{2}\right)^{2g-2}e^{-dQ\cdot
\omega}
\end{align*}
where $N_{g,Q}$ are integers and vanish for large $g$ or $Q$. When
$X$ is a toric Calabi-Yau 3-fold, the above Gopakumar-Vafa
conjecture was proved in \cite{Peng,Konishi}.

In the open string case, let us consider a Calabi-Yau 3-fold $X$
with a Lagrange submanifold $\mathcal{D}$ in it. According to the
work of  Ooguri and Vafa \cite{OV}, the generating function of the
open Gromov-Witten invariants can also be expressed in terms of a
series of new integers which were refined by Labastida, Mari\~no and
Vafa \cite{LM1,LM2,LMV}:
\begin{align} \label{openmultiplecoveringintro}
&\sum_{g\geq 0}\sum_{Q\neq 0}g_s^{2g-2+l(\mu)}K_{\mu,g,Q}e^{-Q\cdot
\omega}\\\nonumber &=\sum_{g\geq 0}\sum_{Q\neq
0}\sum_{d|\mu}\frac{(-1)^{l(\mu)+g}}{\prod_{i=1}^{l(\mu)}\mu_i}d^{l(\mu)-1}n_{\mu/d,g,Q}\prod_{j=1}^{l(\mu)}
(2\sin\frac{\mu_jg_s}{2})(2\sin\frac{dg_s}{2})^{2g-2}e^{dQ\cdot
\omega}.
\end{align}
These new integer $n_{\mu,g,Q}$ (here $\mu$ denote a partition of a
positive integer) are called the LMOV invariants in this paper.

Although for any toric Calabi-Yau 3-fold with Agangica-Vafa brane
(AV-brane for short) \cite{AV}, we have the method of topological
vertex \cite{AKMV,LLLZ} to compute the open string partition
function and furthermore the open Gromov-Witten invariants
$K_{\mu,g,Q}$, it is difficult to compute the corresponding LMOV
invariants $n_{\mu,g,Q}$ at the righthand side of the formula
(\ref{openmultiplecoveringintro}).

We will study these LMOV invariants $n_{\mu,g,Q}$.  In this paper,
we only focus on a special toric Calabi-Yau 3-fold, i.e. the
resolved conifold $\hat{X}$ with one special Lagrangian submanifold
(AV-brane $D_\tau$ in integer framing $\tau$). More general toric
Calabi-Yau 3-fold will be discussed in a separated paper.

According to the large N duality, the open string theory of
$(\hat{X},D_\tau)$ is the large N duality of the Chern-Simons theory
of $(S^3,U_\tau)$, where $U_\tau$ denotes a framed unknot (trivial
knot) with integer framing $\tau$. The large N duality of
Chern-Simons and topological string theory was proposed by Witten
\cite{W3}, and developed further by \cite{GV2,OV,LMV}. Later,
Mari\~no and Vafa \cite{MV} generalized it to the case of the knot
including integer framing. The large N duality of $(\hat{X},D_\tau)$
and $(S^3,U_\tau)$ is expressed in terms of the following identity:
\begin{align} \label{LargeNdualiyofunknot}
Z_{CS}^{(S^3,U_\tau)}(q,a;\mathbf{x})=Z_{str}^{(\hat{X},D_{\tau})}(g_s,a;\mathbf{x}),
\ q=e^{\sqrt{-1}g_s}
\end{align}
where the explicit expressions of the above two partitions in
identity (\ref{LargeNdualiyofunknot}) are given by the formulas
(\ref{partitionfunctionframedunknot}) and
(\ref{partitonfunctionresolvedconifold}) respectively.  The identity
(\ref{LargeNdualiyofunknot}) implies the Mari\~no-Vafa formula
\cite{MV,LLZ,OP}, a very powerful Hodge integral identity, which
implies various important results in intersection theory of moduli
spaces of curves, see \cite{Zhu0} for a review of the applications
of Mari\~no-Vafa formula. The identity (\ref{LargeNdualiyofunknot})
was proved by J. Zhou \cite{Zhou} based on his previous joint works
with C.-C. Liu and K. Liu \cite{LLZ,LLZ2}.

On the other hand side, through mirror symmetry, the partition
function $Z_{str}^{(\hat{X},D_{\tau})}(g_s,a;\mathbf{x})$ can also
be computed by B-model. The mirror geometry information of
$(\hat{X},D_{\tau})$ is encoded in the mirror curve
$\mathcal{C}_{\hat{X}}$. The disc counting invariants of
$(\hat{X},D_{\tau})$ were given by the coefficients of the
superpotential related to the mirror curve \cite{AV,AKV}, and this
fact was proved in \cite{FL}. Furthermore, the open Gromov-Witten
invariants of higher genus with more holes can be computed by the
Eynard-Orantin topological recursions \cite{EO1}. This approach
named the BKMP conjecture, was proposed by Bouchard, Klemm, Mari\~no
and Pasquetti \cite{BKMP}, and was fully proved in \cite{EO2,FLZ}
for any toric Calabi-Yau 3-fold with  AV-brane, so we can also use
the BKMP method to compute the LMOV invariants for
$(\hat{X},D_{\tau})$.

In conclusion, now we have three different approaches to compute the
open string partition function
$Z_{str}^{(\hat{X},D_{\tau})}(g_s,a;\mathbf{x})$ and their LMOV
invariants $n_{\mu,g,Q}(\tau)$ : topological vertex
\cite{AKMV,LLLZ}, Chern-Simons partition function
(\ref{partitionfunctionframedunknot})  and the BKMP method
\cite{BKMP}.

In this paper, we first compute the genus $0$ LMOV invariants by
BKMP method. At first step, we illustrate the computations of the
mirror curve of $(\hat{X},D_{\tau})$ by using the new approach of
\cite{AV2}. It turns out that the mirror curve is given by:
\begin{align} \label{mirrorcurve}
y-1-a^{-\frac{1}{2}}(-1)^\tau xy^{\tau}(ay-1)=0.
\end{align}
By using mirror curve (\ref{mirrorcurve}), we obtain the genus $0$
with one-hole LMOV invariants $n_{m,0,l-\frac{m}{2}}(\tau)$ which is
denoted by $n_{m,l}(\tau)$ for brevity:
\begin{align*}
n_{m,l}(\tau)=\sum_{d|m,d|l}\frac{\mu(d)}{d^2}c_{\frac{m}{d},\frac{l}{d}}(\tau),
\end{align*}
where
\begin{align*}
c_{m,l}(\tau)=-\frac{(-1)^{m\tau+m+l}}{m^2}\binom{m}{l}\binom{m\tau+l-1}{m-1}.
\end{align*}
We prove the integrality of the number $n_{m,l}(\tau)$.
\begin{theorem}
For any $\tau\in \mathbb{Z}$, $m\geq 1, l\geq 0$, we have
$n_{m,l}(\tau) \in \mathbb{Z}$.
\end{theorem}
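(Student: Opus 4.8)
The plan is to clear denominators and reduce the rational identity to a family of prime-power divisibilities, and finally to a single binomial congruence. First I would record that
$$b_{m,l}(\tau):=(-1)^{m\tau+m+l}\binom{m}{l}\binom{m\tau+l-1}{m-1}$$
is an integer (a product of two binomial coefficients, the second interpreted through $\binom{-n}{k}=(-1)^{k}\binom{n+k-1}{k}$ when $\tau\le 0$). Since $c_{m,l}(\tau)=-b_{m,l}(\tau)/m^{2}$ and $d^{2}(m/d)^{2}=m^{2}$, the defining sum collapses to
\[
n_{m,l}(\tau)=-\frac{1}{m^{2}}\sum_{d\mid\gcd(m,l)}\mu(d)\,b_{m/d,\,l/d}(\tau).
\]
Thus the theorem is equivalent to the divisibility $m^{2}\mid S$, where $S:=\sum_{d\mid\gcd(m,l)}\mu(d)\,b_{m/d,\,l/d}(\tau)$. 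As divisibility by $m^{2}$ is equivalent to divisibility by $p^{2a}$ for each prime power $p^{a}\,\|\,m$, I would fix such a prime $p$ and prove $p^{2a}\mid S$.

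Only squarefree $d$ contribute, and I would split each such $d$ according to whether $p\mid d$. When $p\mid\gcd(m,l)$ this produces the telescoped form $S=\sum_{p\nmid d}\mu(d)\bigl(b_{M,L}(\tau)-b_{M/p,\,L/p}(\tau)\bigr)$ with $M=m/d$, $L=l/d$, so that $v_{p}(M)=a$. This separates two regimes. If $p\nmid l$ (hence $p\nmid d$ and $p\nmid L$ for every surviving term), I would show each single $b_{M,L}$ is already divisible by $p^{2a}$: from $L\binom{M}{L}=M\binom{M-1}{L-1}$ one gets $v_{p}\binom{M}{L}\ge a$, and a Kummer/Legendre carry count (after the reflection to nonnegative parameters when $\tau\le 0$) gives $v_{p}\binom{M\tau+L-1}{M-1}\ge a$, whence $v_{p}(b_{M,L})\ge 2a$. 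If instead $p\mid l$ (or $l=0$), the whole problem reduces to a single congruence.

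The heart is therefore the central congruence: for $v_{p}(M)=a\ge 1$ and $p\mid L$, one has $b_{M,L}(\tau)\equiv b_{M/p,\,L/p}(\tau)\pmod{p^{2a}}$. To prove it I would pass to ratios. Writing $M'=M/p$, $L'=L/p$ and $N=M'\tau+L'$, the elementary identity $\binom{pN-1}{pM'-1}=\tfrac{M'}{N}\binom{pN}{pM'}$ shows that both $\binom{M}{L}\big/\binom{M'}{L'}$ and $\binom{M\tau+L-1}{M-1}\big/\binom{M'\tau+L'-1}{M'-1}$ equal the \emph{clean} Jacobsthal ratios $\binom{pM'}{pL'}\big/\binom{M'}{L'}$ and $\binom{pN}{pM'}\big/\binom{N}{M'}$ (the two signs agreeing for odd $p$). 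I would then invoke the Kazandzidis congruence $\binom{pn}{pk}\equiv\binom{n}{k}\pmod{p^{3}nk(n-k)\,\mathbb{Z}_{(p)}}$ for each ratio: its built-in factor $nk(n-k)$ supplies extra powers of $p$ precisely because here $n,k$ are themselves divisible by $p$ (indeed $v_{p}(M')=a-1$), and adding these valuation contributions is exactly what is needed to reach the exponent $2a$.

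The hard part will be the valuation bookkeeping in this last step: I must verify that the exponents furnished by the two Kazandzidis factors always sum to at least $2a$ across every admissible configuration of $v_{p}(L)$ relative to $a$ (including $v_p(L)>a$ and the degenerate $L=0$), and simultaneously handle the weaker congruences and the sign twist at $p=2$ and $p=3$, where the clean gain of a factor $p^{3}$ is unavailable and the signs of $b_{M,L}$ and $b_{M/p,L/p}$ need not formally coincide (my computations suggest any such discrepancy is absorbed by the parities of the Jacobsthal ratios, but this must be tracked explicitly). As a consistency check and a possible cleaner alternative, I note that $c_{m,l}(\tau)=-(-1)^{m\tau+m+l}\,\tfrac{1}{m}\binom{m}{l}\cdot\tfrac{1}{m\tau+l}\binom{m\tau+l}{m}$ is a product of two cycle-lemma factors, which hints that $\pm n_{m,l}(\tau)$ admits a necklace/orbit-counting interpretation rendering its integrality manifest; I would use this to validate small cases and to safeguard the $p$-adic estimates above.
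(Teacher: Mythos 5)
Your overall architecture coincides with the paper's: reduce to $p^{2\alpha}\mid S$ for each prime power $p^{\alpha}\,\|\,m$, telescope the M\"obius sum by pairing $d$ with $pd$, dispose of the $p\nmid l$ terms by showing each factor $\binom{M}{L}=\tfrac{M}{L}\binom{M-1}{L-1}$ and $\binom{M\tau+L-1}{M-1}=\tfrac{M}{M\tau+L}\binom{M\tau+L}{M}$ separately contributes $p^{\alpha}$, and then prove the central congruence $b_{M,L}\equiv b_{M/p,L/p}\pmod{p^{2\alpha}}$. Where you differ is the engine for that congruence: the paper works directly with $f_p(n)=\prod_{i\le n,\,p\nmid i}i$ and the elementary lemma $f_p(p^{\gamma}n)\equiv f_p(p^{\gamma})^{n}\pmod{p^{2\gamma}}$, which lets it write the ratio of the two $b$'s as a quotient of $f_p$'s whose exponents cancel; this yields divisibility $p^{2\max(\alpha-\beta,0)}$ from the prefactor times $p^{2\min(\alpha,\beta)}$ from the ratio, exactly $p^{2\alpha}$, in a self-contained way. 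You instead invoke Kazandzidis. Your worry about the bookkeeping is unfounded for $p\ge 5$: with $\beta=v_p(l)\ge 1$ one gets $v_p\bigl(b_{M/p,L/p}\bigr)\ge\max(\alpha-\beta,0)$ from the cycle-lemma factorizations and the Kazandzidis modulus contributes at least $3+(\alpha-1)+(\beta-1)+\min(\alpha-1,\beta-1)$, so the total is $2\alpha+\beta$, with a power of $p$ to spare — enough slack that even the weaker $p=2,3$ forms of Kazandzidis suffice for the magnitude.

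The one place your plan is not yet a proof is exactly where the paper spends a dedicated lemma (its Lemma on the $p=2$ case): the sign discrepancy between $(-1)^{m\tau+m+l}$ and $(-1)^{(m\tau+m+l)/2}$ when $2\,\|\,\gcd$-data, which cannot be absorbed into a magnitude estimate because the required congruence is modulo $4$ at the critical level. The paper resolves it by the refinement $f_2(2n)\equiv(-1)^{[n/2]}\pmod 4$ and an explicit parity check of $[\tfrac{l}{4}]+[\tfrac{m-l}{4}]+[\tfrac{m\tau+l}{4}]+[\tfrac{m(\tau-1)+l}{4}]+\tfrac{m\tau+m+l}{2}$ for $\tau$ even and odd separately; your "the discrepancy is absorbed by the parities of the Jacobsthal ratios" is the right guess but must be carried out, including the extra $(-1)^{pM'}$ versus $(-1)^{M'}$ signs introduced by the reflection $\binom{-n}{k}=(-1)^{k}\binom{n+k-1}{k}$ when $\tau\le 0$. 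Filling in that $2$-adic sign computation would complete your argument; everything else in the proposal closes.
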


For LMOV invariants of genus 0 with two holes, we study the Bergmann
kernel expansion in the BKMP construction, and find an explicit
formula for the LMOV invariants
$n_{(m_1,m_2),0,\frac{m_1+m_2}{2}}(\tau)$ which is denoted by
$n_{(m_1,m_2)}(\tau)$ for short,
\begin{align*}
    n_{(m_1,m_2)}(\tau)& =  \frac{1}{m_1+m_2}\sum_{d\mid m_1,d\mid m_2} \mu(d) (-1)^{(m_1+m_2)(\tau+1)/d} \nonumber \\
                     & \cdot
                     \binom{(m_1\tau+m_1)/d-1}{m_1/d}\binom{(m_2\tau+m_2)/d}{m_2/d}.
\end{align*}
Then, we prove that
\begin{theorem}
For $m_1,m_2\geq 1$, and $\tau\in \mathbb{Z}$,
$n_{(m_1,m_2)}(\tau)\in \mathbb{Z}$.
\end{theorem}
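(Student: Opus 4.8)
The plan is to clear the denominator $m_1+m_2$ and prove the resulting divisibility one prime at a time, reducing everything to two local statements about binomial coefficients. Write $e=\tau+1$, $g=\gcd(m_1,m_2)$, $m_i=gb_i$ with $\gcd(b_1,b_2)=1$, and $M=m_1+m_2=g(b_1+b_2)$. Reindexing the divisor sum by $k=g/d$ turns $n_{(m_1,m_2)}(\tau)\in\mathbb{Z}$ into the assertion $M\mid S$, where
\begin{align*}
S=\sum_{k\mid g}\mu(g/k)\,a_k,\qquad a_k=(-1)^{(b_1+b_2)ek}\binom{b_1ek-1}{b_1k}\binom{b_2ek}{b_2k}.
\end{align*}
If $\tau=-1$ then $e=0$ and $a_k=0$, so $S=0$; if $\tau\le-2$ the reflection $\binom{-N}{K}=(-1)^K\binom{N+K-1}{K}$ rewrites the two binomials as ones with nonnegative entries of the same shape (with the roles of $b_1,b_2$ swapped), and the argument below applies unchanged, so we henceforth take $e\ge1$. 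It now suffices to prove $v_p(S)\ge v_p(M)=\alpha+\beta$ for every prime $p$, where $\alpha=v_p(g)$ and $\beta=v_p(b_1+b_2)$; note that $g\mid M$, and that $\beta>0$ forces $p\nmid b_1b_2$ because $\gcd(b_1,b_2)=1$.

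First I would isolate two local properties of the sequence $a_k$. (A) (\emph{divisibility}) For every $n\ge1$ one has $v_p(a_n)\ge\beta$. By Kummer's theorem this valuation equals the total number of carries in the base-$p$ additions $b_1n+(b_1(e-1)n-1)$ and $b_2n+b_2(e-1)n$, and I expect the congruence $b_1\equiv-b_2\pmod{p^{\beta}}$ (together with $p\nmid b_1b_2$) to force at least $\beta$ such carries in all. (B) (\emph{multiplicativity}) Since $\binom{pP-1}{pQ}=\binom{pP}{pQ}\tfrac{P-Q}{P}$ and $\binom{P-1}{Q}=\binom{P}{Q}\tfrac{P-Q}{P}$, the offset $-1$ in the first binomial cancels in the ratio $a_{pn}/a_n$, so for odd $p$
\begin{align*}
a_{pn}=a_n\,\rho_1\rho_2,\qquad \rho_i=\binom{pP_i}{pQ_i}\Big/\binom{P_i}{Q_i},\quad (P_i,Q_i)=(b_ien,\,b_in),
\end{align*}
and the Jacobsthal--Kazandzidis congruence gives $v_p(\rho_i-1)\ge 3+v_p\!\big(P_iQ_i(P_i-Q_i)\big)\ge v_p(n)+1$, whence $v_p(\rho_1\rho_2-1)\ge v_p(n)+1$.

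Granting (A) and (B), the prime-by-prime bound follows from the usual collapse of the Möbius sum. If $p\nmid g$ then $\alpha=0$ and $v_p(S)\ge\min_{\ell\mid g}v_p(a_\ell)\ge\beta$ by (A). If $p\mid g$, write $g=p^{\alpha}g'$ with $p\nmid g'$; because $\mu$ is supported on squarefree integers, only the exponents $p^{\alpha}$ and $p^{\alpha-1}$ survive and
\begin{align*}
S=\sum_{\ell\mid g'}\mu(g'/\ell)\big(a_{p^{\alpha}\ell}-a_{p^{\alpha-1}\ell}\big).
\end{align*}
For $n=p^{\alpha-1}\ell$ one has $a_{pn}-a_n=a_n(\rho_1\rho_2-1)$, and (A) gives $v_p(a_n)\ge\beta$ while (B) gives $v_p(\rho_1\rho_2-1)\ge v_p(n)+1=\alpha$; hence every summand, and therefore $S$, is divisible by $p^{\alpha+\beta}$. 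Combining the two cases yields $M\mid S$, i.e. $n_{(m_1,m_2)}(\tau)\in\mathbb{Z}$.

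I expect the crux to be property (A): turning $b_1\equiv-b_2\pmod{p^{\beta}}$ into a clean lower bound of $\beta$ carries, uniformly in $n$ and for all $\beta\ge1$, where one must track carries across several base-$p$ digits rather than just the lowest one. A secondary but unavoidable nuisance is the prime $p=2$, where the sign ratio $(-1)^{(b_1+b_2)en(p-1)}$ entering step (B) is no longer trivial and the Jacobsthal--Kazandzidis estimate degenerates; I would dispatch $p=2$ (and the weaker $p=3$ bound) by a direct valuation computation.
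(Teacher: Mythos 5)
Your proposal is correct in outline and takes essentially the same route as the paper: both arguments work prime by prime on $m_1+m_2$, use the squarefree support of $\mu$ to collapse the sum into differences of the terms at $d$ and $pd$, and split the required valuation into a term-wise bound (your property (A) is exactly the paper's lemma that $p^{\alpha-\beta}$ divides $\binom{a\tau+a-1}{a}\binom{b\tau+b}{b}$ when $p^{\beta}\mid\mid\gcd(a,b)$ and $p^{\alpha}\mid a+b$, proved there via Legendre's formula, the additive counterpart of your Kummer carry count) plus a congruence between consecutive levels. The only substantive divergence is that in the ratio step you invoke Jacobsthal--Kazandzidis where the paper uses the elementary congruence $f_p(p^{\gamma}k)\equiv(-1)^{k}\pmod{p^{\gamma}}$ for $f_p(n)=\prod_{i\leq n,\,p\nmid i}i$ (which treats $p=2,3$ uniformly, up to the sign bookkeeping you also flag); note that property (A), which you rightly single out as the crux, is stated only as an expectation in your write-up, and it is precisely the lemma the paper does prove in full.
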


For the genus $0$ LMOV invariants with more than two holes, we can
compute the LMOV invariant $n_{\mu,g,Q}(\tau)$ for general $Q$ by
using the BKMP construction. But it is hard to give an explicit
formula for general $Q$, except the case $Q=\frac{|\mu|}{2}$, in
which the Mari\~no-Vafa formula holds.
\begin{align*}
n_{\mu,0,\frac{|\mu|}{2}}(\tau)=(-1)^{l(\mu)}\sum_{d|\mu}\mu(d)d^{l(\mu)-1}K^{\tau}_{\frac{\mu}{d},0,\frac{|\mu|}{2d}}
\end{align*}
where
\begin{align*}
K^\tau_{\mu,0,\frac{|\mu|}{2}}=(-1)^{|\mu|\tau}[\tau(\tau+1)]^{l(\mu)-1}\prod_{i=1}^{l(\mu)}
\binom{\mu_i(\tau+1)-1}{\mu_i-1}\left(\sum_{i=1}^{l(\mu)}\mu_i\right)^{l(\mu)-3}.
\end{align*}
It is clear that $K^\tau_{\mu,0,\frac{|\mu|}{2}}\in \mathbb{Z}$, for
any $\tau\in \mathbb{Z}$ and since $l(\mu)\geq 3$, it immediately
implies that:
\begin{theorem}
For a partition $\mu$ with $l(\mu)\geq 3$ and $\tau \in \mathbb{Z}$,
$n_{\mu,0,\frac{|\mu|}{2}}(\tau)\in \mathbb{Z}$.
\end{theorem}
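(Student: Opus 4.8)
The plan is to prove integrality termwise, reducing everything to the explicit closed form for $K^\tau_{\mu,0,|\mu|/2}$ together with the elementary fact that a generalized binomial coefficient with non-negative integer lower index is always an integer. First I would substitute $\nu=\mu/d$ into the displayed formula for $K$, observing that $d\mid \mu$ forces $d\mid\mu_i$ for each $i$, so $\nu=\mu/d$ is again a partition with $l(\nu)=l(\mu)$ and $|\nu|=|\mu|/d$. Hence $K^{\tau}_{\mu/d,0,|\mu|/(2d)}$ is precisely
\[
(-1)^{|\mu|\tau/d}\,[\tau(\tau+1)]^{l(\mu)-1}\prod_{i=1}^{l(\mu)}\binom{(\mu_i/d)(\tau+1)-1}{\mu_i/d-1}\left(\frac{|\mu|}{d}\right)^{l(\mu)-3},
\]
where $|\mu|/d$ and $|\mu|\tau/d$ are genuine integers.

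Next I would check that every factor on the right is an integer. The sign $(-1)^{|\mu|\tau/d}$ is $\pm1$; the factor $[\tau(\tau+1)]^{l(\mu)-1}$ is an integer power (with exponent $\geq 2$, since $l(\mu)\geq 3$) of the integer $\tau(\tau+1)$; each binomial $\binom{(\mu_i/d)(\tau+1)-1}{\mu_i/d-1}$ has non-negative integer lower index $\mu_i/d-1\geq 0$ and integer upper index, hence lies in $\mathbb{Z}$ (using $\binom{N}{k}=(-1)^k\binom{k-N-1}{k}$ to handle a negative upper index when necessary); and, crucially, $(|\mu|/d)^{l(\mu)-3}$ is an integer precisely because the hypothesis $l(\mu)\geq 3$ makes the exponent $l(\mu)-3$ non-negative. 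This last point is the sole place where the hypothesis $l(\mu)\geq 3$ is used, and it is exactly what breaks down in the one- and two-hole cases handled separately in the earlier theorems.

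Finally I would feed this back into the Möbius-type divisor sum for $n_{\mu,0,|\mu|/2}(\tau)$. The overall sign $(-1)^{l(\mu)}$ is $\pm1$, each Möbius value $\mu(d)\in\{-1,0,1\}$, each power $d^{l(\mu)-1}$ is a positive integer, and each summand $K^{\tau}_{\mu/d,0,|\mu|/(2d)}$ has just been shown to be an integer; since the sum over $d\mid\mu$ is finite, the result is a finite sum of integers and therefore lies in $\mathbb{Z}$. I expect no genuine obstacle here: the entire content of the statement is carried by the closed form for $K^\tau$, and once that formula is granted the integrality is immediate, the only delicate observation being the non-negativity of the exponent $l(\mu)-3$.
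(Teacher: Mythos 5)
Your proposal is correct and follows essentially the same route as the paper: both rest entirely on the closed form for $K^\tau_{\mu,0,|\mu|/2}$, observe that each factor (sign, $[\tau(\tau+1)]^{l(\mu)-1}$, the generalized binomials, and $(|\mu|/d)^{l(\mu)-3}$ with non-negative exponent thanks to $l(\mu)\geq 3$) is an integer, and conclude via the finite M\"obius divisor sum. Your write-up merely makes explicit the details the paper leaves as ``it is clear.''
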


Next, we study the Chern-Simons partition function
$Z_{CS}^{(S^3,U_\tau)}(q,a;\mathbf{x})$ whose explicit formula is
given in (\ref{partitionfunctionframedunknot}). Following the works
of \cite{MV,LP3}, we formulate the LMOV conjecture for a general
framed knots, see Conjectures \ref{LMOVframedknot} and
\ref{LMOVframedknot2}. The mathematical structures of the LMOV
conjecture for general link were first studied by K. Liu and P. Peng
\cite{LP1}. Then we formulate the higher genus and one hole LMOV
invariants $n_{m,g,Q}(\tau)$ into a unified generating function
$g_{m}(q,a)$. The integrality of the LMOV invariants
$n_{m,g,Q}(\tau)$ is equivalent to the following theorem which will
be proved in Section 4.5.

\begin{theorem} \label{LMOVframedknot3}
Let $g_m(q,a)=\sum_{d|m}\mu(d)\mathcal{Z}_{m/d}(q^d,a^d)$, where
\begin{align*}
\mathcal{Z}_m(q,a)=(-1)^{m\tau}\sum_{|\nu|=m}\frac{1}{\mathfrak{z}_\nu}\frac{\{m\nu\tau\}}{\{m\}\{m\tau\}}\frac{\{\nu\}_a}{\{\nu\}},
\end{align*}
see formula (\ref{quantuminteger}) for the definitions of the above
quantum integers.  For any integer $m\geq 1$ and any $\tau \in
\mathbb{Z}$, there exist integers $n_{m,g,Q}(\tau)$, such that
\begin{align*}
g_m(q,a)=\sum_{g\geq 0}\sum_{Q}n_{m,g,Q}(\tau)z^{2g-2}a^Q\in
z^{-2}\mathbb{Z}[z^2,a^{\pm \frac{1}{2}}],
\end{align*}
where $z=q^{\frac{1}{2}}-q^{-\frac{1}{2}}=\{1\}$.
\end{theorem}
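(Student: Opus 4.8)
The plan is to prove the statement by establishing four properties of $g_m(q,a)$, regarded as a rational function of $q^{1/2}$ with coefficients in $\mathbb{Q}[a^{\pm 1/2}]$: (i) invariance under $q\mapsto q^{-1}$; (ii) the absence of poles at any root of unity other than $q=1$; (iii) a pole of order at most $2$ at $q=1$; and (iv) integrality of the resulting coefficients. Properties (i)--(iii) together force $g_m\in z^{-2}\mathbb{Q}[z^2,a^{\pm 1/2}]$, since a rational function of $q^{1/2}$ that is symmetric in $q\leftrightarrow q^{-1}$ and whose only pole is a double pole at $q=1$ is a Laurent polynomial in $z^2=\{1\}^2$ with bottom term $z^{-2}$; property (iv) then upgrades $\mathbb{Q}$ to $\mathbb{Z}$. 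The factors $\{\nu\}_a$ manifestly lie in $\mathbb{Z}[a^{\pm 1/2}]$, so the $a$-dependence never obstructs membership and I can focus on the $q$-variable throughout.

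For (i) I would count signs: under $q\mapsto q^{-1}$ each quantum integer $\{n\}$ changes sign, while $\{\nu\}_a$ is untouched, so in each summand of $\mathcal{Z}_m$ the numerator $\{m\nu\tau\}$ and the denominator $\{m\}\{m\tau\}\{\nu\}$ contribute matching sign factors and cancel; hence $\mathcal{Z}_m$, and therefore $g_m$, is invariant. For (iii) I would expand near $q=1$ using $\{n\}\sim n\{1\}$: the partition sum tends to a finite limit while the single prefactor $1/(\{m\}\{m\tau\})\sim 1/(m^2\tau\, z^2)$ supplies exactly a double pole, so $z^2 g_m$ is regular at $q=1$.

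The substantive analytic and arithmetic content, (ii) and (iv), I would treat together by induction on $m$ using the Möbius-inverse relation $\mathcal{Z}_m(q,a)=\sum_{d\mid m}g_{m/d}(q^d,a^d)$, which rewrites the definition as $g_m(q,a)=\mathcal{Z}_m(q,a)-\sum_{d\mid m,\,d>1}g_{m/d}(q^d,a^d)$. By the inductive hypothesis each $g_{m/d}$ lies in $z^{-2}\mathbb{Z}[z^2,a^{\pm 1/2}]$, so after the substitution $q\mapsto q^d$ the term $g_{m/d}(q^d,a^d)$ lies in $\{d\}^{-2}\mathbb{Z}[\{d\}^2,a^{\pm d/2}]$ and in particular is regular away from the primitive $d$-th roots of unity. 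The point of (ii) is that the poles of $\mathcal{Z}_m$ at each primitive $n$-th root of unity $\zeta$ with $n>1$ cancel against those of the subtracted terms $g_{m/d}(q^d,a^d)$ with $n\mid d$, and I would verify this by comparing the principal parts of the Laurent expansions at $\zeta$, where the cancellation is governed by the vanishing of $\sum\mu(d)$-type sums. After this cancellation $g_m$ has, like the individual terms, only the double pole at $q=1$ already located in (iii).

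The hard part will be (iv), the integrality of the coefficients, and I expect this to be the main obstacle. Here I would argue prime by prime: fixing a prime $p$, I would establish a Frobenius-type congruence $\mathcal{Z}_m(q,a)\equiv \mathcal{Z}_{m/p}(q^p,a^p)\pmod{p}$ in the appropriate localized coefficient ring when $p\mid m$ --- the mechanism being that, modulo $p$, the factor $1/\mathfrak{z}_\nu$ concentrates the partition sum on those $\nu$ whose part-multiplicities are divisible by $p$, on which the substitution $q\mapsto q^p$ reproduces $\mathcal{Z}_{m/p}$ --- and then feed this congruence into the inductive identity for $g_m$ to conclude that $p$ does not divide the denominators. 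Extra care is needed at the primes dividing $m\tau$, which enter through the prefactor $1/(\{m\}\{m\tau\})$ and hence through the leading $z^{-2}$ coefficient; these I would control by a direct analysis of that prefactor together with the congruence above. Assembling the $p$-integrality for all primes yields (iv) and completes the proof.
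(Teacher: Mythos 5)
Your four-step skeleton --- $q\mapsto q^{-1}$ symmetry, location of poles, order of the pole at $q=1$, and integrality of coefficients --- is the same architecture as the paper's (Lemmas \ref{mmtaulemma}, \ref{lemmaprime}, \ref{integralitylemma} do exactly your (ii)+(iii), (i), and (iv)), but both of the steps you yourself identify as substantive are left at the level of a mechanism that does not survive inspection. A preliminary point first: invariance under $q\mapsto q^{-1}$ alone does not force membership in $\mathbb{Q}[z^2,a^{\pm 1/2}]$, since the invariant ring contains $q^{1/2}+q^{-1/2}$, which is not a polynomial in $z^2=q-2+q^{-1}$; the paper must also check invariance under $q^{1/2}\mapsto -q^{1/2}$, which it does via the parity of $m|\mu|\tau-d|\mu|-m\tau-m$, and your step (i) needs this supplement.

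The two real gaps are in (ii) and (iv). For (ii), your induction gives only that $g_{m/d}(q^d,a^d)$ lies in $\{d\}^{-2}\mathbb{Z}[\{d\}^2,a^{\pm d/2}]$; it does \emph{not} give its principal part at a primitive $n$-th root of unity $\zeta$, which is governed by the coefficient of $z^{-2}$ in $g_{m/d}$ --- a quantity the inductive hypothesis does not supply. So ``comparing principal parts'' cannot be executed from the induction alone, and the cancellation is not a formal $\sum_d\mu(d)=0$ statement: in the paper one must evaluate the partition sum modulo $\Phi_n(q)^2$ explicitly (this is where the generating-function identity $c_m(k,y)=\bigl[((1-t/y)/(1-ty))^k\bigr]_{t^m}$ enters) to see that the terms indexed by $d$ and $pd$ actually agree modulo $\Phi_n(q)^2$ before their M\"obius coefficients cancel; and even then the case $2\parallel m$, $n=2$, $\tau$ odd must be handled separately because the signs $(-1)^{m\tau/d}$ and $(-1)^{m\tau/(2d)}$ then disagree. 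For (iv), your Frobenius congruence $\mathcal{Z}_m(q,a)\equiv\mathcal{Z}_{m/p}(q^p,a^p)\pmod{p}$ is asserted in an unspecified localization and never established; note that the paper needs no such congruence and in fact does not use the M\"obius structure for integrality at all. It shows by an explicit product formula and Gaussian binomial coefficients that $\{m\}\{m\tau\}\mathcal{Z}_{m/d}(q^d,a^d)\in\mathbb{Z}[q^{\pm\frac12},a^{\pm\frac12}]$ for every $d\mid m$ (Lemma \ref{integralitylemma}), hence the same for $\{m\}\{m\tau\}g_m$; combined with $g_m\in z^{-2}\mathbb{Q}[z^2,a^{\pm\frac12}]$ and the fact that the extreme $q^{1/2}$-coefficient of $\{m\}\{m\tau\}z^{2g}$ is $\pm 1$, integrality of the $n_{m,g,Q}(\tau)$ follows by a downward leading-coefficient descent. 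If you want to salvage your plan, the cleanest repair is to replace your (iv) by this descent and to replace the inductive treatment of (ii) by a direct computation of $\{m\}\{m\tau\}g_m$ modulo $\Phi_n(q)^2$ --- which is precisely the paper's Lemma \ref{mmtaulemma}.
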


In Section 5, we introduce the definition of the reduced open string
partition function motivated by the work \cite{AV2}. And we compute
the reduced open string partition function $
\tilde{Z}^{(\mathbb{C}^3,D_\tau)}_{str}(g_s,x) $ for the trivial
Calabi-Yau 3-fold $(\mathbb{C}^3,D_\tau)$ (see the formula
(\ref{partitionfunctionC3})). For brevity, we let
$Z_\tau(q,x)=\tilde{Z}^{(\mathbb{C}^3,D_\tau)}_{str}(g_s,x)$, it
turns out that
\begin{align*}
Z_\tau(q,x)=\sum_{n\geq
0}\frac{(-1)^{n(\tau-1)}q^{\frac{n(n-1)}{2}\tau+\frac{n^2}{2}}}{(1-q)(1-q^2)\cdots(1-q^n)}x^n
\end{align*}

By comparing with the expression of the Hilbert-Poincar\'e series
$P_m(q,t)$ (see formula (\ref{HPseries}) ) of the Cohomological Hall
algebra \cite{KS} of the $m$-loop quiver \cite{Re}, we obtain the
following open string GW/DT correspondence:
\begin{theorem} \label{openstringGWDT}
For $\tau\leq -1$ (i.e. $-\tau\geq 1$), we have
\begin{align*}
Z_{\tau}(q,x)=P_{-\tau}(q,(-1)^{\tau-1}xq^{\frac{1}{2}}).
\end{align*}
\end{theorem}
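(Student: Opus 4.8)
The plan is to prove the identity by a direct coefficient-by-coefficient comparison of the two sides as formal power series in $x$. Both $Z_\tau(q,x)$ and $P_{-\tau}(q,(-1)^{\tau-1}xq^{1/2})$ are power series in $x$ whose coefficients lie in $\mathbb{Q}(q^{1/2})$, and the hypothesis $\tau\le -1$ guarantees $-\tau\ge 1$, so the right-hand side really is the Hilbert-Poincar\'e series of the Cohomological Hall algebra of an honest $(-\tau)$-loop quiver and formula (\ref{HPseries}) genuinely applies. It therefore suffices to fix $n\ge 0$ and check that the coefficient of $x^n$ agrees on both sides.

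First I would record the coefficient of $x^n$ in $Z_\tau(q,x)$, namely $\dfrac{(-1)^{n(\tau-1)}\,q^{\frac{n(n-1)}{2}\tau+\frac{n^2}{2}}}{\prod_{i=1}^n(1-q^i)}$. Next I would substitute $m=-\tau$ and $t=(-1)^{\tau-1}xq^{1/2}$ into the expression (\ref{HPseries}) for $P_m(q,t)$ and extract the coefficient of $x^n$. Since the denominator $\prod_{i=1}^n(1-q^i)$ is common to both sides, the entire problem collapses to matching the sign and the power of $q$ appearing in the numerator of the $x^n$-coefficient.

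The sign match is immediate: each factor of $x$ in the substitution $t\mapsto(-1)^{\tau-1}xq^{1/2}$ carries a factor $(-1)^{\tau-1}$, so the $n$-th term acquires the prefactor $(-1)^{n(\tau-1)}$, which is exactly the sign in $Z_\tau$. For the power of $q$, the decisive step is that the quiver exponent $(1-m)\binom{n}{2}=(\tau+1)\binom{n}{2}$ read off from (\ref{HPseries}), together with the factor $q^{n/2}$ contributed by the $q^{1/2}$ inside $t$, combine through the elementary identity $n(n-1)+n=n^2$ into $\tau\binom{n}{2}+\frac{n^2}{2}=\frac{n(n-1)}{2}\tau+\frac{n^2}{2}$, which is precisely the $q$-exponent in $Z_\tau$. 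This settles the coefficient comparison for every $n$ and hence the theorem.

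Once the two closed forms are in hand the argument is essentially bookkeeping, so I do not expect a genuine obstacle at this stage; the only points demanding care are the consistent treatment of the half-integer powers of $q$ introduced by the substitution and, if the normalization in (\ref{HPseries}) carries its own signs, the parity simplification $n^2\equiv n\pmod 2$ needed to reconcile the sign and $q$-exponent conventions. The substantive work lies upstream --- in the derivation of the explicit series for $Z_\tau(q,x)$ in Section 5 and in fixing the correct normalization of $P_m(q,t)$ in (\ref{HPseries}) --- after which the stated correspondence falls out by this direct term matching.
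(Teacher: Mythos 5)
Your proposal is correct and takes essentially the same route as the paper: the paper simply rewrites $Z_{\tau}(q,x)$ so that the substitution $t=(-1)^{\tau-1}xq^{\frac{1}{2}}$ becomes visible and compares term by term with the series (\ref{HPseries}), the whole content being exactly the bookkeeping you describe, namely $(\tau+1)\frac{n(n-1)}{2}+\frac{n}{2}=\tau\frac{n(n-1)}{2}+\frac{n^{2}}{2}$ for the $q$-exponent and $(-1)^{n(\tau-1)}$ for the sign.
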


The main property of the Hilbert-Poincar\'e series $P_{m}(q,t)$ is
the following factorization formula:
\begin{theorem}[Conjecture 3.3 \cite{Re} or Theorem 2.3 \cite{KS}]
\label{factorization}
 There exists a product expansion
\begin{align*}
P_{m}(q,(-1)^{m-1}t)=\prod_{n\geq 1}\prod_{k\geq 0}\prod_{l\geq
0}(1-q^{l-k}t^n)^{-(-1)^{(m-1)n}c_{n,k}}
\end{align*}
for nonnegative integers $c_{n,k}$, such that only finitely many
$c_{n,k}$ are nonzero for any fixed $n$.
\end{theorem}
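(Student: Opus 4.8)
The plan is to read the factorization of the Hilbert--Poincar\'e series $P_m(q,t)$ of formula (\ref{HPseries}) as a structural statement about the Cohomological Hall algebra (CoHA) $\mathcal{H}^{(m)}$ of the $m$-loop quiver, rather than as a bare formal identity. Consistently with Theorem \ref{openstringGWDT} one has
\[
P_m(q,t)=\sum_{n\geq 0}\frac{q^{(1-m)\binom{n}{2}}}{\prod_{i=1}^{n}(1-q^{i})}\,t^{n},
\]
and the first step is to recognize the right-hand side as the graded-dimension generating function of $\mathcal{H}^{(m)}=\bigoplus_{n\geq 0}H^{*}_{\mathrm{GL}_n(\mathbb{C})}(\mathrm{Rep}_n)$, where $\mathrm{Rep}_n=\mathrm{End}(\mathbb{C}^n)^{\oplus m}$ is contractible, so that $H^{*}_{\mathrm{GL}_n}(\mathrm{Rep}_n)=H^{*}(B\mathrm{GL}_n)$ has Poincar\'e series $1/\prod_{i=1}^{n}(1-q^{i})$. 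Here $t$ records the dimension $n$ and $q$ the cohomological degree; the prefactor $q^{(1-m)\binom{n}{2}}$ is exactly the shift by the symmetrized Euler form $\chi(n_1,n_2)=(1-m)n_1n_2$ built into the CoHA product.

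The core of the argument is the freeness theorem for symmetric quivers (conjectured as Theorem 2.3 in \cite{KS}, subsequently established by Efimov): $\mathcal{H}^{(m)}$ is a free supercommutative algebra, so that
\[
\mathcal{H}^{(m)}\cong \mathrm{Sym}\bigl(V\otimes \mathbb{Q}[u]\bigr),
\]
where $u$ is an even class of cohomological degree $2$ (coming from the $H^{*}(B\mathbb{G}_m)$ factor) and $V=\bigoplus_{n\geq 1,\,k\geq 0}V_{n,k}$ is the bigraded space of primitive generators, with $V_{n,k}$ the generators of dimension $n$ and the appropriate cohomological degree. I would then set $c_{n,k}:=\dim V_{n,k}$; these are nonnegative integers by construction, and for each fixed $n$ only finitely many are nonzero because $V_n=\bigoplus_k V_{n,k}$ is finite-dimensional (the primitive generators of a given dimension occupy a bounded range of cohomological degrees). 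Taking graded dimensions of a free supercommutative algebra factorizes the Poincar\'e series as a product over generators: an even generator contributes a geometric factor $(1-\text{monomial})^{-1}$ and an odd one an exterior factor $(1+\text{monomial})$, both written uniformly as $(1-(-1)^{\epsilon}\,\cdot)^{-(-1)^{\epsilon}}$, while tensoring each generator with the polynomial tower $\mathbb{Q}[u]$ turns its single factor into the product $\prod_{l\geq 0}$ with the degree shift $q^{l-k}$. Finally, the parity $\epsilon$ of every dimension-$n$ generator is governed by $\chi(n,n)=(1-m)n^{2}\equiv (m-1)n\pmod 2$, which is independent of $k$; this produces the uniform sign $(-1)^{(m-1)n}$ in the exponent, and substituting $t\mapsto(-1)^{m-1}t$ absorbs the global sign and yields precisely the displayed product.

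The main obstacle is precisely the positivity and integrality of the $c_{n,k}$: this is not a formal manipulation of power series but is equivalent to the freeness of $\mathcal{H}^{(m)}$ (equivalently, to the positivity of the Donaldson--Thomas invariants of the $m$-loop quiver). If one prefers to avoid invoking the full CoHA structure theorem, the alternative I would pursue is Reineke's combinatorial route \cite{Re}: express $P_m(q,t)$ through the Harder--Narasimhan recursion inside the quantum torus in terms of the generating series of Poincar\'e polynomials of the moduli spaces $M_n^{\mathrm{st}}$ of stable $m$-loop quiver representations. Since these moduli are smooth projective varieties, their Betti numbers are nonnegative integers, which supplies the $c_{n,k}\geq 0$, and the claimed product expansion emerges from the commutation (quantum-dilogarithm-type) factorization identity in the quantum torus; finiteness for fixed $n$ then reduces to $\dim M_n^{\mathrm{st}}<\infty$. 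Either way, the genuinely hard input is the geometric positivity statement, not the bookkeeping of the product over $(n,k,l)$.
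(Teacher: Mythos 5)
The first thing to note is that the paper does not prove this statement at all: Theorem \ref{factorization} is imported verbatim from the literature (Conjecture 3.3 of \cite{Re}, established as part of the Cohomological Hall algebra story in \cite{KS} and, for the positivity, by Efimov's freeness theorem for symmetric quivers), and the authors use it only as an input to derive the factorization of $Z_\tau(q,x)$ via Theorem \ref{openstringGWDT}. So there is no in-paper argument to compare yours against; what you have written is a reconstruction of the external proof.

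As such a reconstruction, your main route is essentially correct and correctly locates the difficulty. The identification of $P_m(q,t)$ with the bigraded Hilbert series of $\mathcal{H}=\bigoplus_n H^*_{GL_n}(\mathrm{End}(\mathbb{C}^n)^{\oplus m})$, the role of the Euler-form shift $(1-m)\binom{n}{2}$, and the derivation of the product (including the $\prod_{l\geq 0}$ tower from the $\mathbb{Q}[u]$ factor and the sign $(-1)^{(m-1)n}$ from the parity of the generators) from the structure theorem $\mathcal{H}\cong\mathrm{Sym}(V\otimes\mathbb{Q}[u])$ is the standard argument, and you are right that the nonnegativity of $c_{n,k}=\dim V_{n,k}$ is exactly equivalent to this freeness and is the genuinely hard input (it is Efimov's theorem, not something proved in \cite{KS} or \cite{Re}). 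The weak point is your proposed ``alternative'' route: for the $m$-loop quiver there is only one vertex, so every nonzero dimension vector is proportional to every other, there is no nontrivial Harder--Narasimhan stratification, and the relevant moduli of (stable $=$ simple) representations are not smooth projective varieties whose Betti numbers you could quote for positivity. This is precisely why Reineke in \cite{Re} obtains integrality of the $DT_n^{(m)}$ but has to leave nonnegativity as Conjecture 3.3. So the fallback you describe does not actually close the gap; only the CoHA freeness argument (or an equivalent geometric positivity statement) does.
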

The series $DT_n^{(m)}(q)=\sum_{k\geq 0}c_{n,k}q^k$ is called the
quantum Donaldson-Thomas invariant in \cite{Re}.

Besides, we also formulate the reduced LMOV conjecture (see
conjecture \ref{redLMOV}), which can be viewed as a weak form of the
LMOV conjecture due to the original work of \cite{OV}. In
particular, the reduced LMOV conjecture in the case of
$(\mathbb{C}^3, D_\tau)$ says:
\begin{conjecture} \label{redLMOVforC3intro} There exist
nonnegative integers $N_{m,k}(\tau)$, and only finitely many
$N_{m,k}(\tau)$ are nonzero for any fixed $m\geq 1$. Such that
\begin{align*}
Z_{\tau}(q,x)=\prod_{m\geq 1}\prod_{k\in \mathbb{Z}}\prod_{l\geq
0}\left(1-q^{\frac{k}{2}+l}x^m\right)^{N_{m,k}(\tau)}.
\end{align*}
\end{conjecture}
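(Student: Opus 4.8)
The plan is to prove the statement in the range $\tau\le -1$, where it is essentially a repackaging of the Cohomological Hall algebra factorization, and to isolate $\tau\ge 0$ as the genuinely open part. For $\tau\le -1$ I would first apply the open string GW/DT correspondence of Theorem \ref{openstringGWDT} to write
$$Z_\tau(q,x)=P_{-\tau}\big(q,(-1)^{\tau-1}xq^{1/2}\big).$$
Writing $m=-\tau\ge 1$ and noting $(-1)^{\tau-1}=(-1)^{m-1}$, the second argument $(-1)^{\tau-1}xq^{1/2}$ is exactly of the form $(-1)^{m-1}t$ with $t=xq^{1/2}$, so Theorem \ref{factorization} applies verbatim and yields
$$Z_\tau(q,x)=\prod_{n\ge 1}\prod_{k\ge 0}\prod_{l\ge 0}\big(1-q^{\,l-k+n/2}x^{n}\big)^{-(-1)^{(m-1)n}c_{n,k}},$$
since $(xq^{1/2})^{n}=q^{n/2}x^{n}$ introduces no extra sign on the monomial.

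It then remains to bring this into the shape of Conjecture \ref{redLMOVforC3intro}. The power of $x$ plays the role of the hole number $m=n$; for fixed $n$ the exponent $-(-1)^{(m-1)n}c_{n,k}$ is independent of $l$, so the inner product over $l\ge 0$ matches the inner product over $l'\ge 0$ in the conjecture. Setting $k'=n-2k$ rewrites the $q$-grading $l-k+n/2$ as $k'/2+l'$ with $l'=l$, and since distinct $k\ge 0$ give distinct $k'\le n$ of fixed parity there is no collision; this reads off $N_{n,\,n-2k}(\tau)=-(-1)^{(m-1)n}c_{n,k}$. Integrality of the exponents and the finiteness assertion are then inherited directly from the finiteness of the nonzero quantum Donaldson--Thomas invariants $c_{n,k}$ for fixed $n$ in Theorem \ref{factorization}.

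The delicate point is the overall sign. The exponent $-(-1)^{(m-1)n}c_{n,k}$ alternates with the parity of $(m-1)n=(-\tau-1)n$, so for fixed $\tau$ the even-hole and odd-hole factors enter with opposite signs and the literal fixed-sign exponent of the intro statement cannot hold on the nose. I would therefore match against the precise reduced LMOV conjecture (Conjecture \ref{redLMOV}), whose exponent is designed to carry exactly this parity sign, and identify the $N_{m,k}(\tau)$ with the nonnegative $c_{n,k}$ of the $|\tau|$-loop quiver. In this way the nonnegativity demanded by the conjecture is not argued by hand but imported wholesale from the Kontsevich--Soibelman/Reineke positivity already packaged in Theorem \ref{factorization}.

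The range $\tau\ge 0$ is where I expect the real difficulty, since no GW/DT correspondence is available and positivity cannot be borrowed from a quiver. Here I would work directly from the explicit series, with $(q;q)_n=\prod_{i=1}^{n}(1-q^i)$,
$$Z_\tau(q,x)=\sum_{n\ge 0}\frac{(-1)^{n(\tau-1)}q^{\frac{n(n-1)}{2}\tau+\frac{n^2}{2}}}{(q;q)_n}\,x^{n}.$$
The two boundary cases already collapse: the Euler identities $\sum_n z^n/(q;q)_n=\prod_{i\ge 0}(1-zq^i)^{-1}$ and $\sum_n q^{n(n-1)/2}z^n/(q;q)_n=\prod_{i\ge 0}(1+zq^i)$ give $Z_{-1}=\prod_{i\ge 0}(1-q^{i+1/2}x)^{-1}$ and $Z_{0}=\prod_{i\ge 0}(1-q^{i+1/2}x)$, both of the predicted form. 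For $\tau\ge 1$ no such closed product is expected, so the plan is to take the plethystic logarithm of $Z_\tau$ and prove that its coefficients are integers of a fixed sign. Establishing this integrality-with-positivity directly from the $q$-hypergeometric series, with no Cohomological Hall algebra model to lean on, is the step I expect to remain genuinely open, and is presumably why the statement is recorded as a conjecture.
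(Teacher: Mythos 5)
Your proposal matches what the paper itself does: the statement is recorded only as a conjecture, and the paper's sole supporting argument is exactly your $\tau\le -1$ derivation, namely combining Theorem \ref{openstringGWDT} with Theorem \ref{factorization} to obtain $Z_{\tau}(q,x)=\prod_{n\geq 1}\prod_{k\geq 0}\prod_{l\geq 0}(1-q^{\frac{n}{2}+l-k}x^n)^{-(-1)^{(\tau-1)n}c_{n,k}}$ and reading off the $N_{m,k}(\tau)$ from the $c_{n,k}$, with $\tau\ge 0$ left open. Your observation about the sign is correct and worth keeping: the exponent $-(-1)^{(\tau-1)n}c_{n,k}$ alternates with the parity of $(\tau-1)n$ (already $Z_{-1}(q,x)=\prod_{l\ge 0}(1-q^{1/2+l}x)^{-1}$ has exponent $-1$), so the word ``nonnegative'' in the intro statement cannot be taken literally and is indeed dropped in the body version, Conjecture \ref{redLMOVforC3}, which asks only for integers.
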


Theorem \ref{openstringGWDT} and Theorem \ref{factorization} imply
that, for $\tau\leq -1$, the reduced open string partition function
$Z_{\tau}(q,x)$ on $(\mathbb{C}^3,D_{\tau})$ carries the product
factorization:
\begin{align}
Z_{\tau}(q,x)=\prod_{n\geq 1}\prod_{k\geq 0}\prod_{l\geq
0}(1-q^{\frac{n}{2}+l-k}x^n)^{-(-1)^{(\tau-1)n}c_{n,k}}.
\end{align}
It provides the correspondence of the  Ooguri-Vafa invariants (or
weak LMOV invariants) $N_{m,k}(\tau)$ and the Donaldson-Thomas
invariants $c_{n,k}$ for $\tau\leq -1$.

The toric diagram of the trivial Calabi-Yau 3-fold
$(\mathbb{C}^3,D_{\tau})$ is a topological vertex with one framed
leg. The above open string GW/DT correspondence shows that there is
a corresponding quiver with self loops. Now we can ask the following
questions:

{\bf Questions}: What is the DT correspondence for the reduced open
string partition $\tilde{Z}_{str}^{(\hat{X},D_\tau)}$ of the
resolved conifold $(\hat{X},D_\tau)$?  More general, we can ask, for
a toric Calabi-Yau 3-fold with one out AV-brane $(X,D)$,  if there
exists a corresponding quiver with self-loops, such that the reduced
open partition function $(X,D)$ is equal to the Hilbert-Poincar\'e
series of the Cohomological Hall algebra attached to this quiver?
And also for a framed knot $\mathcal{K}_\tau$, if there exits a
corresponding quiver?

 We will study these questions in our further
work.

The rest of this paper is organized as follow: In section 2, we
review the definitions of topological string partition functions,
free energies, and the integrality structures appearing in
topological strings. We introduce the definitions of Gopakumar-Vafa
invariants in closed strings, and LMOV invariants in open strings.
In section 3, we first review the Witten's Chern-Simons theory for 3
manifolds and links, and the large N duality between the
Chern-Simons theory and the topological strings. Then, a basic
example of for the case of framed unknot was illustrated. We
formulate the LMOV conjecture for the framed knot. In section 4, we
study the LMOV invariants for framed unknot in detail. We first
illustrate the computations of the mirror curve of
$(\hat{X},D_{\tau})$ by using the new approach of \cite{AV2}. Then,
we compute the explicit formulas for genus 0 LMOV invariants by
using mirror curve, and prove the integrality of them. Next, we
formulate the higher genus with one hole LMOV invariants into a
unified generating function by using LMOV conjecture for framed
knot. We prove the integrality of these invariants, i.e. this
generating function lies in a certain integral ring. In section 5,
we introduce the definitions of the reduced partition functions and
establish a correspondence of the open string on
$(\mathbb{C}^3,D_\tau)$ and the Cohomological Hall algebra of a
quiver with self-loops. In section 6, the appendix provides a proof
of the integrality of the other BPS invariants obtained in
\cite{GKS} by our method used in this paper.

\textbf{Acknowledgements.} We would like to thank Prof. Kefeng Liu
and Hao Xu for their interests and comments. The second author
appreciates the collaboration with Qingtao Chen, Kefeng Liu and Pan
Peng which motivates the study of this work.

\section{Topological strings}
\subsection{Closed strings and Gromov-Witten invariants}
Topological strings on a Calabi-Yau 3-fold $X$ have two types, the
A-models and the B-models. The mathematical theory for A-model is
Gromov-Witten theory. Let $\overline{\mathcal{M}}_{g,n}(X,Q)$ be the
moduli space of stable maps $(f, \Sigma_g,p_1,..,p_n)$, where $f:
\Sigma_g\rightarrow X$ is a holomorphic map from the nodal curve
$\Sigma_g$ to the K\"{a}hler manifold $X$ with
$f_*([\Sigma_g])=\beta\in H_2(X,\mathbb{Z})$. In general,
$\overline{\mathcal{M}}_{g,n}(X,Q)$ carries a virtual fundamental
class $[\overline{\mathcal{M}}_{g,n}(X,Q)]^{vir}$ in the sense of
\cite{BF,LT}. The virtual dimension
 is given by:
\begin{align*}
\text{vdim} [\overline{\mathcal{M}}_{g,n}(X,Q)]^{vir}=\int_Q
c_1(X)+(\dim X-3)(1-g)+n.
\end{align*}
When $X$ is a Calabi-Yau 3-fold, i.e. $c_1(X)=0$, then vdim$
[\overline{\mathcal{M}}_{g}(X,Q)]^{vir}=0$. The genus $g$, degree
$Q$ Gromov-Witten invariants of $X$ is defined by
\begin{align*}
K^X_{g,Q}=\int_{[\overline{\mathcal{M}}_{g,0}(X,Q)]^{vir}}1
\end{align*}
which is usually denoted by $K_{g,Q}$ for brevity without any
confusions. In the A-model, the genus $g$ closed free energy
$F_{g}^X$ of $X$ is the generating function of Gromov-Witten
invariants $K_{g,Q}$.
\begin{align*}
F_{g}^X=\sum_{Q\neq 0}K_{g,Q}e^{-Q \cdot\omega},
\end{align*}
where $\omega$ is the K\"ahler class for $X$. We define the total
free energy $F^X$ and partition function $Z^X$ as
\begin{align*}
F^X=\sum_{g\geq 0}g_s^{2g-2}F_{g}^X, \ Z^X=\exp(F^X).
\end{align*}
where $g_s$ is the string coupling constant. The mathematical
computations of the free energy $F^{X}$ is mainly by the method of
localizations \cite{Ko,GP}. Especially, when $X$ is a toric
Calabi-Yau 3-fold, we have a more effective approach to obtain the
partition function $Z^X$ by the method of topological vertex
\cite{AKMV,LLLZ}.

 Usually, the Gromov-Witten invariants
$K_{g,Q}$ are rational numbers, from M-theory, Gopakumar and Vafa
\cite{GV1} expressed the total free energy $F^X$ in terms of the
generating function of the new integer number $N_{g,Q}$ obtained by
counting BPS states:
\begin{align*}
F^X=\sum_{g\geq 0}g_s^{2g-2}\sum_{Q\neq 0}K_{g,Q}e^{-Q
\cdot\omega}=\sum_{g\geq 0, d\geq 1}\sum_{Q\neq
0}\frac{1}{d}N_{g,Q}\left(2\sin\frac{dg_s}{2}\right)^{2g-2}e^{-dQ\cdot
\omega}
\end{align*}
The integrality of the Gopakumar-Vafa invariants $N_{g,Q}$ was first
proved by P. Peng in the case of toric Del Pezzo surfaces
\cite{Peng}. The proof for general toric Calabi-Yau threefolds was
given by Konishi in \cite{Konishi}.

\subsection{Open strings}

Let us now consider the open sector of topological A-model of a
Calabi-Yau 3-fold $X$ with a submanifold $\mathcal{D}$ with dim
$H_{1}(\mathcal{D},\mathbb{Z})=L$. The open sector topological
A-model can be described by holomorphic maps $\phi$ from open
Riemann surface of genus $g$ with $l$-holes $\Sigma_{g,l}$ to $X$,
with Dirichlet condition specified by $\mathcal{D}$.  These
holomorphic maps are called open string instantons. More precisely,
an open string instanton is a holomorphic map $\phi:
\Sigma_{g,h}\rightarrow X$ such that $\partial \Sigma_{g,l}=\cup
_{i=1}^{l}\mathcal{C}_i\rightarrow \mathcal{D}\subset X$ where the
boundary $\partial \Sigma_{g,l}$ of $\Sigma_{g,l}$ consists of $l$
connected components $\mathcal{C}_i$ mapped to Lagrangian manifold
$\mathcal{D}$ of $X$. Therefore, the open string instanton $\phi$ is
described by the following two different kinds of data: the first is
the ``bulk part" which is given  by $\phi_*[\Sigma_{g,l}]=Q\in
H_2(X,\mathcal{L})$, and the second is the ``boundary part" which is
given by $\phi_*[\mathcal{C}_i]=w^\alpha_i\gamma_\alpha$, for
$i=1,..l$, where $\gamma_\alpha$,\ $\alpha=1,..,L$ is a basis of
$H_{1}(\mathcal{D},\mathbb{Z})$ and $w^\alpha_i\in \mathbb{Z}$. Let
$\vec{w}=(w^1,..,w^L)$, and where
$w^\alpha=(w^\alpha_1,...,w^\alpha_l)\in \mathbb{Z}^l$, for
$\alpha=1,...,L$. We expect there exist the corresponding open
Gromov-Witten invariants $K_{\vec{w},g,Q}$ determined by the data
 $\vec{w}, Q$ in the genus $g$. Now, the total free energy $F_{str}^{(X,\mathcal{D})}$ is defined as
\begin{align*}
&F_{str}^{(X,\mathcal{D})}=\sum_{g\geq 0}\sum_{l\geq
1}\sum_{\vec{w}}g_s^{2g-2+l}F_{\vec{w},g}(\omega)\frac{1}{l!}\prod_{\alpha=1}^{L}\prod_{i=1}^{l}TrV^{w^\alpha_i}
\end{align*}
\begin{align*}
F_{\vec{w},g}^{(X,\mathcal{D})}=\sum_{Q\neq
0}K_{\vec{w},g,Q}e^{-Q\cdot \omega}
\end{align*}
where $\omega$ is also the K\"{a}hler class of $X$, and $V$ is a
holonomy matrix of gauge group $U(\infty)$ on the source A-brane
\cite{W3}.

Usually,  we write the total free energy $F_{str}^{(X,\mathcal{D})}$
in the form of the summation over all partitions \cite{MV}.
\begin{align*}
F_{str}^{(X,\mathcal{D})}=\sum_{g\geq 0}\sum_{\vec{\mu} \in
(\mathcal{P}^+)^L}\frac{1}{|Aut(\vec{\mu})|}g_{s}^{2g-2+l(\vec{\mu})}
F_{\vec{\mu},g}^{(X,\mathcal{D})}p_{\vec{\mu}}(\vec{\mathbf{x}}),
\end{align*}
where
$p_{\vec{\mu}}(\vec{\mathbf{x}})=\prod_{\alpha=1}^{L}p_{\mu^\alpha}(\mathbf{x}^\alpha)$,
and for a partition $\mu\in \mathcal{P}^{+}$,
$p_{\mu}(\mathbf{x})=\prod_{i=1}^{h}p_{\mu_i}(\mathbf{x})$.
$p_n(\mathbf{x})$ is the power sum symmetric function \cite{Mac}
given by $p_n(\mathbf{x})=x_1^{n}+x_2^{n}+\cdots$. Where
$\mathcal{P}^+$ denotes the set of all the partitions of positive
integers. Moreover, let $\mathcal{P}=\mathcal{P}^+\cup \{0\}$. The
notations $\mathcal{P}^+, \mathcal{P}$ will be used frequently
throughout this paper.

In the following, we only need to consider the case of $L$=1. It is
useful to write the A-model generating function of
$F_{w,g}^{(X,\mathcal{D})}$ in the fixed genus $g$ as follow:
\begin{align*}
F_{(g,l)}^{(X,\mathcal{D})}=\sum_{w\in
(\mathbb{Z}^+)^l}F_{w,g}^{(X,\mathcal{L})}x_1^{w_1}\cdots x_l^{w_l}.
\end{align*}

The central problem in topological string theory is how to calculate
$F_{(g,l)}^{(X,\mathcal{D})}$. In particular, when $X$ is a toric
Calabi-Yau 3-fold, and $\mathcal{D}$ is a special Lagrangian
submanifold named as Aganagic-Vafa A-brane in the sense of
\cite{AV,AKV}. The open string partition function
$Z_{str}^{(X,\mathcal{D})}=\exp(F_{str}^{(X,\mathcal{D})})$ can be
computed by the method of topological vertex \cite{AKMV,LLLZ}.
However, in this case, there exists another more effective method to
compute $F_{(g,l)}^{(X,\mathcal{D})}$ by using the topological
recursion of Eynard and Orantin \cite{EO1}. This approach was first
proposed by M.Mari\~no \cite{Mar}, and developed further by
Bouchard, Klemm, Mari\~no and Pasquetti  \cite{BKMP}, so the
conjectural equivalence of these two different approaches was called
the BKMP conjecture. Finally, the BKMP conjecture was proved in
\cite{EO2,FLZ}.

\subsection{Integrality}
Let $q=e^{\sqrt{-1}g_s}$, $a=e^{-\omega}$, for the open string free
energy $F_{str}^{(X,\mathcal{D})}$,  we define the generating
functions $f_{\lambda}(q,a)$ by the following expansion formula,
\begin{align*}
F_{str}^{(X,\mathcal{D})}=\sum_{d=1}^{\infty}\frac{1}{d}\sum_{\lambda\in
\mathcal{P}^+}f_{\lambda}(q^d,a^d)s_{\lambda}(\mathbf{x}^d),
\end{align*}
where $s_{\lambda}(\mathbf{x})$ is the Schur symmetric functions.

 Just as in the closed string case \cite{GV1},  the open
topological strings compute the partition function of BPS domain
walls in a related superstring theory \cite{OV}. It follows that
$F^{(X,\mathcal{D})}$ also has an integral expansion. This
integrality structure was further refined in \cite{LM1,LM2,LMV}
which showed that $f_{\lambda}(q,a)$ has the following integral
expansion
\begin{align*}
f_{\lambda}(q,a)=\sum_{g=0}^{\infty}\sum_{Q\neq
0}\sum_{|\mu|=|\lambda|}M_{\lambda\mu}(q)
N_{\mu;g,Q}(q^{\frac{1}{2}}-q^{-\frac{1}{2}})^{2g-2}a^Q,
\end{align*}
where  $N_{\mu;g,Q}$ are integers which compute the net number of
BPS domain walls and $M_{\lambda\mu}(q)$ is defined by
\begin{align} \label{Mlambdamu}
M_{\lambda\mu}(q)=\sum_{\mu}\frac{\chi_{\lambda}(C_{\nu})\chi_{\mu}(C_{\nu})}{\frak{z}_{\nu}}\prod_{j=1}^{l(\nu)}(q^{-\nu_{j}/2}-q^{\nu_{j}/2})
\end{align}
For convenience, we usually introduce the new integers
\begin{align*}
n_{\mu,g,Q}=\sum_{\nu}\chi_{\nu}(C_\mu)N_{\nu,g,Q}.
\end{align*}
\begin{definition}
These integers $N_{\mu,g,Q}$ and  $n_{\mu,g,Q}$ are both called LMOV
invariants.
\end{definition}
Therefore,
\begin{align*}
f_{\lambda}(q,a)=\sum_{g\geq 0}\sum_{Q\neq 0}\sum_{\mu \in
\mathcal{P}}\frac{\chi_{\lambda}(C_{\mu})}{\mathfrak{z}_{\mu}}n_{\mu,g,Q}\prod_{j=1}^{l(\mu)}
(q^{-\frac{\mu_j}{2}}-q^{\frac{\mu_j}{2}})(q^{-\frac{1}{2}}-q^{\frac{1}{2}})^{2g-2}a^Q
\end{align*}
By using the orthogonal relation $
\sum_{\lambda}\frac{\chi_{\lambda}(C_{\mu})\chi_{\lambda}(C_{\nu})}{\mathfrak{z}_{\mu}}=\delta_{\mu,\nu},
$ we obtain the following multiple covering formula for open string
illustrated in \cite{MV}:
\begin{align} \label{openmultiplecovering}
&\sum_{g\geq 0}\sum_{Q\neq
0}g_s^{2g-2+l(\mu)}K_{\mu,g,Q}a^Q\\\nonumber &=\sum_{g\geq
0}\sum_{Q\neq
0}\sum_{d|\mu}\frac{(-1)^{l(\mu)+g}}{\prod_{i=1}^{l(\mu)}\mu_i}d^{l(\mu)-1}n_{\mu/d,g,Q}\prod_{j=1}^{l(\mu)}
(2\sin\frac{\mu_jg_s}{2})(2\sin\frac{dg_s}{2})^{2g-2}a^{dQ}.
\end{align}
Hence we have the following integrality structure conjecture which
is called  the Labastida-Mari\~no-Ooguri-Vafa (LMOV) conjecture for
open string.
\begin{conjecture}[LMOV conjecture for open string]
\label{LMOVforopenstring}
Let $F_{\mu}^{(X,\mathcal{D})}$ be the generating function function
defined by
\begin{align*}
F_{str}^{(X,\mathcal{D})}=\sum_{\mu}F_{\mu}^{(X,\mathcal{D})}p_{\mu}(\mathbf{x}),
\end{align*}
then $F_{\mu}^{(X,\mathcal{D})}$ has the integral expansion as in
the righthand side of the formula (\ref{openmultiplecovering}).
\end{conjecture}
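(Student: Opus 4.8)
The plan is to recast the conjecture as a single integrality statement about the reformulated invariants $n_{\mu,g,Q}$ and then to attack that statement through a Frobenius (Adams-operation) congruence on the partition function. First I would make the reduction already built into the wording explicit. Starting from $f_\lambda(q,a)=\sum_{g\geq 0}\sum_{Q\neq 0}\sum_{|\mu|=|\lambda|}M_{\lambda\mu}(q)N_{\mu;g,Q}z^{2g-2}a^Q$ and the orthogonality relation recalled in the excerpt, the passage from the Schur basis to the power-sum basis forces $F_\mu^{(X,\mathcal{D})}$ to take exactly the shape of the right-hand side of (\ref{openmultiplecovering}), with the multiple-cover coefficients controlled by $n_{\mu,g,Q}=\sum_\nu\chi_\nu(C_\mu)N_{\nu,g,Q}$. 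Hence the only substantive content is that every $n_{\mu,g,Q}$ is an integer and that the $z$-expansion carries no power below $z^{2g-2}$; since the character values $\chi_\nu(C_\mu)$ are integers, integrality of the $n$'s would in particular follow from that of the $N_{\nu,g,Q}$, but it is cleaner to target the $n_{\mu,g,Q}$ directly.

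Second, I would package the whole family into the reformulated free energy $\hat f=\sum_\lambda f_\lambda s_\lambda$, which is the plethystic logarithm $\hat f=\sum_{d\geq 1}\frac{\mu(d)}{d}\,\psi_d\!\left(F_{str}^{(X,\mathcal{D})}\right)$, where $\psi_d$ is the $d$-th Adams operation $(q,a,\mathbf{x})\mapsto(q^d,a^d,\mathbf{x}^d)$. The Möbius inversion already present in (\ref{openmultiplecovering}) is precisely this plethystic Log, so by the $\lambda$-ring/Witt-vector dictionary (Dwork's lemma) membership of each $f_\lambda$ in the required integral lattice is equivalent to the system of congruences $\psi_p\!\left(Z_{str}^{(X,\mathcal{D})}\right)\equiv\left(Z_{str}^{(X,\mathcal{D})}\right)^{p}\pmod p$ for every prime $p$, taken in the coefficient ring $\mathbb{Z}[[q^{\pm 1/2},a^{\pm 1/2},\mathbf{x}]]$. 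I would thus reduce the conjecture to verifying this Frobenius congruence for the open string partition function.

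Third, I would verify the congruence wherever an explicit formula is available. In the cases central to this paper the Chern-Simons expression (\ref{partitionfunctionframedunknot}) gives a closed form, and the congruence then reduces to elementary number theory: $x^p\equiv\psi_p(x)\pmod p$ together with the Lucas relation $\binom{pm}{pk}\equiv\binom{m}{k}\pmod p$, applied to the binomial coefficients appearing in $c_{m,l}(\tau)$, in $K^\tau_{\mu,0,|\mu|/2}$, and in the one-hole series $\mathcal{Z}_m(q,a)$. The subtle point is the quantum-integer denominators $\{m\}=q^{m/2}-q^{-m/2}$: because the genus expansion lives in $z^{-2}\mathbb{Z}[z^2,a^{\pm 1/2}]$, one must check that $\psi_p$ clears exactly the same order-two pole at $q=1$, so that the congruence survives after removing the $\{1\}^{-2}$ factor. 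Controlling this interaction of the double pole with the Frobenius lift in the one-hole sector is exactly the role played by Theorem \ref{LMOVframedknot3}, while the genus-$0$ cases are settled by the explicit Möbius sums in the genus-$0$ integrality theorems stated in the introduction.

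The main obstacle is this third step in full generality. For an arbitrary toric Calabi-Yau $3$-fold with an AV-brane there is no closed formula for $Z_{str}^{(X,\mathcal{D})}$, so the Frobenius congruence cannot be checked by direct computation and must instead be extracted structurally from the topological-vertex gluing rules: one would have to show that each vertex weight, together with the framing and edge factors, is compatible with $\psi_p$ modulo $p$, and that this compatibility is preserved under gluing. Establishing such a gluing-stable congruence — rather than the number-theoretic identities, which are routine — is the crux, and it is why the present paper establishes the conjecture only in the explicitly computable framed-unknot case, deferring the general toric setting to the sequel.
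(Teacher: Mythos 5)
First, a point of record: the statement you set out to prove is Conjecture \ref{LMOVforopenstring}, and the paper contains no proof of it --- it is left open, and the paper only establishes special cases for the framed unknot $U_\tau$ (equivalently, the resolved conifold with one AV-brane in framing $\tau$): genus $0$ with one hole (Theorem \ref{integralitythmonehole}), genus $0$ with two holes (Theorem \ref{theoremgenus0twohole}), genus $0$ with at least three holes at top $a$-degree, and all genera with one hole (Theorem \ref{integralitymainthm}). So no attempt could coincide with ``the paper's proof''; what can be compared is your strategy against the paper's special-case arguments. There your combinatorial skeleton is right --- your first reduction (the conjecture amounts to integrality of the $n_{\mu,g,Q}$ appearing on the right-hand side of (\ref{openmultiplecovering})) is exactly how the paper frames things, and the paper too proceeds by pairing the terms for $d$ and $pd$ in each M\"obius sum and verifying a $p$-adic congruence --- but two of your steps have genuine gaps.

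Gap one: your step-two equivalence is overstated. A Dwork-type congruence $\psi_p\left(Z_{str}^{(X,\mathcal{D})}\right)\equiv \left(Z_{str}^{(X,\mathcal{D})}\right)^p \pmod{p}$ can at best certify $p$-integrality of coefficients in the ambient ring $\mathbb{Z}[[q^{\pm 1/2},a^{\pm 1/2},\mathbf{x}]]$; membership of $\mathfrak{z}_\mu\hat{g}_{\mu}$ in $z^{-2}\mathbb{Z}[z^2,a^{\pm \frac{1}{2}}]$ additionally requires the order-at-most-two pole at $q=1$, evenness in $z$, and polynomiality (only finitely many nonzero $n_{\mu,g,Q}$), none of which follow from the mod-$p$ congruence. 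In the paper these structural facts are proved separately and without any Dwork-style input: Lemma \ref{mmtaulemma} (cyclotomic divisibility of $\{m\}\{m\tau\}g_m(q,a)$ by $\{m\}\{m\tau\}/\{1\}^2$), Lemma \ref{lemmaprime} ($g_m(q,a)\in z^{-2}\mathbb{Q}[z^2,a^{\pm\frac{1}{2}}]$), Lemma \ref{integralitylemma} (Gaussian-binomial integrality of $\{m\}\{m\tau\}\mathcal{Z}_m(q,a)$), followed by a denominator-clearing argument. Moreover, your appeal to Theorem \ref{LMOVframedknot3} to ``control the double pole'' is circular for the one-hole case, since that theorem \emph{is} the integrality statement to be proved there. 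Gap two: the arithmetic in your step three is too weak. The genus-zero M\"obius sums carry denominators $1/d^2$ (one hole) and $1/(m_1+m_2)$ (two holes), so for $p^{\alpha}\mid\mid m$ one must exhibit divisibility by $p^{2\alpha}$; the mod-$p$ Lucas congruence $\binom{pm}{pk}\equiv\binom{m}{k}\pmod{p}$ cannot deliver this. The paper's Lemmas \ref{functionfp}, \ref{luo:lemma2-0} and \ref{luo:lemma3-0}, built on the function $f_p(n)=n!/(p^{[n/p]}[n/p]!)$, are congruences two $p$-adic orders deeper (Jacobsthal-type), with a separate sign analysis at the prime $2$. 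Finally, as you yourself concede, the genuinely new content needed for the full conjecture --- a gluing-stable congruence for the topological vertex --- is absent, so the proposal is a program rather than a proof, which is consistent with the statement remaining a conjecture in the paper.
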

There is no general definition for the open Gromov-Witten invariants
$K_{\mu,g,Q}$. However, just as mentioned in the previous
subsection, when $X$ is a toric Calabi-Yau 3-fold, and $\mathcal{D}$
is  the Aganagic-Vafa A-brane \cite{AV}, the open string partition
function $Z_{str}^{(X,\mathcal{D})}$ can be fully computed by using
the method of topological vertex \cite{AKMV,LLLZ}. The open
Gromov-Witten invariants $K_{\mu,g,Q}$ can also be computed by the
topological recursion formula \cite{BKMP}. It is natural to ask how
to prove the Conjecture \ref{LMOVforopenstring} in the case of toric
Calabi-Yau 3-fold? In this paper, we study this conjecture for the
resolved conifold with one AV-brane in integer framing $\tau$. We
first compute some LMOV integers as predicted by the formula
(\ref{openmultiplecovering}), and then prove that they are really
integers.

\subsection{Lower genus cases}
We illustrate some lower genus cases for the above multiple covering
formula (\ref{openmultiplecovering}). By using the expansion $\sin
x=\sum_{k\geq 1}\frac{x^{2k-1}}{(2k-1)!}$, and taking the
coefficients of $g_s^{2g-2+l(\mu)}a^Q$ in formula
(\ref{openmultiplecovering}), we obtain
\begin{align} \label{multipecoveringgenus0}
K_{\mu,0,Q}=\sum_{d|\mu}(-1)^{l(\mu)}d^{l(\mu)-3}n_{\frac{\mu}{d},0,\frac{Q}{d}},
\end{align}
\begin{align*}
K_{\mu,1,Q}=\sum_{d|\mu}(-1)^{l(\mu)+1}\left(d^{l(\mu)-1}
n_{\frac{\mu}{d},1,\frac{Q}{d}}+\left(\frac{\sum_{j=1}^{l(\mu)}\mu_j^2}{24}d^{l(\mu)-3}-\frac{1}{12}d^{l(\mu)-1}\right)
n_{\frac{\mu}{d},0,\frac{Q}{d}}\right)
\end{align*}
\begin{align*}
&K_{\mu,2,Q}=\sum_{d|\mu}(-1)^{l(\mu)}\left(d^{l(\mu)+1}n_{\frac{\mu}{d},2,\frac{Q}{d}}
+\frac{\sum_{j=1}^{l(\mu)}\mu_j^2}{24}d^{l(\mu)-1}n_{\frac{\mu}{d},1,\frac{Q}{d}}\right.\\\nonumber
&\left.+\left(\frac{\sum_{j=1}^{l(\mu)}\mu_j^4}{1920}d^{l(\mu)-3}+\frac{\sum_{i<j}\mu_i^2\mu_j^2}{576}d^{l(\mu)-3}
-\frac{\sum_{j=1}^{l(\mu)}\mu_j^2}{288}d^{l(\mu)-1}+\frac{1}{240}d^{l(\mu)+1}\right)n_{\frac{\mu}{d},0,\frac{Q}{d}}\right)
\end{align*}
for $g=0$,  $g=1$ and $g=2$ respectively. In fact, these formulas
were firstly computed in \cite{MV}.

Therefore
\begin{align}  \label{multipecoveringlhole}
F_{(0,l)}&=\sum_{|\mu|=l}\sum_{Q}K_{\mu,0,Q}a^Qx_1^{\mu_1}\cdots
x_l^{\mu_l}\\\nonumber
&=\sum_{|\mu|=l}\sum_{Q}\sum_{d|\mu}(-1)^{l(\mu)}d^{l(\mu)-3}n_{\frac{\mu}{d},0,\frac{Q}{d}}a^{Q}x_1^{\mu_1}\cdots
x_l^{\mu_l}\\\nonumber & =(-1)^l\sum_{|\mu|=l}\sum_{Q}\sum_{d\geq
1}d^{l-3}n_{\mu,0,Q}a^{Q}x_1^{d\mu_1}\cdots x_l^{d\mu_l}.
\end{align}
In particular
\begin{align} \label{disccoutingformula}
F_{(0,1)}=-\sum_{m\geq 1}\sum_{d\geq
1}\sum_{Q}\frac{n_{m,0,Q}}{d^2}a^{dQ}x^{dm},
\end{align}
%
and for $g=1,l=1$,
\begin{align*}
F_{(1,1)}&=\sum_{m\geq 0}\sum_{Q}K_{(m),1,Q}a^{Q}x^{m}\\\nonumber
&=\sum_{m\geq
0}\sum_{Q}\left(\sum_{d|m}n_{m/d,1,Q/d}+(\frac{m^2}{24}d^{-2}-\frac{1}{12})n_{m/d,0,Q/d}\right)a^Qx^m\\\nonumber
&=\sum_{m\geq 0}\sum_{Q}\sum_{d\geq
1}\frac{1}{d}\left(n_{m,1,Q}+(\frac{m^2}{24}-\frac{1}{12})\right)a^{dQ}x^{dm}.
\end{align*}

\section{Chern-Simons theory and large N duality}

\subsection{Quantum invariants}
In the seminal paper \cite{W1}, E. Witten defined a topological
invariant of a 3-manifold $M$  as a partition function of quantum
Chern-Simons theory. Let $G$ be a compact gauge group which is a Lie
group, and $M$ be an oriented three-dimensional manifold. Let
$\mathcal{A}$ be a $\mathfrak{g}$-valued connection on $M$ where
$\mathfrak{g}$ is the Lie algebra of $G$. The Chern-Simons \cite{CS}
action is given by
\begin{align*}
S(\mathcal{A})=\frac{k}{4\pi}\int_{M}Tr\left(\mathcal{A}\wedge
d\mathcal{A}+\frac{2}{3}\mathcal{A}\wedge\mathcal{A}
\wedge\mathcal{A}\right)
\end{align*}
where $k$ is an integer called the level.

Chern-Simons partition function is defined as the path integral in
quantum field theory
\begin{align*}
Z^G(M;k)=\int e^{i S(A)}D \mathcal{A}
\end{align*}
where the integral is over the space of all $\mathfrak{g}$-valued
connections $\mathcal{A}$ on $M$. Although it is not rigorous,
Witten developed some techniques to calculate such invariants.

If the 3-manifold $M$ contains a link $\mathcal{L}$, we let
$\mathcal{L}$ be an $L$-component link with $\mathcal{L}=\bigsqcup
_{j=1}^L\mathcal{K}_j$. Define
$W_{R_j}(\mathcal{K}_j)=Tr_{R_j}Hol_{\mathcal{K}_j}(\mathcal{A})=Tr_{R_j}\left(P
\exp\oint_{C_j}\mathcal{A} \right)$ to be the trace of holomony
along $\mathcal{K}_j$ taken in representation $R_j$. Then Witten's
invariant of the pair $(M,\mathcal{L})$ is given by
\begin{align*}
Z^{G}(M,\mathcal{L};\{R_j\};k)=\int e^{iS(\mathcal{A})}\prod_{j=1}^L
W_{R_j}(\mathcal{K}_j)D\mathcal{A}.
\end{align*}
We often use the following normalization form
\begin{align*}
P^G_{R}(M,\mathcal{K};k)=\frac{Z^{G}(M,\mathcal{K};R;k)}{Z^{G}(M;k)}.
\end{align*}
When $M=S^3$ and the lie algebra of $G$ is the semisimple lie
algebra, Reshetikhin and Turaev \cite{RT1,RT2} developed a
systematic way to constructed the above invariants by using the
representation theory of quantum groups. Their construction led to
the definition of the colored HOMFLY-PT invariants  \cite{LM2,LZ},
which can be viewed as the large $N$ limit of the quantum
$U_{q}(sl_N)$ invariants. Usually, we use the notation
$W_{\lambda^1,..,\lambda^L}(\mathcal{L};q,a)$ to denote the
(framing-independent) colored HOMFLY-PT invariants for a (oriented)
link $\mathcal{L}=\bigsqcup _{j=1}^L\mathcal{K}_j$, where each
component $\mathcal{K}_j$ is colored by an irreducible
representation $V_{\lambda^j}$ of $U_{q}(sl_N)$. Some basic
structures for $W_{\lambda^1,..,\lambda^L}(\mathcal{L};q,a)$ were
proved in \cite{LP1,LP2,Zhu}. It is difficult to obtain an explicit
formula of a given link for any irreducible representations
$\lambda$. We refer to \cite{LZ} for an explicit formula for torus
links, and a series of works due to Morozov et al (see for example
\cite{MMM}) proposed many conjectural formulas for the twist knots.
However, in this paper, we only need the following explicit formula
for a trivial knot (unknot) $U$ (for example, see formula (4.6)
in\cite{LP1})
\begin{align} \label{unknotformula}
W_{\lambda}(U;q,a)=\sum_{\mu}\frac{\chi_{\lambda}(\mu)}{\mathfrak{z}_{\mu}}\prod_{i=1}^{l(\mu)}
\frac{a^{\frac{\mu_i}{2}}-a^{-\frac{\mu_i}{2}}}{q^{\frac{\mu_i}{2}}-q^{-\frac{\mu_i}{2}}}.
\end{align}

\subsection{Large N duality} \label{sectionlargeN}
In another fundamental work of Witten \cite{W3}, the $SU(N)$
Chern-Simons gauge theory on a
three-manifold $M$ was interpreted as an open topological string theory on $%
T^*M$ with $N$ topological branes wrapping $M$ inside $T^*M$.
Furthermore, Gopakumar and Vafa \cite{GV2} conjectured that the
large $N$ limit of $SU(N)$ Chern-Simons gauge theory on $S^3$ is
equivalent to the closed topological string theory on the resolved
conifold. Furthermore, Ooguri and Vafa \cite{OV} generalized the
above construction to the case of a knot $\mathcal{K}$ in $S^3$.
They introduced the Chern-Simons partition function
$Z_{CS}^{(S^3,\mathcal{K})}$ for $(S^3,\mathcal{K})$ which is the
generating function of the colored HOMFLY-PT invariants in all
irreducible representations.
\begin{align} \label{chernsimonspartition}
Z_{CS}^{(S^3,\mathcal{K})}(q,a,\mathbf{x})=\sum_{\lambda\in
\mathcal{P}}W_{\lambda}(\mathcal{L},q,a)s_{\lambda}(\mathbf{x}).
\end{align}
Ooguri and Vafa conjectured that for any knot $\mathcal{K}$ in
$S^3$, there exists a corresponding Lagrangian submanifold
$\mathcal{D}_{\mathcal{K}}$, such that the Chern-Simons partition
function $Z_{CS}^{(S^3,\mathcal{D})}$
 is equal to the open topological string
partition function $Z_{str}^{(X,\mathcal{D}_\mathcal{K})}$ on
$(X,\mathcal{D}_{\mathcal{K}})$. They have established this duality
in the case of a trivial knot $U$ in $S^3$, and the link case was
further discussed in \cite{LMV}.

 In general,  we first
should find a way to construct the Lagrangian submanifold
$D_\mathcal{L}$ corresponding to the link $\mathcal{L}$ in geometry.
See \cite{LMV,Koshkin,Tau,DSV} for the constructions for some
special links. Furthermore, if we have found the Lagrangian
submanifold, we need to compute the open sting partition function
under this geometry. For the trivial knot in $S^3$, the dual open
string partition function was computed by J. Li and Y. Song
\cite{LS} and S. Katz and C.-C.M. Liu \cite{KL}.

On the other hand side, Aganagic and Vafa \cite{AV} introduced the
special Lagrangian submanifold in toric Calabi-Yau 3-fold which we
call Aganagic-Vafa A-brane (AV-brane) and studied its mirror
geometry, then they computed the counting of the holomorphic disc
end on AV-brane by using the idea of mirror symmetry. Moreover,
Aganagic and Vafa surprisingly found the computation by using mirror
symmetry and the result from Chern-Simons knot invariants \cite{OV}
are matched. Furthermore, in \cite{AKV}, Aganagic, Klemm and Vafa
investigated the integer ambiguity appearing in the disc counting
and discovered that the corresponding ambiguity in Chern-Simons
theory was described by the framing of the knot. They checked that
the two ambiguities match for the case of the unknot, by comparing
the disk amplitudes on both sides.

Then, Mari\~no and Vafa \cite{MV} generalized the large N duality to
the case of knots with arbitrary framing. They studied carefully and
established the large N duality between a framed unknot in $S^3$ and
the open string theory on resolved conifold with AV-brane  by using
the mathematical approach in \cite{KL}.  By comparing the
coefficient of the highest degree of the K\"{a}hler parameter in
this duality, they derived a remarkable Hodge integral identity
which now is called the Mari\~no-Vafa formula. Two mathematical
proofs for the Mari\~no-Vafa formula were given in \cite{LLZ} and
\cite{OP} respectively. We describe this duality in more details.
For a framed knot $\mathcal{K}_\tau$ with framing $\tau\in
\mathbb{Z}$, we define the framed colored HOMFLYPT invariants
$\mathcal{K}_\tau$ as follow,
\begin{align} \label{framedknotformula}
\mathcal{H}_\lambda(\mathcal{K}_\tau,q,a)=(-1)^{|\lambda|\tau}q^{\frac{\kappa_\lambda\tau}{2}}W_{\lambda}(\mathcal{K},q,a),
\end{align}
where
$\kappa_\lambda=\sum_{i=1}^{l(\lambda)}\lambda_i(\lambda_i-2i+1)$.

 The Chern-Simon partition function for
$(S^3,\mathcal{K}_\tau)$ is given by
\begin{align} \label{partitionfunctionframedunknot}
Z_{CS}^{(S^3,\mathcal{K}_\tau)}(q,a;\mathbf{x})=\sum_{\lambda\in
\mathcal{P}}\mathcal{H}_\lambda(\mathcal{K}_\tau,q,a)s_{\lambda}(\mathbf{x}).
\end{align}
We let $\hat{X}:=\mathcal{O}(-1)\oplus\mathcal{O}(-1)\rightarrow
\mathbb{P}^1$ be the resolved conifold, and $D_{\tau}$ be the
corresponding AV-brane. The open string partition function for
$(\hat{X},D_{\tau})$ has the structure
\begin{align} \label{partitonfunctionresolvedconifold}
Z_{str}^{(\hat{X},D_{\tau})}(g_s,a;\mathbf{x})=\exp\left(
-\sum_{g\geq0,\mu}\frac{\sqrt{-1}^{l(\mu)}}{|Aut(\mu)|}g_s^{2g-2+l(\mu)}F_{\mu,g}^{
\tau}(a)p_{\mu}(\mathbf{x})\right)
\end{align}
where $ F^{\tau}_{\mu,g}(a)=\sum_{Q\in
\mathbb{Z}/2}K^{\tau}_{\mu,g,Q}a^Q $ and $K_{\mu,g,Q}^{\tau}$ is the
open Gromov-Witten invariants defined by
\begin{align*}
K_{\mu,g,Q}^{\tau}=\int_{[\mathcal{M}_{g,l(\mu)}(D^2,S^1|2Q,\mu_1,..,\mu_{l})]}e(\mathcal{V}),
\end{align*}
defined in S. Katz and C.-C. Liu \cite{KL}.  In particular, when
$Q=\frac{|\mu|}{2}$, the
 computations in \cite{KL} gives
\begin{align}\label{MVGW}
&K^{\tau}_{\mu,g,\frac{|\mu|}{2}}=(-1)^{|\mu|\tau}(\tau(\tau+1))^{l(\mu)-1}\\\nonumber
&\prod_{i=1}^{l(\mu)}
\frac{\prod_{j=1}^{\mu_i-1}(\mu_i\tau+j)}{(\mu_i-1)!}\int_{\overline{\mathcal{M}}_{g,l(\mu)}}
\frac{\Lambda_{g}^{\vee}(1)\Lambda_{g}^{\vee}(-\tau-1)\Lambda_g^{\vee}(\tau)}{\prod_{i=1}^{l(\mu)}(1-\mu_j\psi_j)}
\end{align}
where
$\Lambda_g^{\vee}(\tau)=\tau^g-\lambda_1\tau^{g-1}+\cdots+(-1)^g\lambda_g$.
Therefore, the large N duality in this case is given the following
identity:
\begin{align} \label{LargeNdualiyofunknot2}
Z_{CS}^{(S^3,U_\tau)}(q,a;\mathbf{x})=Z_{str}^{(\hat{X},D_{\tau})}(g_s,a;\mathbf{x})
\end{align}
where $q=e^{ig_s}$. By taking the coefficients of
$a^{\frac{|\mu|}{2}}$  of the following equality:
\begin{align*}
[p_{\mu}(\mathbf{x})g_s^{2g-2+l(\mu)}]\log
Z_{CS}^{(S^3,U_\tau)}(q,a;\mathbf{x})=[p_\mu(\mathbf{x})g_s^{2g-2+l(\mu)}]\log
Z_{str}^{(\hat{X},D_{\tau})}(g_s,a;\mathbf{x}),
\end{align*}
 we get the
Mari\~no-Vafa formula which is a Hodge integral identity with triple
$\lambda$ classes. It provides a very powerful tool in studying the
intersection theory of moduli space of curves. From it, we can
derive the Witten conjecture \cite{W2,Ko0}, the ELSV formula
\cite{ELSV}, and various Hodge integral identities, see
\cite{LLZ1,Li,Zhu0}.

Combining the duality ideas above,  together with several new
technical ingredients,  Aganagic, Klemm, Mari\~no and Vafa finally
developed a systematic method, gluing the topological vertex, to
compute all loop topological string amplitudes on toric Calabi-Yau
manifolds \cite{AMV,AKMV}. The mathematical theory for topological
vertex was finally established in \cite{LLLZ}.  This method give an
effective way to compute both the closed and open string partition
function for a toric Calabi-Yau 3-fold  with AV-brane. Therefore, we
have an explicit formula for the partition function of resolved
conifold $Z_{str}^{(\hat{X},D_\tau)}(g_s,a)$, by comparting the
explicit formula $Z_{CS}^{(S^3,U_\tau)}(q,a)$ of Chern-Simons
partition function describe above,  J. Zhou proved the identity
(\ref{LargeNdualiyofunknot2}) in \cite{Zhou} based on the results of
their previous works \cite{LLZ,LLZ2,LLLZ}.

\subsection{Integrality of the quantum invariants}
Now, let us collect the above discussions together. Let
$\mathcal{L}$ be a link in $S^3$, the large N duality predicts there
exists a Lagrangian submanifold $\mathcal{D}_\mathcal{L}$ in the
resolved confold $\hat{X}$, and provides us the identity
(\ref{LargeNdualiyofunknot2}). Since
$Z_{str}^{(\hat{X},\mathcal{D}_\mathcal{L})}(g_s,a,\mathbf{x})$ has
the integrality structures by the discussions in section 2.3, it
implies that $Z_{CS}^{(S^3,\mathcal{L})}(q,a,\mathbf{x})$ also
inherits the integrality structure. Usually, this integrality
structure is called the LMOV conjecture for link in \cite{LP1}.
Furthermore, as mentioned previously, the large N duality was
generalized to the case of framed knot $\mathcal{K}_\tau$ with
framing $\tau\in \mathbb{Z}$ in \cite{MV}, with the Chern-Simons
partition $Z_{CS}^{(S^3,\mathcal{K}_\tau)}$ for framed knot
$\mathcal{K}_\tau$ given in formula
(\ref{partitionfunctionframedunknot}). For convenience, we only
formulate the LMOV conjecture for framed knot $\mathcal{K}_\tau$ in
the following, although the conjecture should also holds for any
framed link, see \cite{LP3}.
\begin{conjecture}[LMOV conjecture for framed knot or framed LMOV conjecture]
\label{LMOVframedknot} Let
\begin{align*}
F_{CS}^{(S^3,\mathcal{K}_\tau)}(q,a,\mathbf{x})=\log
Z_{CS}^{(S^3,\mathcal{K}_\tau)}(q,a,\mathbf{x})
\end{align*}
be the Chern-Simons free energy for a framed knot $\mathcal{K}_\tau$
in $S^3$. Then there exist functions
$f_{\lambda}(\mathcal{K}_\tau;q,a)$ such that
\begin{align*}
F_{CS}^{(S^3,\mathcal{K}_\tau)}(q,a,\mathbf{x})=\sum_{d=1}^\infty\frac{1}{d}\sum_{\lambda\in
\mathcal{P},\lambda\neq
0}f_{\lambda}(\mathcal{K}_\tau;q^d,a^d)s_{\lambda}(\mathbf{x}^d).
\end{align*}
Let $
\hat{f}_{\mu}(\mathcal{K}_\tau;q,a)=\sum_{\lambda}f_{\lambda}(\mathcal{K}_\tau;q,a)M_{\lambda\mu}(q)^{-1},
$ where $M_{\lambda\mu}(q)$ is defined in the formula
(\ref{Mlambdamu}). Denote $z=q^{\frac{1}{2}}-q^{-\frac{1}{2}}$, then
for any $\mu\in \mathcal{P}^+$, there are integers
$N_{\mu,g,Q}(\tau)$ such that
\begin{align*}
\hat{f}_{\mu}(\mathcal{K}_\tau;q,a)=\sum_{g\geq
0}\sum_{Q}N_{\mu,g,Q}(\tau)z^{2g-2}a^Q\in
z^{-2}\mathbb{Z}[z^{2},a^{\pm \frac{1}{2}}].
\end{align*}
Therefore,
\begin{align*}
\mathfrak{z}_\mu\hat{g}_{\mu}(\mathcal{K}_\tau;q,a)&=\sum_{\nu}\chi_{\nu}(C_\mu)\hat{f}_\nu(\mathcal{K}_\tau;q,a)\\\nonumber
&=\sum_{g\geq 0}\sum_{Q}n_{\mu,g,Q}(\tau)z^{2g-2}a^Q\in
z^{-2}\mathbb{Z}[z^{2},a^{\pm \frac{1}{2}}].
\end{align*}
where
$n_{\mu,g,Q}(\tau)=\sum_{\nu}\chi_{\nu}(C_\mu)N_{\nu,g,Q}(\tau)$.
\end{conjecture}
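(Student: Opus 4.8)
The plan is to prove integrality of the LMOV invariants by first establishing the claimed $f_\lambda$-decomposition of the Chern--Simons free energy, and then by a Möbius-inversion argument reducing the integrality of each $n_{\mu,g,Q}(\tau)$ to the integrality of a single ``connected'' generating function. The starting point is the explicit unknot formula \eqref{unknotformula} together with the framing twist \eqref{framedknotformula}, which gives $\mathcal{H}_\lambda(\mathcal{K}_\tau,q,a)=(-1)^{|\lambda|\tau}q^{\kappa_\lambda\tau/2}W_\lambda(U;q,a)$ in completely explicit form. First I would take the logarithm of the partition function \eqref{partitionfunctionframedunknot} and use the standard plethystic/power-sum manipulation: writing $\log Z=\log\bigl(\sum_\lambda \mathcal{H}_\lambda s_\lambda(\mathbf{x})\bigr)$ and expanding, the Adams operations $p_n\mapsto p_n^{(d)}$ and the identity $s_\lambda=\sum_\mu \mathfrak{z}_\mu^{-1}\chi_\lambda(C_\mu)p_\mu$ produce precisely the $\frac{1}{d}\sum_\lambda f_\lambda(q^d,a^d)s_\lambda(\mathbf{x}^d)$ structure. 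This is a formal manipulation that I would carry out to extract the definition of $f_\lambda$ and hence of $\hat f_\mu=\sum_\lambda f_\lambda M_{\lambda\mu}^{-1}$ and $n_{\mu,g,Q}(\tau)$.

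The key reduction is Möbius inversion in the covering index $d$. Since the free energy is assembled from the Adams-substituted data $f_\lambda(q^d,a^d)$, the quantity attached to a given partition $\mu$ splits as a sum over divisors $d\mid\mu$ of a rescaled single-covering contribution, and applying $\sum_{d}\mu(d)(\cdots)$ isolates the connected piece. This is exactly the mechanism already displayed in the statement of Theorem~\ref{LMOVframedknot3}, where $g_m(q,a)=\sum_{d\mid m}\mu(d)\mathcal{Z}_{m/d}(q^d,a^d)$ with
\begin{align*}
\mathcal{Z}_m(q,a)=(-1)^{m\tau}\sum_{|\nu|=m}\frac{1}{\mathfrak{z}_\nu}\frac{\{m\nu\tau\}}{\{m\}\{m\tau\}}\frac{\{\nu\}_a}{\{\nu\}}.
\end{align*}
The plan is to reduce the general $n_{\mu,g,Q}(\tau)$ to the one-row generating functions $g_m(q,a)$ and their multi-row analogues. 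For the single-hole case the integrality statement is literally Theorem~\ref{LMOVframedknot3}, which I may assume, yielding $g_m(q,a)\in z^{-2}\mathbb{Z}[z^2,a^{\pm1/2}]$; I would organize the argument so that the general conjecture follows once the building blocks $g_m$ are known to be integral, together with the observation that the change-of-basis matrices $M_{\lambda\mu}(q)$ and character values $\chi_\nu(C_\mu)$ are integral (the latter because characters of the symmetric group are integers, and $M_{\lambda\mu}^{-1}$ has entries in $\mathbb{Z}[z^{\pm2}]$ up to the universal $z$-powers).

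\textbf{Main obstacle.}
The hardest part will be controlling the pole structure in $z=q^{1/2}-q^{-1/2}$ and showing that after Möbius inversion the connected invariants lie in $z^{-2}\mathbb{Z}[z^2,a^{\pm1/2}]$ rather than merely in $\mathbb{Q}(q^{1/2})[a^{\pm1/2}]$. A priori each $f_\lambda$ carries denominators $\prod_i\{ \mu_i\}$ from \eqref{unknotformula} and spurious poles at roots of unity coming from the substitution $q\mapsto q^d$; the cancellation of all but the single double pole $z^{-2}$ is not formal and is where the cyclotomic/number-theoretic input enters. I expect to handle this by a valuation argument at each primitive $d$-th root of unity $\zeta$: one shows that the order of pole of $\mathfrak{z}_\mu\hat g_\mu$ at $q^{1/2}=\zeta$ is killed precisely by the Möbius sum, using that $\{kn\}/\{n\}$ evaluated near a root of unity is governed by divisibility of $k$ by the order of $\zeta$. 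This is the same phenomenon underlying the integrality proofs of \cite{LP1,Peng,Konishi}, and I would adapt their root-of-unity counting to the framed setting, the new feature being the framing factor $q^{\kappa_\lambda\tau/2}$, whose contribution I must check does not spoil the parity/integrality of the exponents of $a$ and the membership in $\mathbb{Z}[z^2]$.
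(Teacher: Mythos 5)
You are attempting to prove a statement that the paper itself leaves as a conjecture: Conjecture \ref{LMOVframedknot} is asserted for an \emph{arbitrary} framed knot $\mathcal{K}_\tau$ and all partitions $\mu$, and the paper proves it only in the special case of the framed unknot with $\mu=(m)$ a single row (Theorem \ref{LMOVframedknot3}, proved as Theorem \ref{integralitymainthm}), plus isolated genus-zero multi-hole statements at the extremal power $Q=|\mu|/2$. Your proposal silently specializes to the unknot at the outset by invoking the explicit formula (\ref{unknotformula}); for a general framed knot no such closed form exists, so your argument cannot address the conjecture as stated. Your first step (extracting the $f_\lambda$-decomposition by plethystic expansion of $\log Z$) is fine but purely formal; the paper treats it as definitional, and all the content lies in the integrality claim.

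The genuine gap is the claimed reduction of the multi-row case to the one-row blocks $g_m$. Two specific failures: (a) the entries of $M_{\lambda\mu}(q)^{-1}$ do \emph{not} lie in $\mathbb{Z}[z^{\pm 2}]$ — from (\ref{Mlambdamu}) the inverse change of basis carries denominators $\mathfrak{z}_\nu$ and $\prod_j\{\nu_j\}$, which is exactly why the conjecture is formulated for $\hat f_\mu$ and $\hat g_\mu$ and is nontrivial; (b) more fundamentally, for $l(\mu)\geq 2$ the connected quantity $\mathfrak{z}_\mu\hat g_\mu$ is assembled via (\ref{formulagmu}) from $\hat F_{\mu/d}=F_{\mu/d}/\{\mu/d\}$, where $F_\mu$ is an alternating sum over all decompositions $\cup_{i}\nu^i=\mu$ of products $\prod_i\mathcal{Z}_{\nu^i}/\mathfrak{z}_{\nu^i}$; this involves genuinely multi-row data $\mathcal{Z}_\nu$ and is not an integral combination of the $g_m$. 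In particular, the required pole bound — that $F_\mu$ vanishes at $z=0$ to order $l(\mu)-2$ so that division by $\{\mu\}\sim z^{l(\mu)}$ leaves only a double pole — is a delicate cancellation \emph{across} the decompositions, not a consequence of the one-hole case. Your root-of-unity valuation heuristic is indeed the mechanism the paper uses, but only for $\mu=(m)$: Lemma \ref{mmtaulemma} tracks each cyclotomic factor $\Phi_{m_1}(q)$ through $\{m\}\{m\tau\}g_m$ using expansions of $\{k m_1\}/\{m_1\}$ modulo $\{m_1\}^2$, and even there the argument occupies the technical core of Section 4.5. For multi-part $\mu$ the only known proof in framing zero (Liu--Peng \cite{LP1}) requires cut-and-join analysis and the cabling technique — a different toolset that your sketch does not supply. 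So after assuming Theorem \ref{LMOVframedknot3}, your proposal establishes nothing beyond what that theorem already asserts, and the passage from it to the full conjecture is precisely the missing (and, in the paper, open) content.
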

 K. Liu and P. Peng \cite{LP1} first studied the
mathematical structures of LMOV conjecture for general links (as to
the Chern-Simons partition (\ref{chernsimonspartition})), which is
equivalent to the framed LMOV conjecture for any links in framing
zero. They provided a proof for this case by using cut-and-join
analysis and the cabling technique \cite{LZ}. Motivated by the work
\cite{MV}, K. Liu and P. Peng \cite{LP3} formulated the framed LMOV
conjecture for any links(as to the Chern-Simons partition function
(\ref{partitionfunctionframedunknot}). In \cite{CLPZ}, the second
author, together with Q. Chen, K. Liu and P. Peng,  developed the
ideas in \cite{LP3} to study the mathematical structures in framed
LMOV and formulate congruence skein relations for colored HOMFLY-PT
invariants.

\section{LMOV invariants for framed unknot $U_\tau$}
In Section \ref{sectionlargeN}, we have showed that, for a framed
unknot $U_\tau$ in $S^3$, the large N duality holds \cite{Zhou}:
\begin{align*}
Z_{CS}^{(S^3,U_\tau)}(q,a;\mathbf{x})=Z_{str}^{(\hat{X},D_{\tau})}(g_s,a;\mathbf{x}),
\  q=e^{\sqrt{-1}g_s}.
\end{align*}
So one can compute LMOV invariants completely by using the colored
HOMFLY-PT invariants of the framed unknot $U_\tau$. On the other
hand side, by using mirror symmetry, one can also compute the
partition function $Z_{str}^{(\hat{X},D_{\tau})}(g_s,a;\mathbf{x})$
from B-model. The mirror geometry information of
$(\hat{X},D_{\tau})$ is encoded in the mirror curve
$\mathcal{C}_{\hat{X}}$. The disc counting information of
$(\hat{X},D_{\tau})$ was given by the superpotential related  to the
mirror curve \cite{AV,AKV}, and this fact was proved in \cite{FL}.

Furthermore, the open Gromov-Witten invariants with higher genus
with more holes can be computed by the Eynard-Orantin topological
recursions \cite{EO1}. This approach named the BKMP conjecture, was
proposed by Bouchard, Klemm, Mari\~no and Pasquetti \cite{BKMP}, and
was fully proved in \cite{EO2,FLZ} for any toric Calabi-Yau 3-fold
with  AV-brane, so we can also use the BKMP method to compute the
LMOV invariants for $(\hat{X},D_{\tau})$. To determine the mirror
curve of $(\hat{X},D_{\tau})$, there is standard method in toric
geometry. However, in \cite{AV2}, Aganagic and Vafa proposed another
effective method to compute the mirror curve, their method can be
applied to the more general large N geometry of any knot in $S^3$.
The rest contents of this section will be organized as follow, we
first illustrate the computations of the mirror curve of
$(\hat{X},D_{\tau})$ by using the new approach of \cite{AV}. Then,
we compute the explicit formulas for genus 0 LMOV invariants, and
prove the integrality of them. Next, we formulate the higher genus
with one hole LMOV invariants into a unified generating function,
and we prove this generating function lies in a certain integral
ring.

\subsection{a-deformed A-polynomial as the mirror curve}
The method used in \cite{AV2} to compute the mirror curve is based
on the fact that, colored HOMFLY-PT invariants colored by a
partition with a single row is a $q$-holonomic function, this fact
was conjectured and used in many literatures, such as
\cite{FGSA,FGS}, and was finally proved in \cite{GLL}. In fact, such
idea can go back to \cite{GL}.

Now, we illustrate the computation for the framed unknot $U_\tau$.
We first compute the noncommutative a-deformed $A$-polynomial(it is
called the Q-deformed A-polynomial in \cite{AV2}, the variable $Q$
in\cite{AV2} is the variable $a$ here) for $U_\tau$.

By formula (\ref{unknotformula}) ,The colored HOMFLY-PT invariants
colored by partition $(n)$ for the unknot $U$ is given by
\begin{align*}
W_{n}(U;q,a)=\frac{a^{\frac{1}{2}}-a^{-\frac{1}{2}}}{q^{\frac{1}{2}}-q^{-\frac{1}{2}}}\cdots
\frac{a^{\frac{1}{2}}q^{\frac{n-1}{2}}-a^{-\frac{1}{2}}q^{\frac{-n-1}{2}}}{q^{\frac{n}{2}}-q^{-\frac{n}{2}}}
\end{align*}
It gives the recursion
\begin{align*}
(q^{n+1}-1)W_{n+1}(U;q,a)-(a^{\frac{1}{2}}q^{n+\frac{1}{2}}-a^{-\frac{1}{2}}q^{\frac{1}{2}})W_n(U;q,a)=0.
\end{align*}
By formula (\ref{framedknotformula}), the framed colored HOMFLY-PT
invariants for the framed unknot with framing $\tau\in \mathbb{Z}$
is
\begin{align*}
\mathcal{H}_n(U_\tau;q,a)=(-1)^{n\tau}q^{\frac{n(n-1)}{2}\tau}W_n(U;q,a).
\end{align*}
So we get the recursion for $\mathcal{H}_n(U_\tau;q,a)$ as follow
\begin{align} \label{frameunknotrecursion}
(-1)^\tau(q^{n+1}-1)\mathcal{H}_{n+1}(U_\tau;q,a)-(a^{\frac{1}{2}}q^{n+\frac{1}{2}}-a^{-\frac{1}{2}}q^{\frac{1}{2}})q^{n\tau}\mathcal{H}_n(U_\tau;q,a)=0.
\end{align}

For a general series $\{\mathcal{H}_n(q,a)\}_{n\geq 0}$,  we
introduce two operators $M$ and $L$ act on
$\{\mathcal{H}_n(q,a)\}_{n\geq 0}$ as follow:
\begin{align*}
M\mathcal{H}_n=q^{n}\mathcal{H}_n,\
L\mathcal{H}_{n}=\mathcal{H}_{n+1}.
\end{align*}
then $LM=qML$.
\begin{definition}
The noncommutative a-deformed A-polynomial for series
$\{\mathcal{H}_n(q,a)\}_{n\geq 0}$ is a polynomial
$\hat{A}(M,L;q,a)$ of operators $M, L$, such that
\begin{align*}
\hat{A}(M,L;q,a)\mathcal{H}_n(q,a)=0, \text{for } \ n\geq 0.
\end{align*}
and $A(M,L;a)=\lim_{q\rightarrow 1}\hat{A}(M,L;q,a)$ is called the
a-deformed A-polynomial.
\end{definition}
Therefore, from the recursion  (\ref{frameunknotrecursion}),  we
obtain the noncommutative a-deformed A-polynomial for $U_\tau$ as
follow:
\begin{align*}
\hat{A}_{U_\tau}(M,L,q;a)=(-1)^\tau(qM-1)L-M^{\tau}(a^{\frac{1}{2}}q^{\frac{1}{2}}M-a^{-\frac{1}{2}}q^{\frac{1}{2}}).
\end{align*}
and the a-deformed A-polynomial is
\begin{align*}
A_{U_\tau}(M,L;a)=\lim_{q\rightarrow
1}\hat{A}(M,L,q;a)=(-1)^\tau(M-1)L-M^{\tau}(a^{\frac{1}{2}}M-a^{-\frac{1}{2}}),
\end{align*}

In order to get the mirror curve of $U_\tau$, we need the following
general result which is written in the following lemma. Let
$Z(x)=\sum_{k\geq 0}\mathcal{H}_k(q,a)x^k$ be a generating function
of the series $\{\mathcal{H}_k(q,a)|k\geq 0\}$. We also introduce
two operators $\hat{x},\hat{y}$ act on $Z(x)$ as follow:
\begin{align*}
\hat{x}Z(x)=xZ(x), \ \hat{y}Z(x)=Z(qx).
\end{align*}
then $\hat{y}\hat{x}=q\hat{x}\hat{y}$. It is easy to obtain the
following result (see Lemma 2.1 in \cite{GKS} for the similar
statement).
\begin{proposition}
Given a noncommutative A-polynomial
$\hat{A}(M,L,q,a)=\sum_{i,j}c_{i,j}M^iL^j$ for the series
$\{\mathcal{H}_k(q,a)|k\geq 0\}$, then we have
\begin{align} \label{ApolynomialforZ}
\hat{A}(\hat{y},\hat{x}^{-1},q,a)Z(x)=\sum_{i,j}\sum_{-j\leq k\leq
-1}\mathcal{H}_{k+j}q^{ki}x^k.
\end{align}
\end{proposition}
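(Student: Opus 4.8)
The plan is to evaluate the left-hand side by a direct substitution, expand it as a Laurent series in $x$, and observe that all nonnegative powers of $x$ assemble exactly into the recursion satisfied by $\{\mathcal{H}_k\}$ and hence vanish, leaving precisely the negative powers claimed on the right. The only genuine care needed is in the noncommutative bookkeeping.

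First I would record the componentwise action of a monomial on the two sides. Viewing $M$ and $L$ as operators on sequences, $M$ is diagonal and $L$ is the shift, so $(M^iL^j\mathcal{H})_n=q^{in}\mathcal{H}_{n+j}$; the crucial point is that the $q$-exponent is $q^{in}$ and \emph{not} $q^{i(n+j)}$, because after applying $L^j$ the entry $\mathcal{H}_{n+j}$ occupies position $n$, where $M$ multiplies by $q^n$. This is exactly what makes $LM=qML$ hold, and I would sanity-check it against the explicit recursion (\ref{frameunknotrecursion}) that defines $\hat{A}_{U_\tau}$. With this reading, the hypothesis $\hat{A}(M,L;q,a)\mathcal{H}_n=0$ becomes, componentwise,
\begin{align*}
\sum_{i,j}c_{i,j}\,q^{in}\,\mathcal{H}_{n+j}=0\qquad (n\geq 0).
\end{align*}

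Next I would check that $M\mapsto\hat{y}$, $L\mapsto\hat{x}^{-1}$ is an algebra homomorphism: from $\hat{y}\hat{x}=q\hat{x}\hat{y}$ one gets $\hat{x}^{-1}\hat{y}=q\hat{y}\hat{x}^{-1}$, which matches $LM=qML$, so $\hat{A}(\hat{y},\hat{x}^{-1},q,a)=\sum_{i,j}c_{i,j}\hat{y}^{i}\hat{x}^{-j}$ is unambiguous. Applying one monomial to $Z(x)=\sum_{k\geq 0}\mathcal{H}_k x^k$ gives, after the reindexing $m=k-j$,
\begin{align*}
\hat{y}^{i}\hat{x}^{-j}Z(x)=\sum_{k\geq 0}\mathcal{H}_k\,q^{i(k-j)}x^{k-j}=\sum_{m\geq -j}\mathcal{H}_{m+j}\,q^{im}x^{m}.
\end{align*}
Summing against $c_{i,j}$ and splitting the range $m\geq -j$ into $m\geq 0$ and $-j\leq m\leq -1$ produces
\begin{align*}
\hat{A}(\hat{y},\hat{x}^{-1},q,a)Z(x)=\sum_{m\geq 0}\Big(\sum_{i,j}c_{i,j}\,q^{im}\mathcal{H}_{m+j}\Big)x^{m}+\sum_{i,j}\sum_{-j\leq m\leq -1}c_{i,j}\,\mathcal{H}_{m+j}\,q^{im}x^{m}.
\end{align*}

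Finally, for each fixed $m\geq 0$ the bracketed coefficient is exactly the left-hand side of the recursion recorded above (with $n=m$), hence it vanishes; what remains is $\sum_{i,j}\sum_{-j\leq k\leq -1}\mathcal{H}_{k+j}q^{ki}x^{k}$, which is the asserted identity (writing $k$ for $m$). I do not expect a real obstacle: this is a bookkeeping identity, and the only place an error could arise is the combination of the noncommutative substitution with the truncation of $Z(x)$ at $k=0$ — it is precisely that truncation which leaves the boundary terms $-j\leq k\leq -1$ instead of an identically vanishing right-hand side.
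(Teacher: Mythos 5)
Your proposal is correct and follows essentially the same route as the paper: apply each monomial $\hat{y}^i\hat{x}^{-j}$ to $Z(x)$, reindex the resulting series, split off the nonnegative powers of $x$, and kill them with the componentwise recursion $\sum_{i,j}c_{i,j}q^{in}\mathcal{H}_{n+j}=0$ for $n\geq 0$; your extra care in checking that $(M^iL^j\mathcal{H})_n=q^{in}\mathcal{H}_{n+j}$ and that $\hat{x}^{-1},\hat{y}$ satisfy the same commutation relation is sound and only makes explicit what the paper leaves implicit. The one cosmetic point is that the surviving boundary sum should retain the coefficients $c_{i,j}$, as it does in your penultimate display; the stated right-hand side (and the paper's own last line) omits them, which is a typo in the statement rather than a flaw in your argument.
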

\begin{proof}
Since
\begin{align*}
\hat{A}(\hat{y},\hat{x}^{-1},q,a)Z(x)&=\sum_{i,j}c_{i,j}\hat{y}^i\hat{x}^{-j}Z(x)
\\\nonumber
&=\sum_{i,j}c_{i,j}q^{-ij}x^{-j}Z(q^ix)\\\nonumber
&=\sum_{i,j}c_{i,j}\sum_{n\geq
0}\mathcal{H}_nq^{(n-j)i}x^{n-j}\\\nonumber
&=\sum_{i,j}c_{i,j}\sum_{k\geq 0}\mathcal{H}_{k+j}q^{k
i}x^k+\sum_{i,j}\sum_{-j\leq k\leq -1}a_{k+j}q^{ki}x^k.
\end{align*}
and by the definitions of the operators $M,L$,
$\hat{A}(M,L,q,a)\mathcal{H}_k=0$ gives
\begin{align*}
\sum_{i,j}c_{i,j}q^{ki}\mathcal{H}_{k+j}=0, \text{for} \ k\geq 0.
\end{align*}
We obtain the formula (\ref{ApolynomialforZ}).
\end{proof}
Finally, the mirror curve is given by
\begin{align*}
A(y,x^{-1};a)=\lim_{q\rightarrow
1}\hat{A}(\hat{y},\hat{x}^{-1};q,a)=0
\end{align*}
In our case, the mirror curve is:
\begin{align} \label{mirrorcurve2}
A_{U_\tau}(y,x^{-1};a)=y-1-a^{-\frac{1}{2}}(-1)^\tau
xy^{\tau}(ay-1)=0.
\end{align}

\subsection{Disc countings}
For convenience, we let $X=a^{-\frac{1}{2}}(-1)^\tau x$, and
$Y=1-y$, then the mirror curve (\ref{mirrorcurve2}) becomes the
functional equation
\begin{align}  \label{mirrorcurvesimple}
Y=X(1-Y)^\tau(1-a(1-Y)).
\end{align}
In order to solve the above equation, we introduce the following
Lagrangian inversion formula \cite{Stanley}.
\begin{lemma}
Let $\phi(\lambda)$ be an invertible formal power series in the
indeterminate $\lambda$. Then the functional equation $Y=X\phi(Y)$
has a unique formal power series solution $Y=Y(X)$. Moreover, if $f$
is a formal power series, then
\begin{align} \label{lagrangeinversion}
f(Y(X))=f(0)+\sum_{n\geq
1}\frac{X^n}{n}\left[\frac{df(\lambda)}{d\lambda}\phi(\lambda)^{n}\right]_{\lambda^{n-1}}
\end{align}
\end{lemma}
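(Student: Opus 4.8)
The plan is to prove the lemma by formal residue calculus over the base field, using only the hypothesis that $\phi$ is a unit, i.e. $\phi(0)\neq 0$. First I would settle existence and uniqueness of $Y=Y(X)$. Writing $Y(X)=\sum_{k\geq 1}a_kX^k$ (the constant term must vanish, since $Y=X\phi(Y)$ forces $X\mid Y$), the equation $Y=X\phi(Y)$ determines the coefficients recursively: comparing coefficients of $X^n$ expresses $a_n$ as a polynomial in $a_1,\dots,a_{n-1}$ and the coefficients of $\phi$, with $a_1=\phi(0)\neq 0$. This shows $Y(X)$ exists, is unique, and has order exactly $1$, a fact I will need below to justify the substitution and the change of variables.

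Second I would record two standard facts about the formal residue $\operatorname{Res}_\lambda$, defined as the coefficient of $\lambda^{-1}$. (i) For any formal Laurent series $h$ one has $\operatorname{Res}_\lambda h'(\lambda)=0$, which is immediate because $\lambda^{-1}$ never occurs in a derivative. (ii) The change-of-variables formula: if $w(\lambda)$ is a formal power series of order $1$, then $\operatorname{Res}_\lambda\big[(h\circ w)(\lambda)\,w'(\lambda)\big]=\operatorname{Res}_w h(w)$ for every formal Laurent series $h$. I would prove (ii) on monomials $h(w)=w^k$: for $k\neq-1$ the integrand is $\tfrac{1}{k+1}\tfrac{d}{d\lambda}w^{k+1}$, whose residue vanishes by (i), while for $k=-1$ one computes $\operatorname{Res}_\lambda(w'/w)=1$ from $w(\lambda)=c\lambda(1+\cdots)$ with $c\neq0$; linearity then gives the general case.

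Third, with these tools the main identity follows quickly. For $n\geq1$ I write $n\,[X^n]f(Y(X))=[X^{n-1}]\tfrac{d}{dX}f(Y(X))=\operatorname{Res}_X\!\big[X^{-n}f'(Y(X))\,Y'(X)\big]$. The key step is to eliminate the explicit $X$ inside the residue: the functional equation gives $X=Y(X)/\phi(Y(X))$, hence $X^{-n}=Y(X)^{-n}\phi(Y(X))^{n}$, so the integrand equals $(h\circ Y)(X)\,Y'(X)$ with $h(\lambda)=\lambda^{-n}\phi(\lambda)^{n}f'(\lambda)$. Applying formula (ii) with $w=Y(X)$, which has order $1$ by the first step, yields $n\,[X^n]f(Y(X))=\operatorname{Res}_\lambda\big[\lambda^{-n}\phi(\lambda)^{n}f'(\lambda)\big]=[\lambda^{n-1}]\big(f'(\lambda)\phi(\lambda)^{n}\big)$, which is exactly the claimed coefficient; the $n=0$ contribution is $f(Y(0))=f(0)$, completing the stated expansion.

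The main obstacle is the rigorous justification of the change-of-variables formula (ii) in the purely formal setting, together with the bookkeeping guaranteeing that $Y(X)$ has order $1$, so that the composition $h\circ Y$ and the substitution $X^{-n}=Y^{-n}\phi(Y)^{n}$ are legitimate operations on formal Laurent series; everything else reduces to routine coefficient extraction.
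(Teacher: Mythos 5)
Your proof is correct, and all the delicate points are handled: the order-one property of $Y(X)$ justifying composition and the substitution $X^{-n}=Y(X)^{-n}\phi(Y(X))^{n}$, the vanishing of residues of derivatives, and the monomial-by-monomial verification of the change-of-variables formula including the $k=-1$ case via $\operatorname{Res}_\lambda(w'/w)=1$. Note that the paper does not prove this lemma at all --- it is quoted directly from Stanley's \emph{Enumerative Combinatorics} --- and your formal-residue argument is essentially the standard proof found in that cited reference, so you have in effect supplied the self-contained proof the paper delegates to the literature.
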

\begin{remark}
In the following, we will frequently use the binomial coefficient
$\binom{n}{k}$ for all $n\in \mathbb{Z}$. That means for $n<0$, we
define $\binom{n}{k}=(-1)^k\binom{-n+k-1}{k}$.
\end{remark}

In our case, we take $ \phi(Y)=(1-Y)^\tau(1-a(1-Y)). $ Let
$f(Y)=1-Y$, by formula (\ref{lagrangeinversion}), we obtain
\begin{align*}
y(X)=1-Y(X)=1+\sum_{n\geq 1}\frac{X^n}{n}\sum_{i\geq
0}(-1)^{n+i}\binom{n}{i}\binom{n\tau +i}{n-1}a^i
\end{align*}
since $\phi(\lambda)^n$ has the expansion
\begin{align*}
\phi(\lambda)^n&=(1-\lambda)^{n\tau}(1-a(1-\lambda))^n\\\nonumber
&=\sum_{i\geq 0}\binom{n}{i}(-a)^i(1-\lambda)^{n\tau+i} \\\nonumber
&=\sum_{i,j\geq
0}\binom{n}{i}(-1)^{i+j}\binom{n\tau+i}{j}a^i\lambda^j.
\end{align*}
Moreover, if we let $f(Y(X))=\log(1-Y(X))$, then
\begin{align*}
\left[\frac{df(\lambda)}{d\lambda}\phi(\lambda)^{n}\right]_{\lambda^{n-1}}&=\sum_{i\geq
0}(-1)^{i}\binom{n}{i}\sum_{j=0}^{n-1}(-1)^{j+1}\binom{n\tau+i}{j}a^{i}\\\nonumber
&=\sum_{i\geq
0}(-1)^{i}\binom{n}{i}(-1)^{n}\binom{n\tau+i-1}{n-1}a^{i}
\end{align*}
where we have used the combinatoric identity:
\begin{align*}
\sum_{j=0}^{n-1}(-1)^{j+1}\binom{m}{j}=(-1)^n\binom{m-1}{n-1}.
\end{align*}
Formula (\ref{lagrangeinversion}) gives
\begin{align*}
\log(y(X))=\log(1-Y(X))=\sum_{n\geq 1}\frac{X^n}{n}\sum_{i\geq
0}(-1)^{n+i}\binom{n}{i}\binom{n\tau+i-1}{n-1}a^{i}.
\end{align*}
i.e.
\begin{align*}
\log(y(x))=\sum_{n\geq 1}\frac{x^n}{n}\sum_{i\geq
0}(-1)^{n\tau+n+i}\binom{n}{i}\binom{n\tau+i-1}{n-1}a^{i-\frac{n}{2}}.
\end{align*}
By BKMP's construction in genus 0 with one hole, one obtains
\begin{align} \label{disccoutingformulaunknot}
F_{(0,1)}&=\int \log(y(x))\frac{dx}{x}\\\nonumber &=\sum_{n\geq
1}\frac{x^n}{n^2}\sum_{i\geq
0}(-1)^{n\tau+n+i}\binom{n}{i}\binom{n\tau+i-1}{n-1}a^{i-\frac{n}{2}}.
\end{align}
By formula (\ref{disccoutingformula}), and if we let
$n_{m,l}(\tau)=n_{m,0,l-\frac{m}{2}}(\tau)$, then
\begin{align} \label{disccoutingformula2}
F_{(0,1)}=-\sum_{m\geq
1}\sum_{d|m,d|l}d^{-2}n_{\frac{m}{d},\frac{l}{d}}(\tau)x^ma^{l-\frac{m}{2}}.
\end{align}
If we let
\begin{align*}
c_{m,l}(\tau)=-\frac{(-1)^{m\tau+m+l}}{m^2}\binom{m}{l}\binom{m\tau+l-1}{m-1},
\end{align*}
by comparing the coefficients of $x^ma^{l-\frac{m}{2}}$ in
(\ref{disccoutingformula2}) and (\ref{disccoutingformulaunknot}), we
have
\begin{align*}
c_{m,l}(\tau)=\sum_{d|m,d|l}\frac{n_{m/d,l/d}(\tau)}{d^2}.
\end{align*}
By M\"{o}bius inversion formula,
\begin{align*}
n_{m,l}(\tau)=\sum_{d|m,d|l}\frac{\mu(d)}{d^2}c_{\frac{m}{d},\frac{l}{d}}(\tau).
\end{align*}

\begin{theorem} \label{integralitythmonehole}
For any $\tau\in \mathbb{Z}$, $m\geq 1, l\geq 0$, we have
$n_{m,l}(\tau) \in \mathbb{Z}$.
\end{theorem}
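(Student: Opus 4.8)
The plan is to reformulate Theorem \ref{integralitythmonehole} as a pure divisibility statement and attack it prime by prime. Writing $a_{m,l}=(-1)^{m(\tau+1)+l}\binom{m}{l}\binom{m\tau+l-1}{m-1}$ (an integer, once one adopts the negative-binomial convention of the preceding remark), one has $c_{m,l}(\tau)=-a_{m,l}/m^2$, so that $n_{m,l}(\tau)=-\frac{1}{m^2}\sum_{d\mid\gcd(m,l)}\mu(d)\,a_{m/d,l/d}$. Hence $n_{m,l}(\tau)\in\mathbb{Z}$ is equivalent to $m^2\mid T(m,l)$, where $T(m,l):=\sum_{d\mid\gcd(m,l)}\mu(d)\,a_{m/d,l/d}$. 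By the Chinese Remainder Theorem it suffices to prove, for every prime $p$, that $v_p(T(m,l))\ge 2\alpha$ where $\alpha:=v_p(m)$ and $v_p$ is the $p$-adic valuation.

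First I would dispose of the case $p\nmid\gcd(m,l)$. If $p\nmid m$ there is nothing to prove. If $p\mid m$ but $p\nmid l$, then every squarefree $d\mid\gcd(m,l)$ is prime to $p$, so each surviving term $a_{m/d,l/d}$ has $v_p(m/d)=\alpha$ and $p\nmid(l/d)$; a Kummer carry-count then gives $v_p\binom{m/d}{l/d}\ge\alpha$ and $v_p\binom{(m/d)\tau+l/d-1}{m/d-1}\ge\alpha$ (the bottom $\alpha$ base-$p$ digits force a carry in each position once the relevant residue is a unit), so $v_p(a_{m/d,l/d})\ge 2\alpha$ termwise and $v_p(T)\ge 2\alpha$. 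The genuinely delicate case is $p\mid\gcd(m,l)$. Here each squarefree $d\mid\gcd(m,l)$ is uniquely $d'$ or $pd'$ with $p\nmid d'$ and $d'\mid g_0$ (the prime-to-$p$ part of $\gcd(m,l)$), and $\mu(pd')=-\mu(d')$, so the sum telescopes into pairs: $T(m,l)=\sum_{d'\mid g_0}\mu(d')\bigl(a_{M,L}-a_{M/p,L/p}\bigr)$ with $M=m/d'$, $L=l/d'$ and $v_p(M)=\alpha$. The task reduces to $v_p(a_{M,L}-a_{M/p,L/p})\ge 2\alpha$.

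For odd $p$ the two signs agree, since $M\equiv M/p$ and $L\equiv L/p\pmod 2$, so the bracket is $\pm\bigl(B_1(M,L)B_2(M,L)-B_1(M/p,L/p)B_2(M/p,L/p)\bigr)$ with $B_1=\binom{M}{L}$, $B_2=\binom{M\tau+L-1}{M-1}$. I would write this as $(B_1-B_1')B_2+B_1'(B_2-B_2')$ and control each binomial difference by a supercongruence: $B_1(M,L)-B_1(M/p,L/p)=\binom{pA}{pB}-\binom{A}{B}$ directly, while for $B_2$ the identity $\binom{pa-1}{pb-1}=\frac{b}{a}\binom{pa}{pb}$ (with $a=(M/p)\tau+L/p$, $b=M/p$) again converts the difference into $\binom{pa}{pb}-\binom{a}{b}$ up to a rational prefactor. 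To both I apply the Jacobsthal--Kazandzidis supercongruence $\binom{pA}{pB}\equiv\binom{A}{B}\pmod{p^3AB(A-B)\binom{A}{B}}$, and combine its valuation with Kummer estimates for the cofactors $B_2$, $B_1'$ to reach the threshold $2\alpha$. The hard part will be precisely this balancing: the supercongruence must supply \emph{one power of $m$ beyond} the Frobenius/Dold congruence that mere integrality of $y(x)$ (which itself follows from Lagrange inversion, since $\phi(\lambda)=(1-\lambda)^\tau(1-a(1-\lambda))\in\mathbb{Z}[a][[\lambda]]$) already guarantees, and checking that the cofactor valuations plus the Kazandzidis exponent never fall short of $2\alpha$ requires a case split on whether $v_p(l)\ge\alpha$ or $v_p(l)<\alpha$. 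The two remaining obstacles I expect to isolate and treat by hand are the small primes $p=2,3$, where the Kazandzidis modulus degrades from $p^3$ to $p^2$, and the prime $p=2$ in particular, where the signs $(-1)^{M(\tau+1)+L}$ and $(-1)^{(M/2)(\tau+1)+L/2}$ need no longer coincide and the telescoping pairing must be replaced by a direct valuation estimate.
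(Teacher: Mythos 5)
Your overall architecture coincides with the paper's: the same M\"obius reformulation to $m^2\mid T(m,l)$, the same prime-by-prime reduction, and the same pairing of the divisors $d$ and $pd$ (for $p\nmid d$), so that everything rests on showing $p^{2\alpha}$ divides a signed difference of two products of binomial coefficients. Where you diverge is the engine for that key divisibility. The paper proves an elementary congruence for the prime-to-$p$ factorial $f_p(n)=n!/(p^{[n/p]}[n/p]!)$, namely $f_p(p^\alpha n)\equiv f_p(p^\alpha)^n\pmod{p^{2\alpha}}$, writes the ratio of the two binomial products as a ratio of $f_p$'s of the five quantities $m,l,m-l,m\tau+l,m(\tau-1)+l$ (each divisible by $p^{\min(\alpha,\beta)}$), and extracts the missing $p^{2\max(\alpha-\beta,0)}$ from the prefactor $\binom{m/p}{l/p}\binom{(m\tau+l)/p-1}{m/p-1}$. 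You instead split the difference as $(B_1-B_1')B_2+B_1'(B_2-B_2')$ and feed each binomial difference to Jacobsthal--Kazandzidis. For odd $p$ your bookkeeping does appear to reach the threshold: writing $v_p$ for the $p$-adic valuation, the modulus already contributes $v_p(A)+v_p(B)+v_p(A-B)\geq \alpha+\beta+\min(\alpha,\beta)-3$, and $v_p\binom{A}{B}\geq\alpha-\beta$ when $\beta\leq\alpha$, which together with the exponent $3$ (or $2$ at $p=3$) clears $2\alpha$ since $\beta\geq 1$. This is a legitimate alternative for odd primes, at the cost of importing a nontrivial supercongruence where the paper's Lemma 4.4 is self-contained.

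The gap is at $p=2$. You correctly observe that the signs $(-1)^{M(\tau+1)+L}$ and $(-1)^{(M/2)(\tau+1)+L/2}$ can disagree, but your proposed remedy --- replacing the telescoping pairing by a \emph{direct} valuation estimate --- cannot work. Take $m=l=2$, $\tau=1$, so $\alpha=1$ and you need $4\mid T$. The two products are $\binom{2}{2}\binom{3}{1}=3$ and $\binom{1}{1}\binom{1}{0}=1$, both odd; the divisibility $T=3+1=4$ comes entirely from the signed near-cancellation $3\equiv -1\pmod 4$, not from either term carrying any power of $2$. So at $p=2$ you must keep the pairing and prove a \emph{signed} congruence between the two products modulo $2^{2\alpha}$; no termwise bound can succeed. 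This is exactly what the paper's refinement $f_2(2n)\equiv(-1)^{[n/2]}\pmod 4$ supplies, followed by the parity computation of $[\frac{l}{4}]+[\frac{m-l}{4}]+[\frac{m\tau+l}{4}]+[\frac{m(\tau-1)+l}{4}]+\frac{m\tau+m+l}{2}$ (which depends only on $\tau$ modulo $2$) in its Lemma 4.6. An analogous signed $p=2$ supercongruence would have to be added to your framework; until then the argument is incomplete for even $m$.
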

Before proving Theorem \ref{integralitythmonehole}, we define the
following function, for nonnegative integer $n$ and prime number
$p$,
\begin{equation*}
    f_p(n)=\prod_{i=1,p\nmid i}^n i = \frac{n!}{p^{[n/p]}[n/p]!}.
\end{equation*}

\begin{lemma} \label{functionfp}
    For odd prime numbers $p$ and $\alpha\geq 1$ or for $p=2$, $\alpha\geq 2$,
    we have $p^{2\alpha}\mid f_p(p^{\alpha} n)-f_p(p^{\alpha})^n$. For $p=2, \alpha=1$, $f_2(2n)\equiv (-1)^{[n/2]}\pmod{4}$ \label{luo:lemma1-0}
\end{lemma}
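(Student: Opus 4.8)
The plan is to exploit the multiplicative block structure of $f_p$. Splitting the integers $1,\dots,p^{\alpha}n$ that are prime to $p$ into $n$ consecutive blocks of length $p^{\alpha}$, and using that $p\nmid (kp^{\alpha}+i)$ is equivalent to $p\nmid i$, I would write
\begin{align*}
f_p(p^{\alpha}n)=\prod_{k=0}^{n-1}B_k,\qquad B_k=\prod_{\substack{i=1\\ p\nmid i}}^{p^{\alpha}}(kp^{\alpha}+i).
\end{align*}
Everything then reduces to the single block estimate $B_k\equiv f_p(p^{\alpha})\pmod{p^{2\alpha}}$ for each $k$, since multiplying the $n$ resulting congruences yields $f_p(p^{\alpha}n)\equiv f_p(p^{\alpha})^{n}\pmod{p^{2\alpha}}$, which is the assertion.

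For a fixed block, the observation that $(kp^{\alpha})^{2}\equiv 0\pmod{p^{2\alpha}}$ means only the constant and linear terms in $kp^{\alpha}$ survive:
\begin{align*}
B_k\equiv P+kp^{\alpha}e\pmod{p^{2\alpha}},\qquad P=\prod_{\substack{i=1\\ p\nmid i}}^{p^{\alpha}}i=f_p(p^{\alpha}),\quad e=\sum_{\substack{i\\ p\nmid i}}\ \prod_{\substack{j\neq i\\ p\nmid j}}j.
\end{align*}
Because $e$ carries a factor $p^{\alpha}$, it suffices to control $e$ modulo $p^{\alpha}$. Over $\mathbb{Z}/p^{\alpha}$ the residues $i$ with $p\nmid i$ are precisely the units, so $\prod_{j\neq i}j\equiv P\,i^{-1}$ and hence $e\equiv P\sum_{i}i^{-1}\pmod{p^{\alpha}}$ with $P$ invertible. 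Thus the block estimate comes down to the vanishing of $\sum_{i\in(\mathbb{Z}/p^{\alpha})^{\times}}i^{-1}$ modulo $p^{\alpha}$.

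The crux of the argument, and the source of the case split, is exactly this sum. Since inversion permutes the unit group, $\sum_i i^{-1}=\sum_i i$, which I would evaluate by the involution $i\leftrightarrow p^{\alpha}-i$; every orbit sums to $p^{\alpha}\equiv 0$. A fixed point would require $2i\equiv 0\pmod{p^{\alpha}}$. For $p$ odd this forces $i\equiv 0$, which is not a unit, so the involution is fixed-point-free and the sum vanishes for every $\alpha\geq 1$. For $p=2$ the only candidate is $i=2^{\alpha-1}$, which is a unit exactly when $\alpha=1$; for $\alpha\geq 2$ it is even, the involution is again fixed-point-free, and the sum vanishes. This is precisely where the hypothesis ``$p$ odd, $\alpha\geq 1$ or $p=2,\alpha\geq 2$'' enters, and the main subtlety is that one needs the linear term to vanish to the full modulus $p^{\alpha}$ (not merely $p^{\alpha-1}$) to reach $p^{2\alpha}$; the pairing delivers exactly this.

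It remains to treat the exceptional case $p=2$, $\alpha=1$ directly, where the pairing leaves the fixed unit $1$ and the above breaks. Here $f_2(2n)=\prod_{j=1}^{n}(2j-1)=1\cdot 3\cdot 5\cdots(2n-1)$ is the product of the first $n$ odd numbers, whose residues modulo $4$ alternate $1,3,1,3,\dots$. Grouping consecutive factors in pairs gives $1\cdot 3\equiv -1\pmod 4$, and there are $[n/2]$ complete pairs, with a single leftover factor $\equiv 1\pmod 4$ when $n$ is odd (the last odd number is then $\equiv 1$). This yields $f_2(2n)\equiv(-1)^{[n/2]}\pmod 4$, completing the lemma. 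I expect the only real care needed is in the fixed-point analysis of the involution and in keeping the leftover-parity bookkeeping straight in this final case.
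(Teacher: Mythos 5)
Your proof is correct and follows essentially the same route as the paper: expand each length-$p^{\alpha}$ block of the product to first order in $p^{\alpha}$, and kill the linear term by pairing $i\leftrightarrow p^{\alpha}-i$ in the resulting sum of inverse units, with the hypothesis on $(p,\alpha)$ entering exactly through the fixed-point analysis of that involution (the paper phrases this as an induction peeling off one block and as $\frac{1}{i}+\frac{1}{p^{\alpha}-i}=\frac{p^{\alpha}}{i(p^{\alpha}-i)}$, which is the same computation). Your explicit treatment of the $p=2$, $\alpha=1$ case supplies the detail the paper dismisses as straightforward.
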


\begin{proof}
 With $\alpha\geq 2$ or $p>2$, $p^{\alpha-1}(p-1)$ is even,
 \begin{align*}
     &f_p(p^\alpha n)-f_p(p^\alpha(n-1))f_p(p^\alpha) \\
     &= f_p(p^\alpha(n-1))\left(\prod_{i=1,p\nmid i}^{p^\alpha} (p^{\alpha}(n-1)+i)-f_p(p^\alpha)\right) \\
     & \equiv p^{\alpha}(n-1) f_p(p^\alpha(n-1))f_p(p^\alpha)\left(\sum_{i=1,p\nmid i}^{p^\alpha}\frac{1}{i}\right) \pmod{p^{2\alpha}} \\
     & \equiv  p^{\alpha}(n-1) f_p(p^\alpha(n-1))f_p(p^\alpha)\left(\sum_{i=1,p\nmid i}^{[p^\alpha/2]} (\frac{1}{i}+\frac{1}{p^{\alpha}-i})\right) \pmod{p^{2\alpha}} \\
     & \equiv  p^{\alpha}(n-1) f_p(p^\alpha(n-1))f_p(p^\alpha)\left(\sum_{i=1,p\nmid i}^{[p^\alpha/2]} \frac{p^{\alpha}}{i(p^\alpha-i)} \right)\equiv 0, \pmod{p^{2\alpha}}
\end{align*}

Thus the first part of the Lemma is proved by induction. For $p=2,
\alpha=1$, the formula is straightforward.
\end{proof}

\begin{lemma}
    For odd prime number $p$ and $m=p^\alpha a, l=p^\beta b$, $p\nmid a, p\nmid b$, $\alpha\geq 1, \beta \geq 0$, we have
    \begin{equation}
        p^{2\alpha} \mid \binom{m}{l}\binom{m\tau+l-1}{m-1}-\binom{\frac{m}{p}}{\frac{l}{p}}\binom{\frac{m\tau+l}{p}-1}{\frac{m}{p}-1}
    \end{equation}
    where for $\beta=0$, the second term is defined to be zero. \label{luo:lemma2-0}
\end{lemma}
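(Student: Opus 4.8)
The plan is to peel one factor of $p$ off every factorial, reduce the assertion to a congruence between two $p$-adic units, and then extract a reflection cancellation. Throughout write $N=m\tau+l$, and note that when $\beta\ge 1$ each of $m,\,l,\,m-l,\,N,\,N-m$ is divisible by $p$. Substituting $n!=f_p(n)\,p^{\lfloor n/p\rfloor}\lfloor n/p\rfloor!$ into every factorial, the powers of $p$ cancel and one gets
\[
\binom{m}{l}=\frac{f_p(m)}{f_p(l)\,f_p(m-l)}\binom{m/p}{l/p},\qquad
\binom{N-1}{m-1}=\frac{m}{N}\binom{N}{m}=\frac{f_p(N)}{f_p(m)\,f_p(N-m)}\binom{N/p-1}{m/p-1}.
\]
Multiplying these, $f_p(m)$ cancels, so the difference in the Lemma equals $(R-1)$ times the integer $\binom{m/p}{l/p}\binom{N/p-1}{m/p-1}$, where
\[
R=\frac{f_p(N)}{f_p(l)\,f_p(m-l)\,f_p(N-m)}\in\mathbb{Z}_{(p)}^{\times}.
\]
Thus for $\beta\ge1$ the whole Lemma reduces to showing $R\equiv 1\pmod{p^{2\alpha}}$. (For $\tau\le 0$, where $N-1<0$, I first replace the negative upper index using the convention $\binom{n}{k}=(-1)^k\binom{-n+k-1}{k}$ of the Remark; the resulting arguments are nonnegative and the same factorisation applies.)

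The case $\beta=0$ is disjoint and elementary: the second term is $0$, so I only need $p^{2\alpha}\mid\binom{m}{l}\binom{N-1}{m-1}$. Since $p\mid m$ but $p\nmid l$, adding $l+(m-l)=m$ in base $p$ forces a carry out of each of the positions $0,\dots,\alpha-1$ (the last $\alpha$ digits of $m$ vanish while the units digit of $l$ does not), so Kummer's theorem gives $v_p\binom{m}{l}\ge\alpha$. The identical borrow pattern in $(m-1)+(N-m)$, using $m-1\equiv p^\alpha-1$ and $N-m\equiv l\not\equiv 0\pmod{p^\alpha}$, gives $v_p\binom{N-1}{m-1}\ge\alpha$ (again reducing to nonnegative arguments by the convention when $\tau\le 0$); multiplying proves the claim.

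For the ratio $R$ (so $\beta\ge1$) the governing facts are the additive relation $N=l+(m-l)+(N-m)$ and the congruences $N\equiv N-m\equiv l$, $m-l\equiv -l\pmod{p^\alpha}$. If $\beta\ge\alpha$ every argument of $R$ is divisible by $p^\alpha$, and Lemma \ref{functionfp} at level $\alpha$ gives $f_p(p^\alpha n)\equiv f_p(p^\alpha)^{\,n}\pmod{p^{2\alpha}}$ for each factor; the four exponents combine to $(N-l-(m-l)-(N-m))/p^\alpha=0$, so $R\equiv 1$ at once.

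The genuine difficulty is the range $1\le\beta<\alpha$: there all four arguments have valuation \emph{exactly} $\beta$, so Lemma \ref{functionfp} at level $\beta$ only yields $R\equiv 1\pmod{p^{2\beta}}$, short of the required $p^{2\alpha}$. The extra cancellation is a reflection phenomenon, which I would make precise through Morita's $p$-adic Gamma function, using $f_p(n)=(-1)^{n+1}\Gamma_p(n+1)$. Regrouping,
\[
R=\frac{\Gamma_p(N+1)}{\Gamma_p(N-m+1)}\cdot\frac{1}{\Gamma_p(l+1)\,\Gamma_p(m-l+1)},
\]
the reflection formula $\Gamma_p(x)\Gamma_p(1-x)=\pm1$ together with $p\mid l$ gives $\Gamma_p(l+1)\Gamma_p(1-l)=1$, while writing $\Gamma_p(m-l+1)$ and $\Gamma_p(N+1)$ relative to $\Gamma_p(1-l)$ and $\Gamma_p(N-m+1)$ and expanding to first order modulo $p^{2\alpha}$ (legitimate for odd $p$, since the offset $m$ is divisible by $p^\alpha$) reduces $R-1$ to $m\big(\psi_p(N-m+1)-\psi_p(1-l)\big)$ with $\psi_p=\Gamma_p'/\Gamma_p$. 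Finally $\psi_p(N-m+1)\equiv\psi_p(l+1)\pmod{p^\alpha}$ by continuity, and $\psi_p(l+1)=\psi_p(-l)=\psi_p(1-l)$ by the reflection and functional equations of $\Gamma_p$ (again using $p\mid l$); hence the bracket is $\equiv0\pmod{p^\alpha}$ and, multiplied by $m\equiv0\pmod{p^\alpha}$, is $\equiv0\pmod{p^{2\alpha}}$. This reflection-driven cancellation of the order-$p^{2\beta}$ terms is the crux; equivalently one could replace the $\Gamma_p$ input by a second-order (modulo $p^{2\alpha}$) strengthening of Lemma \ref{functionfp} and check directly that the four error terms cancel.
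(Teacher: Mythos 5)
Your opening reduction is the same as the paper's: after writing $n!=f_p(n)\,p^{\lfloor n/p\rfloor}\lfloor n/p\rfloor!$ the difference factors as $\binom{m/p}{l/p}\binom{(m\tau+l)/p-1}{m/p-1}(R-1)$ with $R=f_p(N)/\bigl(f_p(l)f_p(m-l)f_p(N-m)\bigr)$, $N=m\tau+l$, and your treatment of $\beta=0$ (Kummer) and of $\beta\ge\alpha$ (Lemma~\ref{luo:lemma1-0} at level $\alpha$, with the four exponents summing to zero) is correct. The problem is the range $1\le\beta<\alpha$, which you yourself identify as the crux. There you set out to prove the \emph{stronger} congruence $R\equiv1\pmod{p^{2\alpha}}$, and the argument you give for it is only a sketch: it rests on the reflection formula for Morita's $\Gamma_p$, on the integrality and $1$-Lipschitz continuity of $\psi_p=\Gamma_p'/\Gamma_p$ on $\mathbb{Z}_p$, and on a second-order Taylor expansion of $\log\Gamma_p$ with $p$-integral coefficients valid modulo $p^{2\alpha}$. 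None of these facts is proved, none is available in the paper, and their validity for $p=3$ (which the lemma must cover) is precisely the kind of small-prime situation where Jacobsthal/Kazandzidis-type congruences are known to weaken. Your closing alternative --- a ``second-order strengthening of Lemma~\ref{luo:lemma1-0}'' --- is likewise a plan rather than a proof. As written, the central case of the lemma is not established.

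The missing idea is that the stronger congruence on $R$ is not needed at all. When $\beta<\alpha$ the binomial prefactor multiplying $(R-1)$ already supplies the deficit: by the absorption identities $\binom{m/p}{l/p}=\frac{m}{l}\binom{m/p-1}{l/p-1}$ and $\binom{N/p-1}{m/p-1}=\frac{m}{N}\binom{N/p}{m/p}$, and since $v_p(l)=v_p(N)=\beta$ while $v_p(m)=\alpha$, each of the two prefactors is divisible by $p^{\alpha-\beta}$. Hence $\binom{m/p}{l/p}\binom{N/p-1}{m/p-1}(R-1)$ is divisible by $p^{2(\alpha-\beta)}\cdot p^{2\beta}=p^{2\alpha}$, where the factor $p^{2\beta}$ on $R-1$ comes from Lemma~\ref{luo:lemma1-0} applied at level $\beta=\min(\alpha,\beta)$, exactly as in your $\beta\ge\alpha$ case. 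This is the route the paper takes; it closes the case you left open and makes the $\Gamma_p$ machinery unnecessary.
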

\begin{proof}
    \begin{align}
        & \binom{m}{l}\binom{m\tau+l-1}{m-1}-\binom{\frac{m}{p}}{\frac{l}{p}}\binom{\frac{m\tau+l}{p}-1}{\frac{m}{p}-1} \nonumber \\
        & = \binom{\frac{m}{p}}{\frac{l}{p}}\binom{\frac{m\tau+l}{p}-1}{\frac{m}{p}-1} \left( \frac{f_p(m)}{f_p(l)f_p(m-l)}\cdot \frac{f_p(m\tau+l)}{f_p(m)f_p(m(\tau-1)+l)}-1\right) \label{luo:eq5-0}
    \end{align}

    Write $\binom{\frac{m}{p}}{\frac{l}{p}}=\frac{m}{l}\binom{\frac{m}{p}-1}{\frac{l}{p}-1}$ and $\binom{\frac{m\tau+l}{p}-1}{\frac{m}{p}-1}=\frac{m}{m\tau +l} \binom{\frac{m\tau+l}{p}}{\frac{m}{p}}$, both are divisible by $p^{\max(\alpha-\beta,0)}$.
    Each element of $\{m, l, m-l, m\tau+l, m(\tau-1)+l\}$ is divisible by $p^{\min(\alpha,\beta)}$, so by Lemma~\ref{luo:lemma1-0},
\begin{equation}
    \frac{f_p(m)}{f_p(l)f_p(m-l)}\cdot \frac{f_p(m\tau+l)}{f_p(m)f_p(m(\tau-1)+l)}-1 \label{luo:term1-0}
\end{equation}
is divisible by $p^{2\min(\alpha,\beta)}$ (including the case
$\beta=0$) in $p$-adic number field. Thus (\ref{luo:eq5-0}) is
divisible by
$p^{2\max(\alpha-\beta,0)+2\min(\alpha,\beta)}=p^{2\alpha}$.
\end{proof}

\begin{lemma}
    For $m=2^\alpha a, l=2^\beta b, \alpha\geq 1, \beta \geq 0$,
    \[ 2^{2\alpha}\mid (-1)^{m\tau+m+l}\binom{m}{l}\binom{m\tau+l-1}{m-1}-(-1)^{\frac{m\tau+m+l}{2}}\binom{\frac{m}{2}}{\frac{l}{2}}\binom{\frac{m\tau+l}{2}-1}{\frac{m}{2}-1},\]
    where the second term is set to zero for $\beta=0$. \label{luo:lemma3-0}
\end{lemma}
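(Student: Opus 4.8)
The plan is to mirror the structure of the proof of Lemma~\ref{luo:lemma2-0}, carrying along the two features that distinguish the prime $2$: the global signs $(-1)^{m\tau+m+l}$ and $(-1)^{(m\tau+m+l)/2}$ (which are precisely the signs occurring in $c_{m,l}(\tau)$ and $c_{m/2,l/2}(\tau)$, so they are forced upon us by the intended application), and the exceptional behaviour of $f_2$ at $\alpha=1$ recorded in Lemma~\ref{luo:lemma1-0}. Throughout I write $\gamma=\min(\alpha,\beta)$ and take $a,b$ odd as in Lemma~\ref{luo:lemma2-0}. Since $\alpha\geq 1$ the integer $m$ is even, hence $m\tau+m+l\equiv l\pmod 2$; this single observation organizes the whole argument.

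First I would dispose of the degenerate case $\beta=0$ (so $l$ is odd and the second term vanishes), where it suffices to show $2^{2\alpha}\mid\binom{m}{l}\binom{m\tau+l-1}{m-1}$. Writing $\binom{m}{l}=\frac{m}{l}\binom{m-1}{l-1}$ and $\binom{m\tau+l-1}{m-1}=\frac{m}{m\tau+l}\binom{m\tau+l}{m}$, both $m/l$ and $m/(m\tau+l)$ have $2$-adic valuation exactly $\alpha$ (as $l$ and $m\tau+l$ are odd) while the remaining factors are integers, so the product has valuation $\geq 2\alpha$.

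For $\beta\geq 1$ the integer $m\tau+m+l$ is even, so $(-1)^{m\tau+m+l}=1$ and the target reduces to $2^{2\alpha}\mid \binom{m}{l}\binom{m\tau+l-1}{m-1}-\epsilon\,P'$, where $P'=\binom{m/2}{l/2}\binom{(m\tau+l)/2-1}{m/2-1}$ and $\epsilon=(-1)^{(m\tau+m+l)/2}$. Exactly as in (\ref{luo:eq5-0}) I would factor $\binom{m}{l}\binom{m\tau+l-1}{m-1}=P'\cdot R$ with, after cancelling $f_2(m)$, $R=\frac{f_2(m\tau+l)}{f_2(l)f_2(m-l)f_2(m(\tau-1)+l)}$; this turns the claim into $v_2\bigl(P'(R-\epsilon)\bigr)\geq 2\alpha$. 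The prefactor is bounded as before: $\binom{m/2}{l/2}$ and $\binom{(m\tau+l)/2-1}{m/2-1}$ are each divisible by $2^{\max(\alpha-\beta,0)}$, whence $v_2(P')\geq 2\max(\alpha-\beta,0)$, and it remains to prove $R\equiv\epsilon\pmod{2^{2\gamma}}$.

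The heart of the matter is this last congruence. When $\gamma\geq 2$, each of $m\tau+l,\,l,\,m-l,\,m(\tau-1)+l$ is divisible by $2^\gamma$; applying the clean part of Lemma~\ref{luo:lemma1-0} to replace every $f_2(2^\gamma k)$ by $f_2(2^\gamma)^k$ modulo $2^{2\gamma}$, the exponents balance since $(m\tau+l)/2^\gamma=l/2^\gamma+(m-l)/2^\gamma+(m(\tau-1)+l)/2^\gamma$, giving $R\equiv 1$; and here $(m\tau+m+l)/2=2^{\alpha-1}a(\tau+1)+2^{\beta-1}b$ is even, so $\epsilon=1$. The genuinely new case is $\gamma=1$, where I expect the main difficulty to lie: all four arguments are still even, so the sign formula $f_2(2n)\equiv(-1)^{[n/2]}\pmod 4$ yields $R\equiv(-1)^{S}\pmod 4$ with $S=[(m\tau+l)/4]+[l/4]+[(m-l)/4]+[(m(\tau-1)+l)/4]$, and the statement collapses to the parity identity $S\equiv (m\tau+m+l)/2\pmod 2$. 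I would verify this by splitting into the subcases $(\alpha=1,\beta\geq 2)$, $(\alpha\geq 2,\beta=1)$ and $(\alpha=\beta=1)$, tracking the parities of $\tau$, $a$, $b$ (in the last subcase $v_2(m\tau+l)$ itself depends on the parity of $\tau$, forcing a further split); spot checks such as $m=l=2$ with $\tau$ even giving $S\equiv 0\equiv\epsilon$ and $\tau$ odd giving $S\equiv 1$, $\epsilon=-1$, confirm the bookkeeping. Combining $v_2(P')\geq 2\max(\alpha-\beta,0)$ with $v_2(R-\epsilon)\geq 2\gamma$ gives $v_2(P'(R-\epsilon))\geq 2\alpha$, finishing the proof. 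The floor-sum parity identity for $\gamma=1$ is the only delicate point; everything else is a direct transcription of the odd-prime argument.
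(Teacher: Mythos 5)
Your proposal is correct and follows essentially the same route as the paper: the same factorization through $f_2$ as in (\ref{luo:eq6-0}), the same valuation bookkeeping for the prefactor (your $2\max(\alpha-\beta,0)+2\min(\alpha,\beta)=2\alpha$ is the paper's count reorganized, and your $\gamma\geq 2$ case is exactly the paper's reduction of $\alpha,\beta\geq 2$ to the Lemma~\ref{luo:lemma2-0} argument), and the same reduction of the $\gamma=1$ case to the parity of the floor-sum (\ref{luo:eq7-0}). The one step you leave as a subcase-check --- that floor-sum parity identity --- is dispatched in the paper by noting that, since $m$ is even, its parity depends only on $\tau\bmod 2$, so it suffices to verify $\tau=0$ and $\tau=1$; your more laborious split over $(\alpha,\beta)$ and the parities of $\tau,a,b$ does also close it.
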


\begin{proof}
    For the case $\alpha\geq 2, \beta\geq 2$, both $m\tau+m+l$ and $(m\tau+m+l)/2$ are even, the Lemma is proved as in Lemma~\ref{luo:lemma2-0}. For the case $\beta=0$, both $\binom{m}{l}$ and $\binom{m\tau+l-1}{m-1}$ are divisible by $2^\alpha$, and the Lemma is also proved.
    For remaining cases $\alpha>\beta =1$ or $\beta\geq \alpha=1$, we compute similarly as (\ref{luo:eq5-0}),
    \begin{align}
        & (-1)^{m\tau+m+l}\binom{m}{l}\binom{m\tau+l-1}{m-1}-(-1)^{\frac{m\tau+m+l}{2}}\binom{\frac{m}{2}}{\frac{l}{2}}\binom{\frac{m\tau+l}{2}-1}{\frac{m}{2}-1}  \nonumber \\
        & = \binom{\frac{m}{2}}{\frac{l}{2}}\binom{\frac{m\tau+l}{2}-1}{\frac{m}{2}-1} \left( \frac{f_2(m)}{f_2(l)f_2(m-l)}\cdot \frac{f_2(m\tau+l)}{f_2(m(\tau-1)+l)f_2(m)}-(-1)^{\frac{m\tau+m+l}{2}}\right) \label{luo:eq6-0}
    \end{align}
    Both $\binom{\frac{m}{2}}{\frac{l}{2}}$ and $\binom{\frac{m\tau+l}{2}-1}{\frac{m}{2}-1}$ are divisible by $2^{\alpha-1}$, it suffice to prove that the third factor is divisible by $4$, which is, by Lemma~\ref{luo:lemma1-0},
    \begin{equation}
        (-1)^{[\frac{l}{4}]+[\frac{m-l}{4}]+[\frac{m\tau+l}{4}]+[\frac{m(\tau-1)+l}{4}]}-(-1)^{\frac{m\tau+m+l}{2}}. \pmod{4} \nonumber
    \end{equation}
    It is divisible by $4$ if
    \begin{equation}
        [\frac{l}{4}]+[\frac{m-l}{4}]+[\frac{m\tau+l}{4}]+[\frac{m(\tau-1)+l}{4}]+\frac{m\tau+m+l}{2}\label{luo:eq7-0}
    \end{equation}
    is even.
Parity of (\ref{luo:eq7-0})  depends only on $\tau\pmod{2}$. For
$\tau=1$, (\ref{luo:eq7-0}) reduces to
$[l/4]+[(m-l)/4]+[(m+l)/4]+[l/4]+l/2$. For $\tau=0$, it reduces to
$[l/4]+[(m-l)/4]+[l/4]+[(l-m)/4]+(m+l)/2$. Both are obviously even.

\end{proof}
Now, we can finish the proof of Theorem \ref{integralitythmonehole}.

\begin{proof}
    For a prime number $p\mid m$, write $m=p^\alpha a, p\nmid a$.
    \begin{align}
        n_{m,l}(\tau)&=\sum_{d\mid m, d\mid l} \frac{\mu(d)}{d^2} c_{\frac{m}{d},\frac{l}{d}}(\tau)  \nonumber \\
                     &= \frac{1}{m^2} \sum_{d\mid m, d\mid l}\mu(d)(-1)^{\frac{m\tau+m+l}{d}} \binom{\frac{m}{d}}{\frac{l}{d}}\binom{\frac{m\tau+l}{d}-1}{\frac{m}{d}-1} \nonumber \\
        &=\frac{1}{m^2} \sum_{d\mid m, d\mid l, p\nmid d} \mu(d) \left((-1)^{\frac{m\tau+m+l}{d}}\binom{\frac{m}{d}}{\frac{l}{d}}\binom{\frac{m\tau+l}{d}-1}{\frac{m}{d}-1}-(-1)^{\frac{m\tau+m+l}{dp}} \binom{\frac{m}{dp}}{\frac{l}{dp}}\binom{\frac{m\tau+l}{dp}-1}{\frac{m}{dp}-1}\right) \label{luo:eq9-0}
    \end{align}
    where for $pd\nmid l$, second term of (\ref{luo:eq9-0}) is understood to be zero. For odd prime number $p$, $\frac{m\tau+m+l}{d}$ and $\frac{m\tau+m+l}{dp}$ have the same parity. Since $p^\alpha\mid\mid\frac{m}{d}$, $p^{2\alpha}$ divides the summand in (\ref{luo:eq9-0})  by Lemma~\ref{luo:lemma2-0}. For $p=2$, it is divisible by $2^{2\alpha}$ by  Lemma~\ref{luo:lemma3-0}.

    $p^{2\alpha}$ divides the sum in (\ref{luo:eq9-0}) for every $p^\alpha \mid\mid m$, thus $n_{m,l}$ is an integer.
\end{proof}

\subsection{Annulus counting}
The  Bergmann kernel of the curve (\ref{mirrorcurvesimple}) is
\begin{align*}
B(X_1,X_2)=\frac{dY_1dY_2}{(Y_1-Y_2)^2}.
\end{align*}
By the construction of BKMP \cite{BKMP}, the annulus amplitude is
calculated by the integral
\begin{align*}
\int
\left(B(X_1,X_2)-\frac{dX_1dX_2}{(X_1-X_2)^2}\right)=\ln\left(\frac{Y_2(X_2)-Y_1(X_1)}{X_2-X_1}\right)
\end{align*}

More precisely, for $m_1,m_2\geq 1$, the coefficients
$\left[\ln\left(\frac{Y_2(X_2)-Y_1(X_1)}{X_2-X_1}\right)\right]_{x_1^{m_1}x_2^{m_2}a^l}$
gives the annulus Gromov-Witten invariants $K_{(m_1,m_2),0,l}$.

Let
$b_{n,i}=\frac{(-1)^{n+i}}{n+1}\binom{n+1}{i}\binom{(n+1)\tau+i}{n}$
and $b_n=\sum_{i\geq 0}b_{n,i}a^i$. In particular $b_0=1-a$. Then
\begin{align*}
Y(X)=\sum_{n\geq 1}b_nX^n.
\end{align*}
and
\begin{align*}
\frac{Y_2(X_2)-Y_1(X_1)}{X_2-X_1}=(1-a)+\sum_{n\geq
1}b_n\left(\sum_{i=0}^{n}X_1^iX_2^{n-i}\right).
\end{align*}
Let $\tilde{b}_{m,l}=\sum_{i=0}^lb_{m,i}$  and
$\tilde{b}_{m}=\sum_{l=0}\tilde{b}_{m,l}a^l$. For $m_1\geq 1,
m_2\geq 1$, the coefficients $c_{(m_1,m_2)}$ of
$[X_1^{m_1}X_2^{m_2}]$ in the expansion
\begin{align*}
\ln\left(1+\sum_{n\geq
1}\tilde{b}_n\left(\sum_{i=0}^{n}X_1^iX_2^{n-i}\right)\right)
\end{align*}
is given by
\begin{align*}
c_{(m_1,m_2)}=\sum_{|\mu|=m_1+m_2}\frac{(-1)^{l(\mu)-1}(l(\mu)-1)!\tilde{b}_\mu}{|Aut(\mu)|}|S_{\mu}(m_1)|
\end{align*}
where $S_{\mu}(m_1)$ is the set
\begin{align*}
S_\mu(m_1)=\{(i_1,...,i_{l(\mu)})\in
\mathbb{Z}^{l(\mu)}|\sum_{k=1}^{l(\mu)}i_k=m_1,\ \text{where}\ 0\leq
i_k\leq \mu_k, \ \text{for} \ k=1,..,l(\mu)\},
\end{align*}
by this definition,  $S_\mu(m_1)=S_{\mu}(m_2)$.

We write $c_{(m_1,m_2)}=\sum_{l\geq 0}c_{(m_1,m_2),l}a^l$, then the
annulus amplitude is
\begin{align*}
F_{(0,2)}=\sum_{m_1\geq 1,m_2\geq 1}\sum_{l\geq
0}(-1)^{(m_1+m_2)\tau}c_{(m_1,m_2),l}a^{l-\frac{m_1+m_2}{2}}x_1^{m_1}x_{2}^{m_2}.
\end{align*}
If we let $n_{(m_1,m_2),l}=n_{(m_1,m_2),0,l-\frac{m_1+m_2}{2}}$, the
multiple covering formula (\ref{multipecoveringlhole}) for $l=2$
gives
\begin{align*}
F_{(0,2)}=\sum_{m_1\geq 1,m_2\geq 1}\sum_{l\geq
0}\sum_{d|m_1,d|m_2,d|l}\frac{1}{d}n_{(\frac{m_1}{d},\frac{m_2}{d}),\frac{l}{d}}a^{l-\frac{m_1+m_2}{2}}x_1^{m_1}x_{2}^{m_2}
\end{align*}
we have
\begin{align*}
(-1)^{(m_1+m_2)\tau}c_{(m_1,m_2),l}=\sum_{d|m_1,d|m_2,d|l}\frac{1}{d}n_{(\frac{m_1}{d},\frac{m_2}{d}),\frac{l}{d}}.
\end{align*}
so
\begin{align*}
n_{(m_1,m_2),l}=\sum_{d|m_1,d|m_2,d|l}\frac{\mu(d)}{d}(-1)^{\frac{(m_1+m_2)\tau}{d}}c_{(\frac{m_1}{d},\frac{m_2}{d}),\frac{l}{d}}.
\end{align*}
In particular, when $l=\frac{m_1+m_2}{2}$, we only need to consider
the the curve $Y=X(1-Y)^\tau$.  We need the following formula:
\begin{lemma} [Lemma 2.3 of \cite{Zhu0}]
\begin{align}
\ln\left(\frac{Y_{1}(X_1)-Y_{2}(X_2)}{X_{1}-X_{2}}\right)&=\sum_{m_1,m_2
\geq
1}\frac{1}{m_1+m_2}\binom{m_1\tau+m_1-1}{m_1}\binom{m_2\tau+m_2}{m_2}X_1^{m_1}X_2^{m_2}\\\nonumber
&-\tau\left(\ln(1-Y_{1}(X_1))+\ln(1-Y_{2}(X_2))\right).
\end{align}
\end{lemma}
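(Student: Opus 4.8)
The plan is to prove the identity by applying the total degree (Euler) operator $D=X_1\partial_{X_1}+X_2\partial_{X_2}$ to both sides. Since $D$ multiplies each monomial $X_1^{m_1}X_2^{m_2}$ by $m_1+m_2$, it is injective on the maximal ideal of power series with vanishing constant term. Both sides of the asserted identity lie in that ideal: as $X_1,X_2\to 0$ the difference quotient $\frac{Y_1-Y_2}{X_1-X_2}$ tends to the coefficient of $X$ in $Y(X)$, namely $1$, so its logarithm vanishes, while the right-hand side visibly starts in positive degree. Hence it suffices to verify $DH=D(\mathrm{RHS})$, where $H=\ln\frac{Y_1(X_1)-Y_2(X_2)}{X_1-X_2}$ and $Y(X)$ is governed by the reduced functional equation $Y=X(1-Y)^{-\tau}$ (the normalization reproducing the binomials in the statement), together with the matching of constant terms at the origin.

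For the left-hand side I compute $D_1H=X_1\partial_{X_1}H=\frac{X_1Y_1'}{Y_1-Y_2}-\frac{X_1}{X_1-X_2}$ and the symmetric $D_2H$; adding them, the diagonal poles collapse, $-\frac{X_1}{X_1-X_2}+\frac{X_2}{X_1-X_2}=-1$, leaving the closed form $DH=\frac{g(Y_1)-g(Y_2)}{Y_1-Y_2}-1$, where $g(Y):=X\,Y'(X)=\frac{Y(1-Y)}{1-(\tau+1)Y}$ is obtained by implicit differentiation of $Y=X(1-Y)^{-\tau}$. The numerator of the divided difference $g(Y_1)-g(Y_2)$ factors through $Y_1-Y_2$, so this rewrites $DH$ as the explicit rational function $\frac{\tau\big[(Y_1+Y_2)-(\tau+1)Y_1Y_2\big]}{(1-(\tau+1)Y_1)(1-(\tau+1)Y_2)}$ of $Y_1,Y_2$.

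For the right-hand side the decisive simplification is that $D$ cancels the harmonic weight $\frac{1}{m_1+m_2}$, turning the double sum into a \emph{product} of one-variable series: $D$ applied to it equals $P(X_1)Q(X_2)$ with $P(X)=\sum_{m\geq1}\binom{m(\tau+1)-1}{m}X^m$ and $Q(X)=\sum_{m\geq1}\binom{m(\tau+1)}{m}X^m$. A standard diagonal form of Lagrange inversion applied to $Y=X(1-Y)^{-\tau}$ evaluates these in closed form: since $\binom{m(\tau+1)-1}{m}=[\lambda^m](1-\lambda)^{-m\tau}$, one gets $P(X)=\frac{\tau Y}{1-(\tau+1)Y}$, and the elementary ratio $\binom{m(\tau+1)}{m}=\frac{\tau+1}{\tau}\binom{m(\tau+1)-1}{m}$ gives $Q=\frac{\tau+1}{\tau}P$. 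Applying $D$ to the single-variable terms $-\tau\ln(1-Y_i)$ produces $\tau\frac{Y_i}{1-(\tau+1)Y_i}$, using $g(Y)/(1-Y)=Y/(1-(\tau+1)Y)$. Thus $D(\mathrm{RHS})=\frac{\tau+1}{\tau}P(X_1)P(X_2)+\tau\big(\frac{Y_1}{1-(\tau+1)Y_1}+\frac{Y_2}{1-(\tau+1)Y_2}\big)$, again a rational function of $Y_1,Y_2$.

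It then remains to verify the rational identity $DH=D(\mathrm{RHS})$. Using $\frac{\tau+1}{\tau}P(X_1)P(X_2)=\tau(\tau+1)\frac{Y_1Y_2}{(1-(\tau+1)Y_1)(1-(\tau+1)Y_2)}$ and clearing the common denominator $(1-(\tau+1)Y_1)(1-(\tau+1)Y_2)$, the identity becomes $\tau\big[(Y_1+Y_2)-(\tau+1)Y_1Y_2\big]=\tau(\tau+1)Y_1Y_2+\tau\big[(Y_1+Y_2)-2(\tau+1)Y_1Y_2\big]$, an elementary algebraic check. Together with injectivity of $D$ modulo constants and the vanishing of both constant terms, this yields the claimed formula. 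I expect the main obstacle to be the third step: pinning $P$ and $Q$ down in closed rational form via Lagrange inversion and confirming that the $\ln$-derivative terms combine with the product $P(X_1)Q(X_2)$ to reproduce $DH$ exactly. The clean point that makes the argument succeed is precisely that the Euler operator converts the weight $\frac{1}{m_1+m_2}$ into a product, collapsing a genuinely two-variable logarithmic identity into a one-variable Lagrange-inversion computation followed by elementary algebra.
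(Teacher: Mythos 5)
Your proof is correct, and it is worth noting at the outset that the paper itself contains no proof of this statement: the lemma is quoted verbatim from Lemma 2.3 of \cite{Zhu0}, so your Euler-operator argument is a genuinely independent, self-contained verification rather than a reconstruction of anything in the text. I checked the computational core. Implicit differentiation of $Y=X(1-Y)^{-\tau}$ does give $g(Y)=XY'=\frac{Y(1-Y)}{1-(\tau+1)Y}$; the divided difference is
\begin{align*}
\frac{g(Y_1)-g(Y_2)}{Y_1-Y_2}=\frac{1-(Y_1+Y_2)+(\tau+1)Y_1Y_2}{\bigl(1-(\tau+1)Y_1\bigr)\bigl(1-(\tau+1)Y_2\bigr)},
\end{align*}
so $DH=\frac{\tau\left[(Y_1+Y_2)-(\tau+1)Y_1Y_2\right]}{\left(1-(\tau+1)Y_1\right)\left(1-(\tau+1)Y_2\right)}$ as you claim. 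The diagonal form of Lagrange inversion, $\sum_{m\geq 0}[\lambda^m]\phi(\lambda)^m X^m=\bigl(1-X\phi'(Y)\bigr)^{-1}=\frac{1-Y}{1-(\tau+1)Y}$ with $\phi(\lambda)=(1-\lambda)^{-\tau}$, yields $P(X)=\frac{\tau Y}{1-(\tau+1)Y}$ after subtracting the $m=0$ term, since $[\lambda^m](1-\lambda)^{-m\tau}=\binom{m(\tau+1)-1}{m}$; the termwise ratio $\binom{m(\tau+1)}{m}=\frac{\tau+1}{\tau}\binom{m(\tau+1)-1}{m}$ is correct; and the final identity after clearing denominators closes exactly as you wrote. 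Two small remarks. First, your normalization is not merely convenient but necessary: the paper's surrounding text introduces the lemma with ``the curve $Y=X(1-Y)^{\tau}$,'' yet comparing the coefficient of $X_1X_2$ on both sides ($\binom{\tau}{2}$ versus $\frac{\tau(\tau+1)}{2}$) shows the identity as stated holds precisely for $Y=X(1-Y)^{-\tau}$; you silently corrected a sign-convention slip in the paper, and flagging the normalization explicitly, as you did, is the right call. Second, the intermediate division by $\tau$ in $Q=\frac{\tau+1}{\tau}P$ degenerates at $\tau=0$; since all coefficient identities involved are polynomial in $\tau$ (and the case $\tau=0$ is trivial, both sides vanishing), this is harmless, but a one-line remark to that effect would make the argument airtight for every integer $\tau$.
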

We have, for $m_1,m_2\geq 1$,
\begin{align} \label{annuluscountingunknot}
c_{(m_1,m_2),\frac{m_1+m_2}{2}}(\tau)&=\frac{1}{m_1+m_2}\binom{m_1\tau+m_1-1}{m_1}\binom{m_2\tau+m_2}{m_2}.
\end{align}

For brevity, we let
$n_{(m_1,m_2)}(\tau):=n_{(m_1,m_2),\frac{m_1+m_2}{2}}(\tau)$ which
is defined through formula (\ref{annuluscountingunknot}). Then we
have the following integrality result:
\begin{theorem} \label{theoremgenus0twohole}
For $m_1,m_2\geq 1$, and $\tau\in \mathbb{Z}$,
$n_{(m_1,m_2)}(\tau)\in \mathbb{Z}$.
\end{theorem}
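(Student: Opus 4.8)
The plan is to start from the explicit annulus formula \eqref{annuluscountingunknot} and the Möbius inversion preceding it, which together express
$n_{(m_1,m_2)}(\tau)=\frac{1}{m_1+m_2}\,S$, where
$S=\sum_{d\mid m_1,\,d\mid m_2}\mu(d)(\pm1)\binom{(m_1\tau+m_1)/d-1}{m_1/d}\binom{(m_2\tau+m_2)/d}{m_2/d}$
and the precise sign $(\pm1)=(-1)^{(m_1+m_2)(\tau+1)/d}$ is immaterial for integrality. Thus it suffices to prove, for every prime $p$ with $p^{\alpha}\mid\mid(m_1+m_2)$, that $v_p(S)\ge\alpha$, where $v_p$ denotes the $p$-adic valuation. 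Before the $p$-adic estimates I would record the elementary identities $\binom{am-1}{m}=\tau\binom{am-1}{m-1}$ and $\binom{am}{m}=(\tau+1)\binom{am-1}{m-1}$ for $a=\tau+1$; these exhibit the common factor $\tau(\tau+1)$ (matching the Mari\~no--Vafa value $K^{\tau}_{(m_1,m_2),0,(m_1+m_2)/2}$) and make the binomial factors uniform.

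The argument then splits according to whether $p$ divides $\gcd(m_1,m_2)$. When $p\mid\gcd(m_1,m_2)$, the sum $S$ contains both $d$ and $pd$ for each squarefree $d$ with $p\nmid d$, so I would pair these terms exactly as in the one-hole proof of Theorem \ref{integralitythmonehole}. The paired difference of products of binomials is estimated through the congruence machinery of Lemma \ref{luo:lemma1-0}; the two binomial-difference lemmas (Lemmas \ref{luo:lemma2-0} and \ref{luo:lemma3-0}) carry over to the factored form $\binom{(m_1\tau+m_1)/d-1}{m_1/d}\binom{(m_2\tau+m_2)/d}{m_2/d}$ because $f_p$ is multiplicative across the two factors. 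This produces a power of $p$ controlled by $\min\bigl(v_p(m_1),v_p(m_2)\bigr)$, which already yields $v_p(S)\ge\alpha$ whenever $v_p(m_1)\ne v_p(m_2)$, since there $\alpha=\min\bigl(v_p(m_1),v_p(m_2)\bigr)$.

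The genuinely new case is $p\nmid\gcd(m_1,m_2)$: since $p\mid(m_1+m_2)$ this forces $p\nmid m_1$ and $p\nmid m_2$, so no pairing is available and each summand must be bounded individually. The key sub-lemma to establish is that if $p\nmid u$, $p\nmid v$ and $p^{\alpha}\mid\mid(u+v)$, then $p^{\alpha}\mid\binom{au-1}{u}\binom{av}{v}$ for every integer $a$ (applied with $u=m_1/d$, $v=m_2/d$, which are still coprime to $p$ and satisfy $p^{\alpha}\mid\mid(u+v)$). For $\alpha=1$ this follows from Kummer's theorem: viewing the product as a polynomial in $a$, the residues $a\bmod p$ at which $\binom{av}{v}$ vanishes are $v^{-1}\{0,\dots,v-1\}$ and those at which $\binom{au-1}{u}$ vanishes are $u^{-1}\{1,\dots,u\}$, and a short computation using $v\equiv-u\pmod p$ shows these two sets cover all of $\mathbb{Z}/p$ exactly when $p\mid(u+v)$. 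The same estimate also supplies the extra factor needed in the overlap case $v_p(m_1)=v_p(m_2)$ left over from the previous paragraph, where $\alpha=v_p(m_1)+v_p\bigl(m_1/p^{v_p(m_1)}+m_2/p^{v_p(m_2)}\bigr)$.

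I expect the main obstacle to be precisely this $p\nmid\gcd$ sub-lemma for higher prime powers $\alpha\ge2$, where the clean ``covering of residues'' picture must be upgraded to a careful count of base-$p$ carries guaranteeing at least $\alpha$ vanishing factors; and, as already in the one-hole case, the prime $p=2$, where the sign $(-1)^{(m_1+m_2)(\tau+1)/d}$ and the halving interact and force a separate parity analysis in the spirit of Lemma \ref{luo:lemma3-0}.
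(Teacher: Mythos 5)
Your skeleton is the same as the paper's: the Möbius-inverted formula for $n_{(m_1,m_2)}(\tau)$, a prime-by-prime estimate of the sum $S$ for each $p^{\alpha}\mid\mid(m_1+m_2)$, a case split on whether $p$ divides $\gcd(m_1,m_2)$, and pairing $d$ with $pd$ when it does. The genuine gap is exactly where you place ``the main obstacle'': the individual divisibility estimate. The paper's Lemma~\ref{luo:lemma3-1} states it in the generality actually needed (if $p^{\beta}\mid\mid(a,b)$ and $p^{\alpha}\mid a+b$ then $p^{\alpha-\beta}$ divides $\binom{a\tau+a-1}{a}\binom{b\tau+b}{b}$) and proves it in a few lines by Legendre's formula: the $p$-adic valuation of the product is $\sum_{i\geq 1}\bigl([\tfrac{a\tau+a-1}{p^i}]+[\tfrac{b\tau+b}{p^i}]-[\tfrac{a\tau-1}{p^i}]-[\tfrac{b\tau}{p^i}]-[\tfrac{a}{p^i}]-[\tfrac{b}{p^i}]\bigr)$, and the floor-function identities for $k\mid m+n$ or $k\mid m+n+1$ show that each index $i\leq\alpha$ contributes at least $1$ except for the $\beta$ indices where $p^i$ divides both $a$ and $b$. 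You leave this unproved for $\alpha\geq 2$, and your $\alpha=1$ argument is itself not correct as stated: whether $p$ divides $\binom{av}{v}$ is \emph{not} a function of $a\bmod p$ once $v>p$, since by Kummer/Lucas it depends on all base-$p$ digits of $av$. For instance with $p=3$, $v=4$ one has $\binom{8}{4}=70\not\equiv 0$ but $\binom{20}{4}=4845\equiv 0\pmod 3$, even though $2\equiv 5\pmod 3$; so the ``covering of residues'' computation does not establish even the case $\alpha=1$ in the generality you need ($u=m_1/d$, $v=m_2/d$ can exceed $p$).

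A secondary point: the same lemma is also what closes your ``overlap case'' $v_p(m_1)=v_p(m_2)$. In the paper, the difference of the $d$- and $pd$-terms is factored as $\binom{(a\tau+a)/p-1}{a/p}\binom{(b\tau+b)/p}{b/p}$ times a ratio of $f_p$-values; Lemma~\ref{luo:lemma3-1} supplies $p^{\alpha-\beta}$ from the binomial prefactor and the congruence $f_p(p^{\beta}k)\equiv f_p(p^{\beta})^k\pmod{p^{\beta}}$ supplies $p^{\beta}$ from the ratio, giving $p^{\alpha}$ uniformly rather than only the $p^{2\min(v_p(m_1),v_p(m_2))}$-type bound your pairing sketch would yield. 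So the plan is structurally right, but it hinges entirely on a lemma you have not proved; the elementary Legendre-formula count is the missing content.
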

 For nonnegative integer $n$
and prime number $q$, define
\begin{equation}
    f_q(n)=\prod_{i=1,q\nmid i}^n i = \frac{n!}{q^{[n/q]}[n/q]!} \nonumber
\end{equation}
It is obvious that
\begin{equation}
    f_q(q^\alpha k) \equiv f_q(q^{\alpha})^k \equiv (-1)^k \pmod{p^\alpha} \label{luo:eq3-1}
\end{equation}
We introduce the following Lemma first.
\begin{lemma}
    If $p^\beta\mid\mid (a,b), p^\alpha\mid a+b$, then $p^{\alpha-\beta}$ divides
    $$ \binom{a\tau+a-1}{a}\binom{b\tau+b}{b}.$$
    \label{luo:lemma3-1}
\end{lemma}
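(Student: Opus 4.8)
The plan is to estimate the $p$-adic valuation $v_p(\cdot)$ of each binomial factor through Kummer's theorem and to show the required power of $p$ is forced purely by the hypothesis $p^\alpha\mid a+b$. If $\alpha\le\beta$ the claim is vacuous, so I assume $\alpha>\beta$. Since $p^\beta\mid\mid(a,b)$ and $v_p(a+b)\ge\alpha>\beta$, the valuations $v_p(a),v_p(b)$ cannot differ (otherwise $v_p(a+b)=\min(v_p(a),v_p(b))=\beta<\alpha$), so necessarily $v_p(a)=v_p(b)=\beta$. Factoring $a\tau+a-1=a(\tau+1)-1$ and $b\tau+b=b(\tau+1)$, for $\tau\ge 1$ Kummer's theorem reads $v_p\binom{b(\tau+1)}{b}=\#\{k\ge 1:(b\bmod p^k)+(b\tau\bmod p^k)\ge p^k\}$ (the carries in the base-$p$ addition $b+b\tau$), and likewise $v_p\binom{a(\tau+1)-1}{a}=\#\{k\ge 1:(a\bmod p^k)+((a\tau-1)\bmod p^k)\ge p^k\}$ using $a(\tau+1)-1=a+(a\tau-1)$; the case $\tau=0$ is trivial since the first factor then vanishes.

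The key step is a per-level dichotomy: for every level $k$ with $\beta<k\le\alpha$, at least one of these two additions carries at level $k$. Put $A=a\bmod p^k$ and $B=b\bmod p^k$. Because $v_p(a)=v_p(b)=\beta<k$ we have $A,B\neq 0$, and because $k\le\alpha\le v_p(a+b)$ we have $A+B\equiv 0\pmod{p^k}$; with $0<A,B<p^k$ this forces $A+B=p^k$. Set $v=a\tau\bmod p^k$; then $b\tau\equiv -a\tau\pmod{p^k}$ gives $b\tau\bmod p^k=p^k-v$ when $v\neq 0$. In that case the $b$-addition carries iff $B+(p^k-v)\ge p^k$, i.e. $v\le B$, while the $a$-addition carries iff $A+(v-1)\ge p^k=A+B$, i.e. $v\ge B+1$; these conditions are complementary, so exactly one addition carries. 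If instead $v=0$, then $(a\tau-1)\bmod p^k=p^k-1$ and the $a$-addition carries since $A\ge 1$. Summing over the $\alpha-\beta$ distinct levels $k\in\{\beta+1,\dots,\alpha\}$, the combined carry count, which equals $v_p\binom{a(\tau+1)-1}{a}+v_p\binom{b(\tau+1)}{b}$, is at least $\alpha-\beta$, which is exactly the assertion.

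Finally, for $\tau\le -1$ I would reduce to the previous case through the convention $\binom{n}{k}=(-1)^k\binom{k-n-1}{k}$ for $n<0$ recorded in the Remark. Writing $\sigma=-\tau-1\ge 0$, this convention turns $\binom{a(\tau+1)-1}{a}$ into $\pm\binom{a(\sigma+1)}{a}$ and $\binom{b(\tau+1)}{b}$ into $\pm\binom{b(\sigma+1)-1}{b}$, so up to sign the product becomes the product for the nonnegative parameter $\sigma$ with the roles of $a$ and $b$ interchanged; since the hypotheses $p^\beta\mid\mid(a,b)$ and $p^\alpha\mid a+b$ are symmetric in $a$ and $b$, the $\tau\ge 0$ argument applies verbatim. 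I expect the main obstacle to be isolating the per-level carry dichotomy and verifying that it holds uniformly for all integers $\tau$; the valuation bookkeeping could alternatively be run through the function $f_p$ and the congruence $f_p(p^\alpha k)\equiv(-1)^k$, but the carry-counting formulation is what makes the cancellation $A+B=p^k$ transparent.
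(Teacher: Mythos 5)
Your proof is correct and is essentially the paper's argument in different clothing: the paper computes the same $p$-adic valuation via Legendre's formula, using the floor identities $[m/k]+[n/k]=(m+n)/k-1$ for $k\mid m+n+1$ and for $k\mid m+n,\ k\nmid m$, to extract exactly one unit of valuation from each level $p^i$ with $\beta<i\le\alpha$ --- which is precisely your per-level carry dichotomy restated. The only substantive addition on your side is the explicit reduction of $\tau\le -1$ to $\tau\ge 1$ via the negative-binomial convention (and the trivial $\tau=0$ case), a point the paper's proof passes over silently.
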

\begin{proof}
    Power of prime $p$ in $n!$ is
    $ \sum_{k=1}^\infty [\frac{n}{p^k}]. $
    Apply this to the binomial coefficients to find that the power of $p$ in $\binom{a\tau+a-1}{a}\binom{b\tau+b}{b}$ is
    \begin{align*}
        & \sum_{i=1}^{\infty} \left([\frac{a\tau+a-1}{p^i}]+[\frac{b\tau+b}{p^i}]\right)-\left([\frac{a\tau-1}{p^i}]+[\frac{b\tau}{p^i}]\right)-\left([\frac{a}{p^i}]+[\frac{b}{p^i}]\right) \\
        &\geq \sum_{i=1}^{\alpha} \left((\frac{(a+b)(\tau+1)}{p^i}-1)-(\frac{(a+b)\tau}{p^i}-1)\right) - \sum_{i=1}^{\beta}(\frac{a+b}{p^i})-\sum_{i=\beta+1}^{\alpha}(\frac{a+b}{p^i}-1) \\
        &=\alpha-\beta
    \end{align*}
    where we use the fact that for $k\mid m+n+1, k>1$, $[m/k]+[n/k]=(m+n)/k-1$ and for $k\mid m+n, k\nmid m$, $[m/k]+[n/k]=(m+n)/k-1$.
\end{proof}
Now, we can finish the proof of  Theorem \ref{theoremgenus0twohole}:
\begin{proof}
 By definition,
\begin{align}
    n_{(m_1,m_2)}(\tau)& =  \frac{1}{m_1+m_2}\sum_{d\mid m_1,d\mid m_2} \mu(d) (-1)^{(m_1+m_2)(\tau+1)/d} \nonumber \\
                     & \cdot \binom{(m_1\tau+m_1)/d-1}{m_1/d}\binom{(m_2\tau+m_2)/d}{m_2/d} \label{luo:eq1-1}
\end{align}

    Let $p$ be any prime divisor of $m_1+m_2$, $p^\alpha\mid \mid m_1+m_2$. We will prove $p^\alpha$ divides the summation in (\ref{luo:eq1-1}), thus $m_1+m_2$ also divides and $n_{m_1,m_2}$ are integers.

   If $p\nmid m_1$, each summand in (\ref{luo:eq1-1}) corresponds to $p\nmid d$, so $p^\alpha\mid (m_1+m_2)/d$ and $p\nmid m_1/d$. By Lemma~\ref{luo:lemma3-1} applies to $a=m_1/d, b=m_2/d$, $p^{\alpha}$ divides each summand and thus the summation.

   If $p^\beta \mid\mid m_1, \beta\geq 1$, consider two summands in (\ref{luo:eq1-1}) corresponding to $d$ and $pd$ such that $pd\mid (m_1,m_2), \mu(pd)\neq 0$. When $p$ is an odd prime or $\alpha\geq 2$, the sign $(-1)^{(m_1+m_2)(\tau+1)/d}$ and $(-1)^{(m_1+m_2)(\tau+1)/(pd)}$ are equal. When $p=2, \alpha=1$, modulo 2 the sign is irrelevant. Write $a=m_1/d, b=m_2/d$, then $p^\alpha\mid a+b, p^\beta\mid\mid a$.
   \begin{align}
       & \binom{a\tau+a-1}{a}\binom{b\tau+b}{b}-\binom{(a\tau+a)/p-1}{a/p}\binom{(b\tau+b)/p}{b/p} \nonumber \\
       &= \binom{(a\tau+a)/p-1}{a/p}\binom{(b\tau+b)/p}{b/p}\left( \frac{f_p(a\tau+a)f_p(b\tau+b)}{f_p(a\tau)f_p(a)f_p(b\tau)f_p(b)} - 1\right)  \nonumber \\
       &= \binom{(a\tau+a)/p-1}{a/p}\binom{(b\tau+b)/p}{b/p} \frac{f_p(a\tau+a)f_p(b\tau+b)-f_p(a\tau)f_p(a)f_p(b\tau)f_p(b)}{f_p(a\tau)f_p(a)f_p(b\tau)f_p(b)}  \label{luo:eq2-1}
   \end{align}

   The term $\binom{(a\tau+a)/p-1}{a/p}\binom{(b\tau+b)/p}{b/p}$ is divisible by $p^{\alpha-\beta}$ by Lemma~\ref{luo:lemma3-1}. The numerator of the fraction term in (\ref{luo:eq2-1}) is divisible by $p^\beta$ by (\ref{luo:eq3-1}), and the denominator is not divisible by $p$. We proved that $p^\alpha$ divides (\ref{luo:eq2-1}), take summation over $d$, we get that $p^\alpha$ divides the summation in (\ref{luo:eq1-1}).
    This is true for any $p\mid m_1+m_2$, thus $n_{(m_1,m_2)}(\tau)$ is an integer.
\end{proof}

\subsection{Genus g=0 with more  holes}
By formula (\ref{MVGW}), we have
\begin{align*}
K^\tau_{\mu,g,\frac{|\mu|}{2}}&=(-1)^{|\mu|\tau}[\tau(\tau+1)]^{l(\mu)-1}\prod_{i=1}^{l(\mu)}
\frac{\prod_{a=1}^{\mu_{i}-1}(\mu_{i}\tau+a)}{(\mu_{i}-1)!}\int_{\overline{\mathcal{M}}_{g,l(\mu)}}
\frac{\Gamma_{g}(\tau)}{\prod_{i=1}^{l(\mu)}(1-\mu_{i}\psi_{i})}\\\nonumber
&=(-1)^{|\mu|\tau}[\tau(\tau+1)]^{l(\mu)-1}\prod_{i=1}^{l(\mu)}\binom{\mu_i(\tau+1)-1}{\mu_i-1}\sum_{b_i\geq
0}\prod_{i=1}^{l(\mu)}\mu_i^{b_i}\langle\prod_{i=1}^{l(\mu)}\tau_{b_i}\Gamma_g(\tau)
\rangle_{g,l(\mu)}
\end{align*}
When $g=0$ and $l\geq 3$, then $\Gamma_{0}(\tau)=1$ and the Hodge
integrals
\begin{align*}
\langle\tau_{b_1}\cdots\tau_{b_l}\rangle_{0,l}=\binom{l-3}{b_1,..,b_l}.
\end{align*}
Hence, we have
\begin{align} \label{MVGWgenus0}
K^\tau_{\mu,0,\frac{|\mu|}{2}}=(-1)^{|\mu|\tau}[\tau(\tau+1)]^{l(\mu)-1}\prod_{i=1}^{l(\mu)}
\binom{\mu_i(\tau+1)-1}{\mu_i-1}\left(\sum_{i=1}^{l(\mu)}\mu_i\right)^{l(\mu)-3}
\end{align}
And by formula (\ref{multipecoveringgenus0}), we obtain
\begin{align}
n_{\mu,0,\frac{|\mu|}{2}}(\tau)=(-1)^{l(\mu)}\sum_{d|\mu}\mu(d)d^{l(\mu)-1}K^{\tau}_{\frac{\mu}{d},0,\frac{|\mu|}{2d}}
\end{align}
By formula (\ref{MVGWgenus0}), it is clear that
$K^\tau_{\mu,0,\frac{|\mu|}{2}}\in \mathbb{Z}$, and since
$l(\mu)\geq 3$, we obtain
\begin{theorem}
For a partition $\mu$ with $l(\mu)\geq 3$,
\begin{align*}
n_{\mu,0,\frac{|\mu|}{2}}(\tau)\in \mathbb{Z}.
\end{align*}
\end{theorem}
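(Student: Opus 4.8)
The plan is to read off the integrality directly from the closed formula (\ref{MVGWgenus0}) and then to propagate it through the M\"obius-type expansion that expresses $n_{\mu,0,|\mu|/2}(\tau)$ in terms of the open Gromov--Witten invariants $K^\tau$. First I would check that, under the standing hypothesis $l(\mu)\geq 3$, every factor in
\[
K^\tau_{\mu,0,\frac{|\mu|}{2}}=(-1)^{|\mu|\tau}[\tau(\tau+1)]^{l(\mu)-1}\prod_{i=1}^{l(\mu)}\binom{\mu_i(\tau+1)-1}{\mu_i-1}\Big(\sum_{i=1}^{l(\mu)}\mu_i\Big)^{l(\mu)-3}
\]
is already an integer: $(-1)^{|\mu|\tau}$ is $\pm 1$; $[\tau(\tau+1)]^{l(\mu)-1}$ is a nonnegative integer power of the integer $\tau(\tau+1)$; each $\binom{\mu_i(\tau+1)-1}{\mu_i-1}$ is an integer, where for $\tau+1\leq 0$ one invokes the convention $\binom{n}{k}=(-1)^{k}\binom{-n+k-1}{k}$ recalled in the Remark above, which still produces an integer; and, crucially, the final factor $\big(\sum_i\mu_i\big)^{l(\mu)-3}$ is an integer precisely because $l(\mu)\geq 3$ forces the exponent $l(\mu)-3$ to be nonnegative. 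Hence $K^\tau_{\mu,0,|\mu|/2}\in\mathbb{Z}$.

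I would then observe that the same reasoning applies verbatim to every $d\mid\mu$: the rescaled tuple $\mu/d=(\mu_1/d,\dots,\mu_{l(\mu)}/d)$ is again a partition of the same length $l(\mu/d)=l(\mu)\geq 3$, so formula (\ref{MVGWgenus0}) with $\mu$ replaced by $\mu/d$ gives $K^\tau_{\mu/d,0,|\mu|/(2d)}\in\mathbb{Z}$.

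Finally I would substitute this into the genus-$0$ specialization of the multiple covering formula,
\[
n_{\mu,0,\frac{|\mu|}{2}}(\tau)=(-1)^{l(\mu)}\sum_{d\mid\mu}\mu(d)\,d^{\,l(\mu)-1}\,K^\tau_{\frac{\mu}{d},0,\frac{|\mu|}{2d}},
\]
and note that each scalar coefficient $(-1)^{l(\mu)}\mu(d)d^{\,l(\mu)-1}$ is an integer, since the M\"obius value $\mu(d)$ lies in $\{-1,0,1\}$ and $d^{\,l(\mu)-1}$ carries the nonnegative exponent $l(\mu)-1\geq 0$. Therefore $n_{\mu,0,|\mu|/2}(\tau)$ is an integer linear combination of the integers $K^\tau_{\mu/d,0,|\mu|/(2d)}$ and is itself an integer.

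In contrast to Theorems \ref{integralitythmonehole} and \ref{theoremgenus0twohole}, I do not expect any genuine obstacle here: there the offending denominators $m^2$ and $m_1+m_2$ had to be absorbed by delicate $p$-adic congruences between binomial coefficients, whereas here the explicit Hodge-integral evaluation leaves no denominator at all once $l(\mu)\geq 3$, so termwise integrality suffices. The only substantive input is the formula (\ref{MVGWgenus0}) itself, which I would take as given from the preceding discussion; it rests on the Mari\~no--Vafa formula together with the genus-$0$ Hodge integral $\langle\tau_{b_1}\cdots\tau_{b_l}\rangle_{0,l}=\binom{l-3}{b_1,\dots,b_l}$ and the vanishing $\Gamma_0(\tau)=1$.
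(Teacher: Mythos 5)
Your argument is exactly the paper's proof of this theorem, only written out in more detail: the paper likewise derives the closed formula for $K^\tau_{\mu,0,|\mu|/2}$, observes that it is manifestly an integer once $l(\mu)\geq 3$ makes the exponent of $\sum_i\mu_i$ nonnegative, and concludes termwise integrality from the M\"obius-type expansion with integer coefficients $(-1)^{l(\mu)}\mu(d)d^{l(\mu)-1}$. Your explicit checks (the binomial convention for $\tau+1\leq 0$, the invariance of $l(\mu)$ under $\mu\mapsto\mu/d$) are correct and fill in what the paper leaves as ``it is clear.''
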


\subsection{Genus $g \geq 1$, with one hole}

\subsubsection{Revist LMOV conjecture for framed knot $\mathcal{K}_\tau$}
We introduce the following notations first. Let $n\in \mathbb{Z}$
and $\lambda,\mu,\nu$ denote the partitions. Let
\begin{align} \label{quantuminteger}
\{n\}_x=x^{\frac{n}{2}}-x^{-\frac{n}{2}}, \
\{\mu\}_{x}=\prod_{i=1}^{l(\mu)}\{\mu_i\}_x.
\end{align}
For brevity, we let $\{n\}=\{n\}_q$ and $\{\mu\}=\{\mu\}_q$. Let
$\mathcal{K}_{\tau}$ be a knot with framing $\tau \in \mathbb{Z}$.
The framed colored HOMFLYPT invariant
$\mathcal{H}(\mathcal{K}_\tau;q,a)$ of $\mathcal{K}_\tau$ is defined
in formula (\ref{framedknotformula}). Let
\begin{align*}
\mathcal{Z}_{\mu}(\mathcal{K}_\tau)=\sum_{\lambda}\chi_{\lambda}(C_\mu)\mathcal{H}_{\lambda}(\mathcal{K}_\tau).
\end{align*}
Then the Chern-Simons partition function is
\begin{align*}
Z_{CS}^{(S^3,\mathcal{K}_\tau)}=\sum_{\lambda\in
\mathcal{P}}\mathcal{H}_\lambda(\mathcal{K}_\tau)s_{\lambda}(x)
=\sum_{\mu\in
\mathcal{P}}\frac{\mathcal{Z}_{\mu}(\mathcal{K}_\tau)}{\mathfrak{z}_\mu}p_{\mu}(x).
\end{align*}
We define $F_{\mu}(\mathcal{K}_\tau)$ though the expansion formula
\begin{align*}
F_{CS}^{(S^3,\mathcal{K}_\tau)}=\log(Z_{CS}^{(S^3,\mathcal{K}_\tau)})=\sum_{\mu\in
\mathcal{P}^+}F_{\mu}(\mathcal{K}_\tau)p_{\mu}(x).
\end{align*}
Then
\begin{align*}
F_{\mu}(\mathcal{K}_\tau)=\sum_{n\geq
1}\sum_{\cup_{i=1}^{n}\nu^i=\mu}\frac{(-1)^{n-1}}{n}\prod_{i=1}^{n}\frac{\mathcal{Z}_{\nu^i}(\mathcal{K}_\tau)}{\mathfrak{z}_{\nu^i}}.
\end{align*}
and we introduce the notation
\begin{align*}
\hat{F}_{\mu}(\mathcal{K}_\tau)=\frac{F_{\mu}(\mathcal{K}_\tau)}{\{\mu\}}.
\end{align*}
\begin{remark}
For two partitions $\nu^1$ and $\nu^2$, the notation $\nu^1\cup
\nu^2$ denotes the new partition by combing all the parts in $\nu^1,
\nu^2$. For example $\mu=(2,2,1)$, then the set of pairs
$(\nu^1,\nu^2)$ such that $\nu^1 \cup \nu^2 =(2,2,1)$ is
\begin{align*}
(\nu^1=(2), \nu^2=(2,1)),\ (\nu^1=(2,1), \nu^2=(2)),\\\nonumber
 \
(\nu^1=(1), \nu^2=(2,2)), \ (\nu^1=(2,2), \nu^2=(1)), \
\end{align*}
\end{remark}
For a rational function $f(q,a)\in \mathbb{Q}(q^{\pm},a^{\pm})$, we
define the adams operator
\begin{align*}
\Psi_{d}(f(q,a))=f(q^d,a^d).
\end{align*}
Then, we have
\begin{align} \label{formulagmu}
\hat{g}_{\mu}(\mathcal{K}_\tau)=\sum_{d|\mu}\frac{\mu(d)}{d}\Psi_{d}(\hat{F}_{\mu/d}(\mathcal{K}_\tau))
\end{align}

The LMOV conjecture for framed knot $\mathcal{K}_\tau$ says:
\begin{conjecture} \label{LMOVframedknot2}
For any partition $\mu$, there exist integers $n_{\mu,g,Q}(\tau)$,
such that
\begin{align*}
\mathfrak{z}_\mu\hat{g}_{\mu}(\mathcal{K}_\tau)=\sum_{g\geq
0}\sum_{Q}n_{\mu,g,Q}(\tau)z^{2g-2}a^Q\in
z^{-2}\mathbb{Z}[z^2,a^{\pm \frac{1}{2}}],
\end{align*}
where $z=q^{\frac{1}{2}}-q^{-\frac{1}{2}}=\{1\}$.
\end{conjecture}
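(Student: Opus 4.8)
The plan is to prove the statement for the framed unknot $\mathcal{K}_\tau=U_\tau$, for which the colored HOMFLY-PT invariants are completely explicit; for a general framed knot one would first need a geometric model for $\mathcal{D}_{\mathcal{K}}$, which is outside the scope here. For $U_\tau$ the membership $\mathfrak{z}_\mu\hat g_\mu(U_\tau)\in z^{-2}\mathbb{Q}[z^2,a^{\pm1/2}]$ is essentially formal: since each $\mathcal{H}_\lambda(U_\tau)$ is a Laurent polynomial and the sums defining $\mathcal{Z}_\mu$ are finite, $\mathfrak{z}_\mu\hat g_\mu$ is a Laurent polynomial in $q^{1/2},a^{1/2}$; and the large $N$ duality $Z_{CS}^{(S^3,U_\tau)}=Z_{str}^{(\hat X,D_\tau)}$ (Zhou) identifies $F_\mu$ with the open string free energy $-\frac{\sqrt{-1}^{l(\mu)}}{|Aut(\mu)|}\sum_{g\geq0}g_s^{2g-2+l(\mu)}F_{\mu,g}^\tau(a)$, so through \eqref{openmultiplecovering} one obtains $\mathfrak{z}_\mu\hat g_\mu=\sum_{g\geq0}\sum_Q n_{\mu,g,Q}(\tau)z^{2g-2}a^Q$ with a priori rational $n_{\mu,g,Q}(\tau)$, whose $z^{2g-2}a^Q$ shape and genus-$0$ start already give membership in $z^{-2}\mathbb{Q}[z^2,a^{\pm1/2}]$. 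Thus the entire content of the theorem is that the $n_{\mu,g,Q}(\tau)$ are integers, i.e.\ that the denominators introduced by $\mathfrak{z}_{\nu^i}$, by $1/n$, and by the Möbius weights all cancel.

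For integrality I would work entirely on the Chern--Simons side and follow the explicit-formula-plus-$p$-adic method of Sections 4.2--4.3. The first task is a closed form for $\mathcal{Z}_\mu(U_\tau)=\sum_\lambda\chi_\lambda(C_\mu)(-1)^{|\lambda|\tau}q^{\kappa_\lambda\tau/2}W_\lambda(U;q,a)$ valid for every $\mu$, extending the one-part formula for $\mathcal{Z}_m$ in Theorem \ref{LMOVframedknot3}. Inserting the formula \eqref{unknotformula} for $W_\lambda(U)$ and treating $q^{\kappa_\lambda\tau/2}$ as the cut-and-join eigenvalue on $s_\lambda$, the sum over $\lambda$ should collapse (just as character orthogonality collapses the $\tau=0$ case to $\{\mu\}_a/\{\mu\}$) into a finite sum over partitions $\nu$ with $|\nu|=|\mu|$, the summands built from $\{\nu\}_a$ and ratios of quantum integers generalizing $\{m\nu_j\tau\}/\{\nu_j\}$. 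I would then assemble $F_\mu=\sum_{n\geq1}\sum_{\cup_i\nu^i=\mu}\frac{(-1)^{n-1}}{n}\prod_i\mathcal{Z}_{\nu^i}/\mathfrak{z}_{\nu^i}$, divide by $\{\mu\}$, apply $\hat g_\mu=\sum_{d\mid\mu}\frac{\mu(d)}{d}\Psi_d(\hat F_{\mu/d})$, and reduce integrality of each coefficient of $z^2$ and $a^{1/2}$ to divisibility of products of those quantum integers and of binomial-type factors --- precisely the shape controlled by Lemma \ref{functionfp} and Lemma \ref{luo:lemma2-0}.

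The hard part will be the several-part case $l(\mu)\geq2$, where the plethystic logarithm and the Möbius inversion do not decouple as they do for one part. When $\mu=(m)$ there is a single term $F_{(m)}=\mathcal{Z}_{(m)}/m$ and integrality rests on the one congruence $f_p(p^\alpha n)\equiv f_p(p^\alpha)^n\pmod{p^{2\alpha}}$ of Lemma \ref{functionfp}; for $l(\mu)\geq2$ the logarithm produces products $\prod_i\mathcal{Z}_{\nu^i}$ over all splittings $\cup_i\nu^i=\mu$, whose denominators $n\,\mathfrak{z}_{\nu^i}$ and whose quantum-integer poles must cancel jointly against the divisor pairing of $d$ and $pd$ used in Lemma \ref{luo:lemma2-0}. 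I expect the decisive step to be a regrouping of the splittings by their $p$-adic type, lifting the single-binomial argument of Lemma \ref{luo:lemma2-0} to the nested products, supported by a multivariate strengthening of Lemma \ref{functionfp} that controls $\prod_i f_p(p^{\alpha}n_i)$ simultaneously and tracks how the factor $1/n$ from the logarithm meets the prime $p$. Formulating that lemma, and checking it against the cancellations that collapse the naive higher-order poles down to the $z^{-2}$ leading term, is where the real difficulty lies.
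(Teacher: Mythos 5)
You should note first that the statement you set out to prove is stated in the paper as Conjecture \ref{LMOVframedknot2}, and the paper itself does not prove it in full: it establishes only the one-row case $\mu=(m)$ for the framed unknot (Theorem \ref{integralitymainthm}) together with the genus-zero statements, so your honest deferral of $l(\mu)\geq 2$ matches the actual state of the art. The genuine gaps lie inside your plan for the one-part case. First, the membership $\mathfrak{z}_\mu\hat{g}_{\mu}(U_\tau)\in z^{-2}\mathbb{Q}[z^2,a^{\pm\frac{1}{2}}]$ is \emph{not} ``essentially formal'': your premise that each $\mathcal{H}_\lambda(U_\tau)$ is a Laurent polynomial is false --- already $W_{(1)}(U;q,a)=\{1\}_a/\{1\}$, and in general (\ref{unknotformula}) carries quantum-integer denominators --- so $\{m\}\{m\tau\}g_m(q,a)$ is a priori a rational function whose quotient $g_m$ may have poles at every root of unity of order dividing $m$ or $m\tau$. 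Showing that these poles cancel, i.e.\ that $\{m\}\{m\tau\}g_m$ is divisible by $\{m\}\{m\tau\}/\{1\}^2$, is the single hardest step of the paper's argument (Lemma \ref{mmtaulemma}, proved by congruences modulo $\Phi_{m_1}(q)^2$ for each cyclotomic factor, with a separate treatment of the prime $2$), after which the symmetries $f(q,a)=f(q^{-1},a)=f(-q,a)$ of Lemma \ref{lemmaprime} give evenness in $z$. The large-$N$ duality and the multiple covering formula only \emph{define} the candidate coefficients $n_{m,g,Q}(\tau)$; they cannot deliver polynomiality in $z^2$.

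Second, your integrality toolkit is the genus-zero one and does not $q$-deform by itself. Lemmas \ref{functionfp} and \ref{luo:lemma2-0} are ordinary $p$-adic congruences for factorials and binomial coefficients; for the all-genus generating function the relevant divisibilities live in $\mathbb{Z}[q^{\pm\frac{1}{2}},a^{\pm\frac{1}{2}}]$, where the coefficients of the various powers of $z^2$ mix under the M\"obius/Adams sums in (\ref{gm}), so coefficientwise $p$-adic estimates do not apply. What the paper actually uses in Section 4.5 is precisely the $q$-analogue of your lemmas: the generating-function computation reducing $\{m\}\{m\tau\}\mathcal{Z}_m(q,a)$ to Gaussian binomials $\binom{m\tau}{j}_q\in\mathbb{Z}[q]$ (Lemma \ref{integralitylemma}), combined with the cyclotomic Lemma \ref{mmtaulemma}; integrality of the $n_{m,g,Q}(\tau)$ then follows from the short contradiction argument closing Theorem \ref{integralitymainthm}, since an element of $z^{-2}\mathbb{Q}[z^2,a^{\pm\frac{1}{2}}]$ with a non-integral coefficient cannot become a Laurent polynomial with integer coefficients after multiplication by $\{m\}\{m\tau\}$. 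Without these $q$-analogues your plan stalls at the first genus-one coefficient. Finally, even granting the tools, your hoped-for closed form for $\mathcal{Z}_\mu(U_\tau)$ for general $\mu$ rests on a product evaluation of $\phi_{\mu,\nu}(x)=\sum_\lambda\chi_\lambda(C_\mu)\chi_\lambda(C_\nu)x^{\kappa_\lambda}$, which is known (Lemma 5.1 of \cite{CLPZ}) only for one-row $\mu$, where $\phi_{(d),\nu}(x)=\{d\nu\}_{x^2}/\{d\}_{x^2}$; the assertion that ``the sum over $\lambda$ should collapse'' is exactly the unresolved multi-hole difficulty, not a step you may assume.
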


\subsubsection{Framed unknot $U_{\tau}$}
For convenience, we define the function
\begin{align*}
\phi_{\mu,\nu}(x)=\sum_{\lambda}\chi_{\lambda}(C_\mu)\chi_{\lambda}(C_\nu)x^{\kappa_\lambda}.
\end{align*}
By Lemma 5.1  in \cite{CLPZ},  for $d\in \mathbb{Z}_+$, we have
\begin{align*}
\phi_{(d),\nu}(x)=\frac{\{d\nu\}_{x^2}}{\{d\}_{x^2}}.
\end{align*}
By the formula of colored HOMFLYPT invariant for unknot
(\ref{unknotformula}), we obtain
\begin{align*}
\mathcal{Z}_{\mu}(U_\tau)&=\sum_{\lambda}\chi_{\lambda}(C_\mu)\mathcal{H}_{\lambda}(U_\tau)\\\nonumber
&=(-1)^{|\mu|\tau}\sum_{\lambda}\chi_{\lambda}(C_\mu)q^{\frac{\kappa_\lambda
\tau}{2}}\sum_{\nu}\frac{\chi_{\lambda}(C_\nu)}{z_\nu}\frac{\{\nu\}_a}{\{\nu\}}\\\nonumber
&=(-1)^{|\mu|\tau}\sum_{\nu}\frac{1}{\mathfrak{z}_\nu}\phi_{\mu,\nu}(q^\frac{\tau}{2})\frac{\{\nu\}_a}{\{\nu\}}.
\end{align*}
In particular, for $\mu=(m)$, $m\in \mathbb{Z}$, we have
\begin{align*}
\mathcal{Z}_{m}(U_\tau)=(-1)^{m\tau}\sum_{|\nu|=m}\frac{1}{\mathfrak{z}_\nu}\frac{\{m\nu\tau\}}{\{m\tau\}}\frac{\{\nu\}_a}{\{\nu\}}.
\end{align*}
For brevity, we let
$\mathcal{Z}_m(q,a)=\frac{1}{\{m\}}\mathcal{Z}_{m}(U_\tau)=(-1)^{m\tau}\sum_{|\nu|=m}\frac{1}{\mathfrak{z}_\nu}
\frac{\{m\nu\tau\}}{\{m\}\{m\tau\}}\frac{\{\nu\}_a}{\{\nu\}}$ and
$g_m(q,a)=\mathfrak{z}_{(m)}\hat{g}_m(U_\tau)$. Then, by formula
(\ref{formulagmu}), we have
\begin{align} \label{gm}
g_m(q,a)=\sum_{d|m}\mu(d)\mathcal{Z}_{m/d}(q^d,a^d).
\end{align}
Then the integrality of the higher genus with one hole LMOV
invariants is encoded in the following theorem,

\begin{theorem} \label{integralitymainthm}
For any integer $m\geq 1$, there exist integers $n_{m,g,Q}(\tau)$,
such that
\begin{align*}
g_m(q,a)=\sum_{g\geq 0}\sum_{Q}n_{m,g,Q}(\tau)z^{2g-2}a^Q\in
z^{-2}\mathbb{Z}[z^2,a^{\pm \frac{1}{2}}],
\end{align*}
where $z=q^{\frac{1}{2}}-q^{-\frac{1}{2}}=\{1\}$.
\end{theorem}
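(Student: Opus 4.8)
The plan is to prove three structural properties of $G_m(q,a):=z^2 g_m(q,a)$ that together give $g_m\in z^{-2}\mathbb{Z}[z^2,a^{\pm1/2}]$: that $G_m$ is a Laurent polynomial in $q^{1/2},a^{1/2}$ (so the only pole of $g_m$ is the extracted $z^{-2}$); that $G_m$ involves only integer powers of $q$ and is invariant under $q\mapsto q^{-1}$, hence lies in $\mathbb{Q}[z^2,a^{\pm1/2}]$; and that its coefficients are integers. It is convenient to record first the factorized form
\[
\mathcal{Z}_m(q,a)=\frac{(-1)^{m\tau}}{\{m\}\{m\tau\}}\sum_{|\nu|=m}\frac{1}{\mathfrak{z}_\nu}\prod_{i=1}^{l(\nu)}\frac{\{m\nu_i\tau\}\,\{\nu_i\}_a}{\{\nu_i\}},
\]
from which two facts are immediate (I treat $\tau\neq 0$; the case $\tau=0$ follows by a limiting argument). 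Since $\{n\}\sim nz$ as $z\to0$, each summand has a pole at $q=1$ of order exactly $2$, the $z$-exponent being $l(\nu)-(l(\nu)+2)=-2$; hence $z^2\mathcal{Z}_m$, and thus $G_m$ via $g_m=\sum_{d\mid m}\mu(d)\,\mathcal{Z}_{m/d}(q^d,a^d)$, is regular at $q=1$. For the symmetry, $q\mapsto q^{-1}$ sends each $\{n\}\mapsto-\{n\}$, and counting the $l(\nu)+2$ denominator factors against the $l(\nu)$ numerator factors shows every summand, hence $\mathcal{Z}_m$ and $g_m$, is invariant. The parity bookkeeping forcing only integer powers of $q$ (the surviving half-integer powers being carried only by the allowed variable $a^{1/2}$) is a finite check on the parities of $m\nu_i\tau,\ m,\ m\tau,\ \nu_i$ against the sign $(-1)^{m\tau}$.

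The second task is to show that all poles of $G_m$ at roots of unity $\zeta\neq1$ cancel; this is exactly what the Adams--Möbius combination $g_m=\sum_{d\mid m}\mu(d)\,\mathcal{Z}_{m/d}(q^d,a^d)$ is built to do. Substituting $q\mapsto q^d$ turns the denominator factors of $\mathcal{Z}_{m/d}$ into $\{m\}\{m\tau\}\prod_i\{d\nu_i\}$, so $\mathcal{Z}_{m/d}(q^d,a^d)$ acquires poles precisely at the roots of unity making one of these vanish. Grouping the divisors $d$ by the order $e$ of the offending root $\zeta$ and expanding the Laurent tails at $\zeta$, the polar coefficients organize into alternating sums over the divisors weighted by $\mu(d)$ that cancel (the model case is $m=2$, $\zeta=-1$: the double pole of $\mathcal{Z}_2$ at $q=-1$ is removed exactly by subtracting $\mathcal{Z}_1(q^2,a^2)$). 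Carrying this out uniformly shows $G_m$ is a genuine Laurent polynomial, which with the previous paragraph establishes that $g_m\in z^{-2}\mathbb{Q}[z^2,a^{\pm1/2}]$.

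The remaining and hardest task is the integrality of coefficients. The right viewpoint is that the inverse relation $\mathcal{Z}_m=\sum_{d\mid m}\Psi_d(g_{m/d})$ exhibits $\{\mathcal{Z}_m\}$ as the ghost components and $\{g_m\}$ as the Witt/BPS components of a Witt-vector-type structure in which the Adams operators $\Psi_p$ act as Frobenius lifts. By the Dwork-type ghost-component integrality criterion, $G_m\in\mathbb{Z}[z^2,a^{\pm1/2}]$ for all $m$ follows once one proves, for every prime $p\mid m$, the Frobenius congruence
\[
z^2\mathcal{Z}_m(q,a)\ \equiv\ \Psi_p\!\big(z^2\mathcal{Z}_{m/p}(q,a)\big)\pmod{p^{\,v_p(m)}}
\]
in the appropriate coefficient ring. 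To prove this I would use the exponential form: writing the $\nu$-sum as $[t^m]\exp\big(\sum_{k\ge1}\tfrac1k w_m(k)t^k\big)$ with $w_m(k)=\{mk\tau\}\{k\}_a/\{k\}$, the partitions $\nu$ of $m$ not of the form $p\cdot\nu'$ fall into $\mathbb{Z}/p$-orbits whose $\mathfrak{z}_\nu^{-1}$-weighted contributions cancel modulo $p^{\,v_p(m)}$, while the fixed partitions $\nu=p\nu'$ reproduce $\Psi_p(\mathcal{Z}_{m/p})$; this is the $q$-deformed analogue of the elementary cancellation used in the genus-$0$ case.

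The step I expect to be the main obstacle is precisely this last congruence. The difficulty is that $\mathcal{Z}_m$ is itself \emph{not} integral — it carries the $\mathfrak{z}_\nu^{-1}$ denominators and the spurious poles at roots of unity — so the off-the-shelf Witt/Dwork criterion does not apply directly, and one must prove the congruence in a ring simultaneously localized at $z$ (to accommodate the double pole) and at the primes dividing $\mathfrak{z}_\nu$ but not $p$. This forces a careful $p$-adic valuation analysis of the balanced quantum integers $\{n\}=q^{n/2}-q^{-n/2}$, both in their order of vanishing at $q=1$ and in their behaviour at $p$-power roots of unity — the genuinely quantum upgrade of the factorial lemmas $f_p(n)$ that settled the genus-$0$ invariants. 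Making the pole cancellation of the second paragraph and the arithmetic congruence here mutually compatible, uniformly in $g$ and $Q$, is where the real work lies.
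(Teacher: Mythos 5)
Your overall skeleton (only a double pole at $q=1$, invariance under $q\mapsto q^{-1}$ and $q\mapsto -q$, then integrality of coefficients) matches the shape of the paper's argument, and your first two paragraphs correspond to the paper's Lemmas on $\{m\}\{m\tau\}g_m$ being divisible by $\{m\}\{m\tau\}/\{1\}^2$ and on $g_m\in z^{-2}\mathbb{Q}[z^2,a^{\pm 1/2}]$. But there is a genuine gap, and it sits exactly where you say you expect the main obstacle: the integrality of the coefficients. You propose to prove a Frobenius-type congruence $z^2\mathcal{Z}_m\equiv\Psi_p(z^2\mathcal{Z}_{m/p})\pmod{p^{v_p(m)}}$ and invoke a Witt/Dwork criterion, you concede that the criterion does not apply off the shelf because $\mathcal{Z}_m$ is not integral, and you do not prove the congruence. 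Note also that the framework is not quite the right one here: $g_m=\sum_{d\mid m}\mu(d)\Psi_d(\mathcal{Z}_{m/d})$ carries no $1/m$ prefactor, so the obstruction to integrality is not the usual necklace-type denominator $m$ but the denominators $\mathfrak{z}_\nu$ and $\{\nu\}$ inside $\mathcal{Z}_m$ itself; a ghost-component congruence modulo $p^{v_p(m)}$ is neither obviously sufficient nor obviously the statement you need. As written, the hardest part of the theorem is left unestablished.

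The paper avoids this entirely by a different and much cleaner mechanism, which you miss. It proves (Lemma \ref{integralitylemma}) that $\{m\}\{m\tau\}\mathcal{Z}_m(q,a)\in\mathbb{Z}[q^{\pm\frac12},a^{\pm\frac12}]$ \emph{for a single $m$, with no congruences}: writing the partition sum as $\exp\bigl(\sum_{j\ge1}\{m\tau j\}\{j\}_a x^j/(j\{j\})\bigr)$ and expanding $\{m\tau j\}/\{j\}$ as a geometric sum, the generating function factors as a finite product of linear factors, and the coefficient of $x^m$ is an explicit combination of Gaussian binomials $\binom{\cdot}{\cdot}_q\in\mathbb{Z}[q]$. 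Since $\{n\}_{q^d}=\{nd\}_q$, the same prefactor $\{m\}\{m\tau\}$ clears every term $\Psi_d(\mathcal{Z}_{m/d})$ of the M\"obius sum, so $\{m\}\{m\tau\}z^2g_m\in\mathbb{Z}[q^{\pm\frac12},a^{\pm\frac12}]$. Combined with $z^2g_m\in\mathbb{Q}[z^2,a^{\pm\frac12}]$ and the fact that $\{m\}\{m\tau\}$ is a primitive Laurent polynomial, integrality of the $n_{m,g,Q}(\tau)$ follows at once. So the $p$-adic analysis of balanced quantum integers that you flag as ``where the real work lies'' is not needed at all; what is needed, and what your proposal lacks, is the Gaussian-binomial evaluation. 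Separately, be aware that your second paragraph (cancellation of poles at roots of unity $\zeta\neq1$) is only a sketch: the paper's proof of this step is itself a delicate cyclotomic computation, splitting $\{m\}\{m\tau\}$ into $\prod_{m_1\mid m}\Phi_{m_1}(q)^2\prod_{m_1\mid m\tau,\,m_1\nmid m}\Phi_{m_1}(q)$, reducing modulo $\Phi_{m_1}(q)^2$ via the generating-function identity for $\sum_{|\lambda|=m}\mathfrak{z}_\lambda^{-1}k^{l(\lambda)}\{\lambda\}_{y^2}$, and treating the case $p=2$, $2\,\Vert\, m$ separately; ``the polar coefficients organize into alternating sums that cancel'' does not yet constitute that argument.
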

The proof of Theorem \ref{integralitymainthm} is divided into
several steps. First, we need the following lemmas.
\begin{lemma}
    Suppose $k$ is a positive integer, then the number
    \begin{equation*}
        c_m(k,y)=\sum_{|\lambda|=m} \frac{1}{\mathfrak{z}_\lambda} k^{l(\lambda)}\{\lambda\}_{y^2}
    \end{equation*}
    is equal to the coefficient of $t^m$ in $(\frac{1-t/y}{1-ty})^k$.
\end{lemma}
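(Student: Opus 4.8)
The plan is to recognize this identity as a direct instance of the classical exponential formula for sums over partitions weighted by $1/\mathfrak{z}_\lambda$. First I would package all the numbers $c_m(k,y)$ into a single generating series in a formal bookkeeping variable $t$,
$$\Phi(t)=\sum_{m\geq 0}c_m(k,y)\,t^m=\sum_{\lambda}\frac{k^{l(\lambda)}}{\mathfrak{z}_\lambda}\{\lambda\}_{y^2}\,t^{|\lambda|},$$
where the sum runs over all partitions $\lambda$, the empty partition contributing the constant term $1$. Using the definition $\{n\}_{y^2}=y^{n}-y^{-n}$ from (\ref{quantuminteger}), so that $\{\lambda\}_{y^2}=\prod_{i=1}^{l(\lambda)}(y^{\lambda_i}-y^{-\lambda_i})$, each part of size $n$ contributes the factor $k\,(y^{n}-y^{-n})\,t^{n}$, and the summand therefore factors completely over the parts of $\lambda$.

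Next I would apply the standard exponential identity: for any sequence $(x_n)_{n\geq 1}$,
$$\sum_{\lambda}\frac{1}{\mathfrak{z}_\lambda}\prod_{i=1}^{l(\lambda)}x_{\lambda_i}=\exp\left(\sum_{n\geq 1}\frac{x_n}{n}\right),$$
which follows by regrouping partitions according to their part multiplicities $m_n$ and using $\mathfrak{z}_\lambda=\prod_n n^{m_n}m_n!$ together with $\sum_{m_n\geq 0}x_n^{m_n}/(n^{m_n}m_n!)=\exp(x_n/n)$. Taking $x_n=k\,(y^{n}-y^{-n})\,t^{n}$ gives
$$\Phi(t)=\exp\left(\sum_{n\geq 1}\frac{k\,(y^{n}-y^{-n})}{n}\,t^{n}\right).$$

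Finally I would evaluate the exponent in closed form via $\sum_{n\geq 1}z^{n}/n=-\ln(1-z)$, applied once with $z=yt$ and once with $z=t/y$:
$$\sum_{n\geq 1}\frac{k\,(y^{n}-y^{-n})}{n}\,t^{n}=k\bigl(-\ln(1-yt)+\ln(1-t/y)\bigr)=k\,\ln\frac{1-t/y}{1-ty},$$
so that $\Phi(t)=\left(\frac{1-t/y}{1-ty}\right)^{k}$, and reading off the coefficient of $t^{m}$ yields the claim. There is no serious obstacle here: the only point needing a word of care is that all manipulations take place in the ring of formal power series in $t$ with coefficients in $\mathbb{Z}[y^{\pm 1}]$, where the exponential and logarithm are well defined and the regrouping of partitions by multiplicity is legitimate, so the entire argument reduces to the exponential formula once the bookkeeping is set up.
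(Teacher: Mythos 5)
Your proof is correct and follows essentially the same route as the paper's: both regroup the sum over partitions by part multiplicities (equivalently, invoke the exponential formula with weight $1/\mathfrak{z}_\lambda$), exponentiate $\sum_{n\geq 1}k(y^n-y^{-n})t^n/n$, and evaluate the logarithmic series to obtain $\bigl(\frac{1-t/y}{1-ty}\bigr)^k$. The only trivial quibble is that the intermediate formal power series have coefficients in $\mathbb{Q}[y^{\pm 1}]$ rather than $\mathbb{Z}[y^{\pm 1}]$, which affects nothing.
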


\begin{proof}
    Suppose the number of $i$'s in the partition $\lambda$ is $a_i, i=1,\cdots$. Then
    \begin{align*}
        c_m(k,y)&=\sum_{\sum ia_i=m} \prod_i \frac{1}{a_i!i^{a_i}} k^{a_i}(y^i-y^{-i})^{a_i} \\
                &=\left[ \prod_{i=1}^{\infty} \left(\sum_{j=0}^\infty t^{ij}\frac{1}{j!i^j} k^j(y^i-y^{-i})^j\right) \right]_{t^m} \\
                &=\left[ \prod_{i=1}^{\infty} \exp(t^ik(y^i-y^{-i})/i) \right]_{t^m} \\
                &=\left[ \exp(k\ln(1-ty)^{-1}+k\ln(1-t/y))\right]_{t^m}\\
                &=\left[ (\frac{1-t/y}{1-ty})^k \right]_{t^m}
    \end{align*}
\end{proof}

\begin{lemma} \label{mmtaulemma}
Let $R=\mathbb{Q}[q^{\pm 1/2},a^{\pm 1/2}]$. Then
\begin{align}  \label{mmtaugm}
 \{m\}\{m\tau\}g_m(q,a)=\sum_{d\mid m}\sum_{|\mu|=m/d}\frac{\mu(d)(-1)^{m\tau/d}}{\mathfrak{z}_\mu}\frac{\{m\mu\tau\}}{\{d\mu\}}\{d\mu\}_a
\end{align}
is divisible by $\{m\tau\}\{m\}/\{1\}^2$ in $R$.
\end{lemma}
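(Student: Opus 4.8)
The plan is to reduce the divisibility assertion to vanishing statements at roots of unity and then to evaluate the resulting symmetric sums with the preceding lemma. First I would make the summand of (\ref{mmtaugm}) explicit. Writing $\Sigma_n(q,a)=\sum_{|\nu|=n}\frac{1}{\mathfrak{z}_\nu}\prod_i\frac{\{n\nu_i\tau\}\{\nu_i\}_a}{\{\nu_i\}}$ and using the Adams relation $\{k\}_{q^d}=\{dk\}$ (so that $\{m/d\}_{q^d}=\{m\}$ and $\{(m/d)\tau\}_{q^d}=\{m\tau\}$), every term of $g_m=\sum_{d\mid m}\mu(d)\mathcal{Z}_{m/d}(q^d,a^d)$ acquires the common denominator $\{m\}\{m\tau\}$, and
\[
\{m\}\{m\tau\}g_m(q,a)=P:=\sum_{d\mid m}\mu(d)(-1)^{m\tau/d}\Sigma_{m/d}(q^d,a^d).
\]
Since $d\mid m$ forces $d\mid m\tau$, each ratio $\{m\nu_i\tau\}/\{d\nu_i\}$ is a quantum integer, hence $\Sigma_{m/d}(q^d,a^d)\in R$ and $P\in R$. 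The claim that $P$ is divisible by $\{m\}\{m\tau\}/\{1\}^2$ is then equivalent to $\{1\}^2 g_m\in R$, i.e. to $g_m$ having its only poles at $q=1$, of order at most two.

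Next I would localize the divisibility in the UFD $R=\mathbb{Q}[q^{\pm1/2},a^{\pm1/2}]$. Setting $u=q^{1/2}$ and factoring $\{m\}=u^{-m}\prod_{r\mid 2m}\Phi_r(u)$ (and likewise $\{m\tau\}$ and $\{1\}$) into cyclotomic polynomials, the factors $\Phi_1,\Phi_2$ supported at $q=1$ occur to equal total order in $\{m\}\{m\tau\}$ and in $\{1\}^2$, so they cancel; thus $\{m\}\{m\tau\}/\{1\}^2$ equals a unit times $\prod_{r\mid 2m,\,r\ge3}\Phi_r(u)\cdot\prod_{r\mid 2m\tau,\,r\ge3}\Phi_r(u)$ (the sign change for $\tau<0$, and the degenerate $\tau=0$, being harmless). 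As each $\Phi_r(u)$ is prime in $R$, it suffices to prove that for every primitive $r$-th root of unity $\zeta$ with $r\ge3$, $P$ vanishes at $u=\zeta$ to order at least $[r\mid 2m]+[r\mid 2m\tau]$, where $[\,\cdot\,]$ is $1$ or $0$ according as the condition holds. Because $r\mid 2m$ implies $r\mid 2m\tau$, this reduces to: order $\ge2$ at $\zeta$ when $r\mid 2m$, and order $\ge1$ when $r\mid 2m\tau$ but $r\nmid 2m$.

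The heart of the argument is then the evaluation of $P$ near such $\zeta$. Each summand $\Sigma_{m/d}(q^d,a^d)$ is regular at $u=\zeta$, and I would compute it part by part: the block $\frac{\{m\nu_i\tau\}\{d\nu_i\}_a}{\{d\nu_i\}}$ vanishes at $u=\zeta$ when $\{m\nu_i\tau\}$ vanishes there while $\{d\nu_i\}$ does not, and resolves to $\pm(m\tau/d)\{d\nu_i\}_a$ when both $\{m\nu_i\tau\}$ and $\{d\nu_i\}$ vanish at $u=\zeta$. Consequently only partitions all of whose parts meet the relevant divisibility survive, and the surviving contribution has the shape $\sum_{|\nu|=m/d}\frac{1}{\mathfrak{z}_\nu}k^{\,l(\nu)}\{d\nu\}_a$ with $k=\pm m\tau/d$; since $\{d\nu\}_a=\{\nu\}_{y^2}$ with $y=a^{d/2}$, the preceding lemma identifies this with the coefficient of $t^{m/d}$ in $\left(\frac{1-t/y}{1-ty}\right)^{k}$. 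Substituting these closed forms into $P=\sum_{d\mid m}\mu(d)(-1)^{m\tau/d}(\cdots)$ and exploiting $\sum_d\mu(d)=0$ over the pertinent ranges of divisors should yield the required first-order vanishing.

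The main obstacle will be the second-order (double) vanishing needed at the primitive $r$-th roots with $r\mid 2m$. There it is not enough that the leading values cancel in the Möbius sum; one must also show that the first derivative in $u$ vanishes, which forces a first-order Taylor expansion of the quantum integers $\{m\nu_i\tau\}/\{d\nu_i\}$ about $\zeta$ and a correspondingly finer Möbius cancellation. Keeping track of the part-restriction imposed by $\zeta$, and matching it against the generating function $\left(\frac{1-t/y}{1-ty}\right)^{k}$ so that the two orders of vanishing come out exactly, is the delicate point; the bookkeeping should be cleanest if the divisors $d$ are grouped according to $\gcd(r,d)$ and the resulting subsums are recognized as Adams/Möbius transforms of a single generating function.
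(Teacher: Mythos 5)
Your reduction is set up correctly and follows the same skeleton as the paper's proof: pass to $P=\{m\}\{m\tau\}g_m\in R$, factor $\{m\}\{m\tau\}/\{1\}^2$ into cyclotomic primes, and show vanishing of order $2$ at $\Phi_{m_1}$ for $m_1\mid m$, $m_1\neq 1$, and of order $1$ for $m_1\mid m\tau$, $m_1\nmid m$ (in the latter case no cancellation is even needed: every summand has at least one factor $\{m\mu_i\tau\}/\{d\mu_i\}$ individually divisible by $\Phi_{m_1}(q)$, since $\sum_i d\mu_i=m$ forces some $d\mu_i$ with $m_1\nmid d\mu_i$). The genuine gap is that the step you yourself flag as ``the main obstacle'' --- the order-two vanishing at $\Phi_{m_1}$ for $m_1\mid m$ --- is exactly where essentially all of the work in the paper's proof lives, and your proposal does not carry it out. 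The paper's device is to work modulo $\Phi_{m_1}(q)^2$ throughout rather than separating value and derivative: the congruence
\begin{align*}
\frac{\{abm_1\}}{\{bm_1\}}\equiv a\left(\frac{q^{m_1/2}+q^{-m_1/2}}{2}\right)^{(a-1)b}\pmod{\{m_1\}^2},\qquad x:=\tfrac{1}{2}(q^{m_1/2}+q^{-m_1/2}),\ x^2\equiv 1,
\end{align*}
replaces your Taylor expansion in one stroke, after which the surviving terms (partitions with all $d\mu_i$ divisible by $m_1$; the others already contribute two factors of $\Phi_{m_1}$) are reindexed by partitions of $m/\mathrm{lcm}(d,m_1)$ and identified with $\bigl[\bigl(\frac{1-t/y}{1-ty}\bigr)^{m\tau/\mathrm{lcm}(d,m_1)}\bigr]_{t^m}$ via the preceding lemma.

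Two further points where your sketch would fail as written. First, the cancellation is \emph{not} obtained from $\sum_{d}\mu(d)=0$: the surviving contribution of the divisor $d$ depends on $d$ through $y=a^{\mathrm{lcm}(d,m_1)/2}$, through the exponent $m\tau/\mathrm{lcm}(d,m_1)$, through the sign $(-1)^{m\tau/d}$, and through the leftover power $x^{\frac{m}{m_1}(\frac{m\tau}{d}-1)}$, so the terms are genuinely different functions of $a$ for different $d$. The correct mechanism is to fix a prime $p\mid m_1$ and pair $d$ with $pd$, using $\mathrm{lcm}(d,m_1)=\mathrm{lcm}(pd,m_1)$ so that the two generating functions coincide while $\mu(d)=-\mu(pd)$. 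Second, this pairing itself fails in one case: when $2\parallel m$, $m_1=2$ and $\tau$ is odd, the signs $(-1)^{m\tau/d}$ and $(-1)^{m\tau/(2d)}$ and the powers of $x$ do not match, and the paper needs a separate argument showing the total is divisible by $x+1$ (which generates the same ideal as $\Phi_2(q)$ up to the square already accounted for). Without the explicit congruence, the lcm-reindexing, and this exceptional-case analysis, the divisibility by $\Phi_{m_1}(q)^2$ is not established, so the proposal as it stands does not prove the lemma.
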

\begin{proof}
By the definition (\ref{gm}) of $g_m(q,a)$, we have the formula
(\ref{mmtaugm}). It is clear that
$$\{m\}\{m\tau\}g_m(q,a) \in R.$$
Denote $\Phi_n(q)=\prod_{d\mid n} (q^d-1)^{\mu(n/d)}$ to be the
$n$-th cyclotomic polynomial, which is irreducible over $R$. Then
$q^n-1=\prod_{d\mid n} \Phi_d(q)$, and
\begin{align} \label{cyc}
   \{m\}\{m\tau\}&=q^{-\frac{m+m\tau}{2}} \prod_{{m_1}\mid m} \Phi_{m_1}(q)\prod_{{m_1}\mid m\tau} \Phi_{m_1}(q)\\\nonumber
   &=q^{-\frac{m+m\tau}{2}}\prod_{{m_1}\mid m} \Phi_{m_1}(q)^2\prod_{{m_1}\mid m\tau,
{m_1}\nmid m} \Phi_{m_1}(q)
\end{align}

(i) For $m_1\mid m\tau, m_1\nmid m$, and any $|\mu|=m/d$, at least
one of $d\mu_i$'s are not divisible by $m_1$, thus
$\{m\mu_i\tau\}/\{d\mu_i\}$ is divisible by $\Phi_{m_1}(q)$. So
$\Phi_{m_1}(q)$ divides $\{m\}\{m\tau\}g_m(q,a)$.

(ii) For $m_1\mid m$ and any $|\mu|=m/d$, if not all $d\mu_i$ are
divisible by $m_1$, then at least two of them are not divisible.
Then two of corresponding $\{m\mu_i\tau\}/\{d\mu_i\}$ are divisible
by $\Phi_{m_1}(q)$.

We consider modulo $\{m_1\}^2$ in the ring $R$. It is easy to see,
for $a,b\geq 1 $,
\begin{align*}
\frac{\{abm_1\}}{\{bm_1\}} \equiv
a\left(\frac{q^{m_1/2}+q^{-m_1/2}}{2}\right)^{(a-1)b}
\pmod{\{m_1\}^2}
\end{align*}
We write $x=(q^{m_1/2}+q^{-m_1/2})/2$, then $x^2\equiv 1
\pmod{\{m_1\}^2}$.

Then modulo $\Phi_{m_1}(q)^2$, we have
\begin{align}
    &\{m\}\{m\tau\}g_m(q,a)\nonumber\\
     &\equiv \sum_{d\mid m}\sum_{|\mu|=m/d,m_1\mid d\mu} \frac{\mu(d)(-1)^{m\tau/d}}{\mathfrak{z}_\mu} \frac{\{m\mu\tau\}}{\{d\mu\}}\,\{d\mu\}_a \nonumber\\
                          &\equiv \sum_{d\mid m}\sum_{|\mu|=m/d,m_1\mid d\mu} \frac{\mu(d)(-1)^{m\tau/d}}{\mathfrak{z}_\mu} \left(\frac{m\tau}{d}\right)^{l(\mu)}
                          x^{(m|\mu|\tau-d|\mu|)/m_1} \{d\mu\}_a  \nonumber\\
                          &\equiv \sum_{d\mid m}\sum_{|\lambda|=m/\mathrm{lcm}(d,m_1)} \mu(d)(-1)^{m\tau/d}x^{\frac{m}{m_1}(\frac{m\tau}{d}-1)}
                          \cdot\frac{1}{\mathfrak{z}_\lambda}\left(\frac{m\tau}{\mathrm{lcm}(d,m_1)}\right)^{l(\lambda)} \{\lambda\}_{a^{\mathrm{lcm}(d,m_1)}} \nonumber\\
                          &\equiv \sum_{d\mid m} \mu(d) (-1)^{m\tau/d} x^{\frac{m}{m_1}(\frac{m\tau}{d}-1)} \left[ \left(\frac{1-t^{\mathrm{lcm}(d,m_1)}a^{-\mathrm{lcm}(d,m_1)/2}}{1-t^{\mathrm{lcm}(d,m_1)}a^{\mathrm{lcm}(d,m_1)/2}}\right)^{m\tau/\mathrm{lcm}(d,m_1)} \right]_{t^m}
                          \label{luo:eq44}
\end{align}
\begin{itemize}
    \item For the cases $m_1$ with an odd prime factor $p$, or $p=2$ divides $m_1$ and $4\mid m$, or $p=2$ divides $m_1$ and $2\mid \tau$: Consider those $d$ with $\mu(d)\neq 0$ and $p\nmid d$, we have $\mathrm{lcm}(d,m_1)=\mathrm{lcm}(pd,m_1)$ and parity of $m\tau/d$ equals parity of $m\tau/(pd)$, but $\mu(d)=-\mu(pd)$. Thus two terms in (\ref{luo:eq44}) corresponding to $d$ and $pd$ cancelled.
    \item For the remaining case $2\mid\mid m, m_1=2, 2\nmid\tau$: $\Phi_{m_1}(q)^2=(q^{1/2}+q^{-1/2})^2=2x+2$. Coefficients of $x$ in (\ref{luo:eq44}) equals sum of terms corresponds to odd $d\mid m, \mu(d)\neq 0$, while constant term coefficients equals to sum of terms corresponds to $2d\mid m, \mu(2d)\neq 0$. The coefficients of term for $d$ and $2d$ match, so (\ref{luo:eq44}) is divisible by $x+1$.
\end{itemize}

In summary, we have proved that for $m_1\mid m\tau, m_1\nmid m$,
$\Phi_{m_1}(q)$ divides $\{m\}\{m\tau\}g_m(q,a)$; for $m_1\mid m,
m_1 \neq 1$, $\Phi_{m_1}(q)^2$ divides $\{m\}\{m\tau\}g_m(q,a)$. By
(\ref{cyc}), the lemma is proved.
\end{proof}

\begin{lemma} \label{lemmaprime}
For any integer $m\geq 1$, we have
\begin{align*}
g_m(q,a)\in z^{-2}\mathbb{Q}[z^2,a^{\pm \frac{1}{2}}].
\end{align*}
\end{lemma}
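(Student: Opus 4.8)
The plan is to use Lemma~\ref{mmtaulemma} to control denominators and then pin down the subring by two elementary symmetry properties of $g_m(q,a)$. Lemma~\ref{mmtaulemma} says $\{m\}\{m\tau\}g_m(q,a)$ is divisible in $R=\mathbb{Q}[q^{\pm 1/2},a^{\pm 1/2}]$ by $\{m\}\{m\tau\}/\{1\}^2$; since $\{1\}^2=z^2$, cancelling $\{m\}\{m\tau\}$ gives at once that $z^2 g_m(q,a)\in R$, i.e.\ $z^2g_m$ is an honest Laurent polynomial in $q^{1/2}$ and $a^{1/2}$ with no denominators. This is the only place the cyclotomic divisibility enters; it remains to show this Laurent polynomial actually lies in the smaller ring $\mathbb{Q}[z^2,a^{\pm1/2}]$.

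To that end I would introduce two $\mathbb{Q}[a^{\pm1/2}]$-algebra automorphisms of $\mathbb{Q}(q^{1/2},a^{1/2})$: the inversion $\iota\colon q^{1/2}\mapsto q^{-1/2}$ (fixing $a^{1/2}$), so that $q\mapsto q^{-1}$, and the sign change $\sigma\colon q^{1/2}\mapsto -q^{1/2}$ (fixing $a^{1/2}$), which fixes $q$ itself. They act on the quantum integers by $\iota(\{k\})=-\{k\}$ and $\sigma(\{k\})=(-1)^k\{k\}$, they both fix every $\{\nu\}_a$, and they both fix $z^2=q-2+q^{-1}$. The key observation is that symmetric integer-power Laurent polynomials are exactly $\mathbb{Q}[z^2,a^{\pm1/2}]$: an element of $R$ fixed by $\sigma$ has only even powers of $q^{1/2}$, hence lies in $\mathbb{Q}[q^{\pm1},a^{\pm1/2}]$, and if it is moreover fixed by $\iota$ it lies in $\mathbb{Q}[q+q^{-1},a^{\pm1/2}]=\mathbb{Q}[z^2,a^{\pm1/2}]$ (using $q+q^{-1}=z^2+2$).

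Next I would verify, term by term, that each summand $\tfrac{\{n\nu\tau\}}{\{n\}\{n\tau\}}\tfrac{\{\nu\}_a}{\{\nu\}}$ of $\mathcal{Z}_n(q,a)$ is fixed by both $\iota$ and $\sigma$ (the scalar prefactors $(-1)^{n\tau}$ and $1/\mathfrak{z}_\nu$ are $q$- and $a$-independent). Under $\iota$ the numerator $\{n\nu\tau\}$ and the factor $\{\nu\}$ each acquire $(-1)^{l(\nu)}$ while $\{n\}\{n\tau\}$ is unchanged, so the signs cancel and the summand is fixed. Under $\sigma$, writing $\sum_i\nu_i=n$, the accumulated sign is $(-1)^{n^2\tau}$ from the numerator, $(-1)^{n+n\tau}$ from $\{n\}\{n\tau\}$, and $(-1)^{n}$ from $\{\nu\}$, giving total exponent $n^2\tau-n\tau-2n\equiv n\tau(n-1)\pmod 2$, which is even because $n(n-1)$ is always even; hence the summand is fixed. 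Therefore $\mathcal{Z}_n(q,a)$ is invariant under both $\iota$ and $\sigma$, and in particular has only integer powers of $q$. Since $g_m=\sum_{d\mid m}\mu(d)\,\mathcal{Z}_{m/d}(q^d,a^d)$ and $\mathcal{Z}_{m/d}$ has integer powers of its first argument, each $\mathcal{Z}_{m/d}(q^d,a^d)$ has integer powers of $q^d$, hence of $q$, and is $\iota$-invariant because $q\mapsto q^{-1}$ sends $q^d\mapsto (q^d)^{-1}$; thus $g_m$ is invariant under both $\iota$ and $\sigma$.

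Finally I would assemble the pieces: $z^2g_m$ is a Laurent polynomial in $q^{1/2},a^{1/2}$ (Lemma~\ref{mmtaulemma}) that is invariant under both $\sigma$ and $\iota$ (since $z^2$ and $g_m$ are), so by the structural remark above $z^2g_m\in\mathbb{Q}[z^2,a^{\pm1/2}]$, whence $g_m\in z^{-2}\mathbb{Q}[z^2,a^{\pm1/2}]$. I expect the only delicate point to be the sign bookkeeping under $\sigma$; the parity identity $2\mid n(n-1)$ is precisely what makes the half-integer powers of $q$ cancel, and getting that exponent count right is the crux of the argument, everything else being formal manipulation of the two involutions together with the already-established Lemma~\ref{mmtaulemma}.
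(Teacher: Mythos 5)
Your proposal is correct and follows essentially the same route as the paper: Lemma~\ref{mmtaulemma} gives $z^2g_m\in\mathbb{Q}[q^{\pm1/2},a^{\pm1/2}]$, and the two symmetries $q\mapsto q^{-1}$ and $q^{1/2}\mapsto -q^{1/2}$ (the paper's $f(q,a)=f(q^{-1},a)$ and $f(q,a)=f(-q,a)$, with the same parity computation $m\tau(m/d-1)\equiv 0\pmod 2$ that you perform summand by summand on $\mathcal{Z}_n$ before transferring through the Adams operation) pin the result down to $\mathbb{Q}[z^2,a^{\pm1/2}]$. The only difference is organizational, so there is nothing further to add.
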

\begin{proof}
By Lemma \ref{mmtaulemma}, we have
\begin{align*}
f(q,a):=z^2g_m(q,a)=\frac{\{1\}^2}{\{m\}\{m\tau\}}\sum_{d\mid
 m}\sum_{|\mu|=m/d}\frac{\mu(d)(-1)^{m\tau/d}}{\mathfrak{z}_\mu}\frac{\{m\mu\tau\}}{\{d\mu\}}\{d\mu\}_a
 \in \mathbb{Q}[q^{\pm \frac{1}{2}},a^{\pm 1}].
\end{align*}
As a function of $q$, it is clear $f(q,a)$ admits $
f(q,a)=f(q^{-1},a). $ Furthermore, for any $d|m$ and $|\mu|=m/d$, we
have
\begin{align*}
   m|\mu|\tau-d|\mu|-m\tau-m\equiv m^2\tau/d-m\tau=m\tau(m/d-1) \equiv 0
   \pmod{2},
\end{align*}
which implies $ f(q,a)=f(-q,a). $ Therefore,
$f(q,a)=z^2g_{m}(q,a)\in \mathbb{Q}[z^2,a^{\pm \frac{1}{2}}]$. The
lemma is proved.
\end{proof}

\begin{lemma} \label{integralitylemma}
For any $\tau\in \mathbb{Z}$, we have
\begin{align} \label{integralityformula}
\{m\}\{m\tau\}\mathcal{Z}_m(q,a)\in \mathbb{Z}[q^{\pm
\frac{1}{2}},a^{\pm \frac{1}{2}}].
\end{align}
\end{lemma}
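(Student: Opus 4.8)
The plan is to strip the statement down to a clean symmetric-function identity and then to clear the denominators $\mathfrak{z}_\nu$ by exploiting the multiplicativity of the Adams operators $\Psi_k$. Multiplying the defining formula for $\mathcal{Z}_m$ by $\{m\}\{m\tau\}$ gives
\begin{align*}
\{m\}\{m\tau\}\mathcal{Z}_m(q,a)=(-1)^{m\tau}\sum_{|\nu|=m}\frac{1}{\mathfrak{z}_\nu}\,\{m\nu\tau\}\,\frac{\{\nu\}_a}{\{\nu\}},
\end{align*}
and the summand factors over the parts as $\prod_{i=1}^{l(\nu)}\frac{\{m\nu_i\tau\}\{\nu_i\}_a}{\{\nu_i\}}$. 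I would first record that each single-part factor lies in $R:=\mathbb{Z}[q^{\pm1/2},a^{\pm1/2}]$: indeed $\frac{\{m\nu_i\tau\}}{\{\nu_i\}}$ is the quantum integer $[m\tau]$ evaluated at $q^{\nu_i}$, a genuine Laurent polynomial with integer coefficients (for $m\tau<0$ it changes only by a sign, and for $\tau=0$ it vanishes), while $\{\nu_i\}_a\in\mathbb{Z}[a^{\pm1/2}]$.

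The key observation is that the single-part factor is $\Psi_k(b)$ applied to one element: setting $b:=\frac{\{m\tau\}}{\{1\}}\{1\}_a\in R$, the operator $\Psi_k$ (the map $f(q,a)\mapsto f(q^k,a^k)$) sends $\{n\}\mapsto\{kn\}$ and $\{n\}_a\mapsto\{kn\}_a$, so $\frac{\{mk\tau\}\{k\}_a}{\{k\}}=\Psi_k(b)$. Hence
\begin{align*}
\{m\}\{m\tau\}\mathcal{Z}_m(q,a)=(-1)^{m\tau}\sum_{|\nu|=m}\frac{1}{\mathfrak{z}_\nu}\prod_{i=1}^{l(\nu)}\Psi_{\nu_i}(b),
\end{align*}
where the sign $(-1)^{m\tau}$ is a unit of $R$. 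The inner sum is precisely the universal expansion $h_m=\sum_{\nu\vdash m}\mathfrak{z}_\nu^{-1}p_\nu$ of the complete homogeneous symmetric function, specialised through the ring homomorphism $p_k\mapsto\Psi_k(b)$. Thus the entire problem reduces to showing that this specialisation of $h_m$ lands in $R$ rather than merely in $R\otimes\mathbb{Q}$.

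To clear the denominators I would pass to a virtual alphabet. Write $b$ as a finite $\mathbb{Z}$-combination of monomials in $q^{1/2},a^{1/2}$, and collect the monomials with positive and with negative coefficients into two finite multisets $X,Y$ of letters of $R$, so that $\Psi_k(b)=p_k(X)-p_k(Y)$ for every $k\ge 1$. Then the specialised $h_m$ equals $h_m(X-Y)$, and the standard generating-function identity $\prod_{x\in X}(1-xt)^{-1}\prod_{y\in Y}(1-yt)=\sum_m h_m(X-Y)t^m$ yields $h_m(X-Y)=\sum_{i+j=m}(-1)^j h_i(X)\,e_j(Y)$. Each $h_i(X)$ and each $e_j(Y)$ is a monomial-positive symmetric function in finitely many letters of $R$, hence an element of $R$; therefore $h_m(X-Y)\in R$ and the lemma follows. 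Conceptually this is the statement that $R$, being the group ring of a free abelian group (equivalently the representation ring of a rank-two torus), is a special $\lambda$-ring whose Adams operations are exactly the $\Psi_k$, so the $m$-th symmetric power $\sigma^m(b)$ automatically lies in $R$.

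The one genuine obstacle is this denominator-clearing step: termwise the sum carries denominators $\mathfrak{z}_\nu$, and nothing forces integrality until one uses the Frobenius compatibility of the $\Psi_k$ (concretely $\Psi_p(b)\equiv b^p\pmod{pR}$ on monomials, which is the engine behind both the $\lambda$-ring formulation and the virtual-alphabet identity above). The remaining points—checking $b\in R$ in the boundary cases $\tau\le 0$ and $\tau=0$, and tracking the half-integer powers so that $X$ and $Y$ really consist of units of $R$—are routine bookkeeping.
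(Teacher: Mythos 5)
Your proof is correct and is essentially the paper's own argument in $\lambda$-ring clothing: the specialised generating series $\sum_{n}h_n t^n=\exp\left(\sum_{k\geq 1}\Psi_k(b)t^k/k\right)$ is exactly the function $f(x)$ the paper constructs, and your virtual-alphabet factorisation $\prod_{x\in X}(1-xt)^{-1}\prod_{y\in Y}(1-yt)$ is exactly the paper's factorisation of $f$ into linear factors coming from the monomial expansion of $\frac{\{m\tau\}}{\{1\}}\{1\}_a$. The only substantive difference is the final step: the paper extracts the coefficient of $x^m$ explicitly via the $q$-binomial theorem and quotes the integrality of Gaussian binomial coefficients, whereas you conclude directly from the monomial-positivity of $h_i(X)$ and $e_j(Y)$, which is slightly cleaner and spares you the paper's case split on the sign of $\tau$.
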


\begin{proof}
Since
\begin{align*}
(-1)^{m\tau}\{m\}\{m\tau\}\mathcal{Z}_m(q,a)&=\sum_{|\mu|=m}\frac{\{m\tau
\mu\}}{\mathfrak{z}_\mu\{\mu\}}\{\mu\}_a\\\nonumber
&=\sum_{\sum_{j\geq 1}jk_j=m}\frac{\prod_{j\geq 1}(\{m\tau
j\}\{j\}_a)^{k_j}}{\prod_{j\geq 1}j^{k_j}k_j!},
\end{align*}
we construct a generating function
\begin{align} \label{formulafm}
f(x)&=\sum_{n\geq 0}x^n\sum_{\sum_{j\geq 1}jk_j=n}\frac{\prod_{j\geq
1}(\{m\tau j\}\{j\}_a)^{k_j}}{\prod_{j\geq 1}j^{k_j}k_j!}\\\nonumber
&=\sum_{n\geq 0}\sum_{\sum_{j\geq 1}jk_j=n}\frac{\prod_{j\geq
1}(\{m\tau j\}\{j\}_ax^j)^{k_j}}{\prod_{j\geq
1}j^{k_j}k_j!}\\\nonumber &=\exp\left(\sum_{j\geq 1}\frac{\{m\tau
j\}\{j\}_ax^j}{j\{j\}}\right),
\end{align}
Then $
(-1)^{m\tau}\{m\}\{m\tau\}\mathcal{Z}_m(q,a)=\left[f(x)\right]_{x^m}.
$

For $\tau=0$, it is the trivial case.

For $\tau \geq 1$, we use the expansion $\frac{\{m\tau
j\}}{\{j\}}=\sum_{k=0}^{m\tau-1}q^{\frac{j(m\tau-2k-1)}{2}}$, then
\begin{align*}
f(x)&=\exp\left(\sum_{k\geq 0}^{m\tau-1}\sum_{j\geq
1}\left(\frac{(q^{\frac{m\tau-1-2k}{2}}a^{\frac{1}{2}}x)^j}{j}-\frac{(q^{\frac{m\tau-1-2k}{2}}a^{-\frac{1}{2}}x)^j}{j}\right)\right)\\\nonumber
&=\exp\left(\sum_{k\geq
0}^{m\tau-1}\log\frac{1+q^{\frac{m\tau-1-2k}{2}}a^{-\frac{1}{2}}x}{1+q^{\frac{m\tau-1-2k}{2}}a^{\frac{1}{2}}x}\right)\\\nonumber
&=\prod_{k=0}^{m\tau-1}\frac{1+q^{\frac{m\tau-1-2k}{2}}a^{-\frac{1}{2}}x}{1+q^{\frac{m\tau-1-2k}{2}}a^{\frac{1}{2}}x}.
\end{align*}
We introduce the Gaussian binomial coefficients defined by
\begin{align*}
\binom{m}{r}_q=\frac{(1-q^m)(1-q^{m-1})\cdots
(1-q^{m-r+1})}{(1-q)(1-q^2)\cdots (1-q^r)}
\end{align*}
for $r\leq m$, and in particular $\binom{m}{0}_q=1$. The Gaussian
binomial coefficients $\binom{m}{r}_q\in \mathbb{Z}[q]$ (see Chapter
2 of \cite{KS0} for $q$-calculus). There are analogs of the binomial
formula, and of Newton's generalized version of it for negative
integer exponents,
\begin{align*}
\prod_{k=0}^{n-1}(1+q^kt)&=\sum_{k=0}^nq^{\frac{k(k-1)}{2}}\binom{n}{k}_qt^k
\\\nonumber
\prod_{k=0}^{n-1}\frac{1}{(1-q^kt)}&=\sum_{k=0}^\infty
\binom{n+k-1}{k}_qt^k.
\end{align*}

Therefore, the coefficient $\left[f(x)\right]_{x^m}$ of $x^m$ in
$f(x)$ is given by
\begin{align*}
\sum_{j+k=m}(-1)^{k}q^{\frac{j(j-1)-(m\tau-1)m}{2}}a^{\frac{k-j}{2}}\binom{m\tau}{j}_q\binom{m\tau+k-1}{k}_q,
\end{align*}
which lies in the ring $\mathbb{Z}[q^{\pm \frac{1}{2}},a^{\pm
\frac{1}{2}}]$ by the integrality of Gaussian binomial.

For the case $\tau \leq -1$, we write $\{m\tau j\}=-\{-m\tau j\}$ in
the formula (\ref{formulafm}), then the similar computations give
the formula (\ref{integralityformula}).
\end{proof}

\begin{remark}
In fact, by using the Theorem 3.2 in \cite{CLPZ}, we have the
following refiner integrality structure:
\begin{align*}
\{m\}^2\mathcal{Z}_{m}(q,a)\in \mathbb{Z}[z^2,a^{\pm \frac{1}{2}}].
\end{align*}
Together with Lemma \ref{lemmaprime}, it can also be used to
complete the proof of Theorem \ref{integralitymainthm}.
\end{remark}

Now, we can finish the proof of Theorem \ref{integralitymainthm} as
follow:
\begin{proof}
Lemma \ref{lemmaprime}  implies that there exist
 rational numbers $n_{m,g,Q}(\tau)$, such that
\begin{align*}
z^2g_{m}(q,a)=\sum_{g \geq 0}\sum_{Q}n_{m,g,Q}(\tau)z^{2g}a^{Q}\in
\mathbb{Q}[z^2,a^{\pm \frac{1}{2}}].
\end{align*}
So we only need to show $n_{m,g,Q}(\tau)$ are integers. By lemma
\ref{integralitylemma} and the formula (\ref{gm}) for $g_{m}(q,a)$,
we have
\begin{align*}
\{m\}\{m\tau\}z^2g_{m}(q,a)\in \mathbb{Z}[q^{\pm \frac{1}{2}},a^{\pm
\frac{1}{2}}],
\end{align*}
which is equivalent to
\begin{align*}
(q^{\frac{m}{2}}-q^{-\frac{m}{2}})(q^{\frac{m\tau}{2}}-q^{-\frac{m\tau}{2}})\sum_{g
\geq 0}\sum_{Q}n_{m,g,Q}(\tau)(q^{1/2}-q^{-1/2})^{2g}a^{Q}\in
\mathbb{Z}[q^{\pm 1},a^{\pm \frac{1}{2}}].
\end{align*}
So it is easy to get the contradiction if we assume there exists
$n_{m,g,Q}(\tau)$ which is not integer.
\end{proof}

\section{An open string GW/DT correspondence}
\subsection{Reduced open string partition function of $(\mathbb{C}^3,D_\tau)$}
\begin{definition}
We define the reduced open string partition function  of $(X,D)$ as
\begin{align} \label{redopenstring}
\tilde{Z}_{str}^{(X,D)}(g_s,a,x)=Z_{str}^{(X,D)}(g_s,a,\mathbf{x}=(x,0,0,...)).
\end{align}
similarly, the reduced Chern-Simons partition function of
$(S^3,\mathcal{K})$
\begin{align} \label{redCS}
\tilde{Z}_{CS}^{(S^3,\mathcal{K})}(q,a,x)&=Z_{CS}^{(S^3,\mathcal{K})}(q,a,\mathbf{x}=(x,0,0,...))\\\nonumber
&=\sum_{n\geq 0}\mathcal{H}_n(\mathcal{K};q,a)x^n.
\end{align}
since by the definition of Schur function
$s_{\lambda}(\mathbf{x}=(x,0,0,..))=x^{|\lambda|}$ which is nozero
only when $\lambda$ is an one row partition.
\end{definition}

Now we consider the trivial Calabi-Yau 3-fold $\mathbb{C}^3$ with
one AV-brane in framing $\tau$ which can be viewed as the limit case
of the resolved conifold geometry $(\hat{X},D_\tau)$ by study their
toric diagrams. In fact, the open string free energy on
$(\mathbb{C}^3,D_\tau)$ is given by
\begin{align*}
F^{(\mathbb{C}^3,D_\tau)}_{str}(g_s,\mathbf{x})=-\sum_{g\geq0,\mu}\frac{\sqrt{-1}^{l(\mu)}}{|Aut(\mu)|}g_s^{2g-2+l(\mu)}K_{\mu,g,\frac{|\mu|}{2}}^{
\tau}p_{\mu}(\mathbf{x})
\end{align*}
where $K_{\mu,g,\frac{|\mu|}{2}}^{ \tau}$ is the triple Hodge
integral given by formula (\ref{MVGW}).

 We define
\begin{align*}
H_{\lambda}(q)=[\mathcal{H}_{\lambda}(U_\tau,q,a)]_{a^{\frac{|\lambda|}{2}}}=(-1)^{|\lambda|\tau}q^{\frac{\kappa_\lambda
\tau}{2}}\sum_{\mu}\frac{\chi_{\lambda}(C_\mu)}{\mathfrak{z}_\mu}\frac{1}{\{\mu\}}.
\end{align*}
In particular
\begin{align*}
H_{n}(q)=(-1)^{n(\tau-1)}q^{\frac{n(n-1)}{2}\tau+\frac{n^2}{2}}\frac{1}{(1-q)(1-q^2)\cdots(1-q^n)}.
\end{align*}

By large N duality (\ref{LargeNdualiyofunknot}), we have
\begin{align*}
Z^{(\mathbb{C}^3,D_\tau)}_{str}(g_s,\mathbf{x})=\exp\left(F^{(\mathbb{C}^3,D_\tau)}_{str}(g_s,\mathbf{x})\right)=\sum_{\lambda\in
\mathcal{P}}H_{\lambda}(q) s_{\lambda}(\mathbf{x}).
\end{align*}
Then, the reduced open string partition function of
$(\mathbb{C}^3,D_\tau)$ is given by
\begin{align}  \label{partitionfunctionC3}
\tilde{Z}^{(\mathbb{C}^3,D_\tau)}_{str}(g_s,x)=Z^{(\mathbb{C}^3,D_\tau)}_{str}(g_s,\mathbf{x}=(x,0,0,..))=\sum_{n\geq
0}H_{n}(q)x^n.
\end{align}
For brevity, we let
\begin{align*}
Z_\tau(q,x)&=\tilde{Z}^{(\mathbb{C}^3,D_\tau)}_{str}(g_s,x)=\sum_{n\geq
0}H_{n}(q)x^n\\\nonumber &=\sum_{n\geq
0}\frac{(-1)^{n(\tau-1)}q^{\frac{n(n-1)}{2}\tau+\frac{n^2}{2}}}{(1-q)(1-q^2)\cdots(1-q^n)}x^n
\end{align*}

In fact, the LMOV conjecture \ref{LMOVframedknot} provides a
factorization for the partition function $Z_{\tau}(q,a)$, which will
be showed as follow.  We first formulate the LMOV conjecture for the
general reduced partition function $\tilde{Z}$ as in formulas
(\ref{redopenstring}) and (\ref{redCS}). We take the reduced
Chern-Simon partition function for example.
\begin{align*}
\tilde{Z}_{CS}^{(S^3,\mathcal{K}_\tau)}(q,a,x)=\sum_{n\geq
0}\mathcal{H}_n(\mathcal{K}_\tau;q,a)x^n.
\end{align*}
By LMOV conjecture for $\mathcal{K}_\tau$ (Conjecture
\ref{LMOVframedknot}), there exist functions
$f_{m}(\mathcal{K}_\tau;q,a)$ such that
\begin{align*}
\tilde{Z}_{CS}^{(S^3,\mathcal{K}_\tau)}(q,a,x)=\exp\left(\sum_{m\geq
1}\sum_{d\geq
1}\frac{1}{d}f_{m}(\mathcal{K}_\tau;q^d,a^d)x^{dm}\right)
\end{align*}
One can compute $f_{m}(\mathcal{K}_\tau;q,a)$ explicitly,  for
example
\begin{align*}
f_{1}(\mathcal{K}_\tau;q,a)=\mathcal{H}_{1}(\mathcal{K}_\tau).
\end{align*}
\begin{align*}
f_{2}(\mathcal{K}_\tau;q,a)=\mathcal{H}_{2}(\mathcal{K}_\tau)-\frac{1}{2}\mathcal{H}_{1}(\mathcal{K}_\tau)^2
-\frac{1}{2}\Psi_2(\mathcal{H}_{1}(\mathcal{K}_\tau)).
\end{align*}
Then reduced LMOV conjecture asserts the following weak form of the
integrality which was first proposed in \cite{OV}.
\begin{conjecture}[Reduced LMOV conjecture for $\mathcal{K}_\tau$] \label{redLMOV}
There exist integers $N_{m,i,k}(\tau)$, and only finitely many
$N_{m,i,k}(\tau)$ are nonzero for any fixed $m\geq 1$. Such that
\begin{align*}
f_m(\mathcal{K}_\tau;q,a)=-\sum_{i,k\in
\mathbb{Z}}\frac{N_{m,i,k}(\tau)a^{\frac{i}{2}}q^{\frac{k+1}{2}}}{1-q}
\end{align*}
\end{conjecture}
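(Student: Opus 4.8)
The plan is to separate the assertion $g_m(q,a)\in z^{-2}\mathbb{Z}[z^2,a^{\pm 1/2}]$ into two logically independent features and prove them by different tools: first the \emph{shape}, namely that $z^2g_m$ is an honest Laurent polynomial in $z^2$ (so that $g_m$ has at worst a double pole at $z=0$ and rational coefficients $n_{m,g,Q}(\tau)$), and second the \emph{integrality} of those coefficients. I would dispose of the pole structure and the symmetry first, and force integrality at the end.

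For the integrality of the building block I would pass to a generating function. Because the $\nu$-sum in $\mathcal{Z}_m$ is weighted by $1/\mathfrak{z}_\nu$, the exponential formula shows that $(-1)^{m\tau}\{m\}\{m\tau\}\mathcal{Z}_m(q,a)$ is the coefficient of $x^m$ in
\[
f(x)=\exp\Big(\sum_{j\geq 1}\frac{\{m\tau j\}\,\{j\}_a}{j\,\{j\}}\,x^j\Big).
\]
Expanding $\{m\tau j\}/\{j\}$ as a finite sum of powers of $q$ collapses this into a finite ratio of products,
\[
f(x)=\prod_{k=0}^{m\tau-1}\frac{1+q^{(m\tau-1-2k)/2}a^{-1/2}x}{1+q^{(m\tau-1-2k)/2}a^{1/2}x}
\]
for $\tau\geq 1$ (and the mirror computation using $\{m\tau j\}=-\{-m\tau j\}$ for $\tau\leq -1$). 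Reading off the $x^m$-coefficient through the $q$-binomial theorem expresses $\{m\}\{m\tau\}\mathcal{Z}_m$ in terms of Gaussian binomial coefficients, which lie in $\mathbb{Z}[q]$; this gives $\{m\}\{m\tau\}\mathcal{Z}_m\in\mathbb{Z}[q^{\pm 1/2},a^{\pm 1/2}]$, and applying the Adams operators $\Psi_d$ together with the Möbius sum defining $g_m$ yields $\{m\}\{m\tau\}g_m\in\mathbb{Z}[q^{\pm 1/2},a^{\pm 1/2}]$.

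The crux is to upgrade the single denominator $\{m\}\{m\tau\}$ to a genuine double pole, i.e. to show $\{m\}\{m\tau\}g_m$ is divisible by $\{m\}\{m\tau\}/\{1\}^2$, so that $z^2g_m$ is a Laurent polynomial. I would factor $\{m\}\{m\tau\}$ into cyclotomic polynomials $\Phi_{m_1}(q)$ and control the order of vanishing of $\{m\}\{m\tau\}g_m$ along each $\Phi_{m_1}$. When $m_1\mid m\tau$ but $m_1\nmid m$ a single factor suffices, and this is immediate since for every $\nu$ at least one quotient $\{m\nu_i\tau\}/\{d\nu_i\}$ is divisible by $\Phi_{m_1}$. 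The genuinely delicate case is $m_1\mid m$, where a double zero is needed: here I would work modulo $\Phi_{m_1}(q)^2$, use the congruence $\{ab\,m_1\}/\{b\,m_1\}\equiv a\,x^{(a-1)b}\pmod{\{m_1\}^2}$ with $x=(q^{m_1/2}+q^{-m_1/2})/2$ satisfying $x^2\equiv 1$, and rewrite the surviving partition sum as the coefficient of $t^m$ in a power of $\big((1-t/y)/(1-ty)\big)$. The divisibility then drops out by pairing the Möbius-weighted terms indexed by $d$ and $pd$ for a prime $p\mid m_1$ (where $\mu(pd)=-\mu(d)$) and checking they cancel. I expect this cyclotomic/Möbius cancellation, together with the parity bookkeeping forced by the exceptional prime $p=2$ (the case $2\parallel m$, $m_1=2$, $\tau$ odd), to be the main obstacle of the whole proof.

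Finally I would fix the symmetry and conclude. The Laurent polynomial $z^2g_m$ is invariant under $q\mapsto q^{-1}$ (each $\{n\}_q$ changes sign, and the number of factors has the right parity), while a parity count of the exponent $m|\mu|\tau-d|\mu|-m\tau-m\equiv 0\pmod 2$ eliminates the odd powers of $q^{1/2}$ and so gives invariance under $q\mapsto -q$; together these force $z^2g_m\in\mathbb{Q}[z^2,a^{\pm 1/2}]$, exhibiting rational $n_{m,g,Q}(\tau)$. Integrality is then automatic: since $\{m\}\{m\tau\}\,z^2g_m\in\mathbb{Z}[q^{\pm 1/2},a^{\pm 1/2}]$ while $z^2g_m=\sum_{g,Q}n_{m,g,Q}(\tau)z^{2g}a^Q$ has only rational coefficients, multiplying the integral polynomial $\{m\}\{m\tau\}$ against a hypothetical lowest-order non-integral coefficient would produce a non-integral monomial, a contradiction. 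This completes the argument.
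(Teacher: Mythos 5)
Your proposal does not address the statement you were asked to prove. What you have outlined is, essentially step for step, the paper's proof of Theorem \ref{integralitymainthm} (equivalently Theorem \ref{LMOVframedknot3}): the claim that the one-hole generating function $g_m(q,a)=\sum_{d\mid m}\mu(d)\mathcal{Z}_{m/d}(q^d,a^d)$ lies in $z^{-2}\mathbb{Z}[z^2,a^{\pm 1/2}]$, proved via the cyclotomic divisibility lemma for $\{m\}\{m\tau\}g_m$, the parity/symmetry argument placing $z^2g_m$ in $\mathbb{Q}[z^2,a^{\pm 1/2}]$, and the Gaussian-binomial computation giving $\{m\}\{m\tau\}\mathcal{Z}_m\in\mathbb{Z}[q^{\pm 1/2},a^{\pm 1/2}]$. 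The statement at hand, however, is Conjecture \ref{redLMOV}, which the paper explicitly leaves open: it concerns the functions $f_m(\mathcal{K}_\tau;q,a)$ defined by the plethystic logarithm of the \emph{reduced} partition function $\tilde{Z}_{CS}^{(S^3,\mathcal{K}_\tau)}(q,a,x)=\sum_{n\geq 0}\mathcal{H}_n(\mathcal{K}_\tau;q,a)x^n$, and asserts that $(1-q)f_m$ is a \emph{finite} integer combination of monomials $a^{i/2}q^{(k+1)/2}$.

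There are two concrete reasons your argument does not yield this. First, setting $\mathbf{x}=(x,0,0,\dots)$ sends $p_\mu(\mathbf{x})$ to $x^{|\mu|}$ for \emph{every} partition $\mu$, so $\log\tilde{Z}=\sum_n\bigl(\sum_{|\mu|=n}F_\mu(\mathcal{K}_\tau)\bigr)x^n$ and hence $f_n(q,a)=\sum_{d\mid n}\frac{\mu(d)}{d}\Psi_d\bigl(\sum_{|\mu|=n/d}F_\mu(\mathcal{K}_\tau)\bigr)$ aggregates contributions from all numbers of holes $l(\mu)$, not only from $\mu=(n)$; your control of $g_m$ (the $\mu=(m)$ case) says nothing about the multi-row terms. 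Second, even if one had the full LMOV integrality for all $\mu$, the target structure here is different: an element of $z^{-2}\mathbb{Z}[z^2,a^{\pm 1/2}]$ generically has a double pole at $q=1$ (since $z^{-2}=q/(q-1)^2$), whereas $-\sum_{i,k}N_{m,i,k}a^{i/2}q^{(k+1)/2}/(1-q)$ has at most a simple pole and, crucially, a finiteness constraint on the $N_{m,i,k}$. Passing from the $z^{2g-2}$ BPS expansion to this Ooguri--Vafa form requires an additional resummation and divisibility argument that you do not supply, and that the paper itself does not claim; the only case where the paper obtains such a product form is $(\mathbb{C}^3,D_\tau)$ with $\tau\leq -1$, and there it is imported from the Reineke/Kontsevich--Soibelman factorization theorem for the $|\tau|$-loop quiver rather than derived from Theorem \ref{integralitymainthm}.
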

The integers $N_{m,i,k}(\tau)$ are called the Ooguri-Vafa invariants
which were first studied by Ooguri and Vafa in \cite{OV}.

Therefore, by Conjecture \ref{redLMOV}, we have
\begin{align*}
\tilde{Z}_{CS}^{(S^3,\mathcal{K}_\tau)}(q,a,x)&=\exp\left(-\sum_{m\geq
1}\sum_{i,k}N_{m,i,k}(\tau)\sum_{l\geq 0}\sum_{d\geq
1}\frac{1}{d}(a^{\frac{i}{2}}q^{\frac{k+1}{2}+l}x^m)^d
\right)\\\nonumber &=\exp\left(\sum_{m\geq
1}\sum_{i,k}N_{m,i,k}(\tau)\sum_{l\geq
0}\log\left(1-a^{\frac{i}{2}}q^{\frac{k+1}{2}+l}x^m\right)\right)\\\nonumber
&=\prod_{m\geq 1}\prod_{i,k\in \mathbb{Z}}\prod_{l\geq
0}\left(1-a^{\frac{i}{2}}q^{\frac{k+1}{2}+l}x^m\right)^{N_{m,i,k}(\tau)}
\end{align*}

Now, we consider the reduced open string partition function
$Z_{\tau}(q,x)=\tilde{Z}_{str}^{(\mathbb{C}^3,D_\tau)}(g_s,x)$,
 then the
corresponding reduced LMOV conjecture assert that:
\begin{conjecture}[Reduced LMOV conjecture for
$(\mathbb{C}^3,D_\tau)$] \label{redLMOVforC3} There exist integers
$N_{m,k}(\tau)$, and only finitely many $N_{m,k}(\tau)$ are nonzero
for any fixed $m\geq 1$. Such that
\begin{align*}
Z_{\tau}(q,x)=\prod_{m\geq 1}\prod_{k\in \mathbb{Z}}\prod_{l\geq
0}\left(1-q^{\frac{k+1}{2}+l}x^m\right)^{N_{m,k}(\tau)}.
\end{align*}
\end{conjecture}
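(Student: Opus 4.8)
The plan is to prove the statement in the range $\tau\leq -1$, where it reduces to the two results already established in the excerpt, and to isolate $\tau\geq 0$ as the genuinely open part (which is why the statement is phrased as a conjecture). For $\tau\leq -1$ the assertion is in fact a theorem: I would read off the product expansion of $Z_\tau(q,x)$ directly from the GW/DT correspondence (Theorem \ref{openstringGWDT}) together with the factorization of the Hilbert--Poincar\'e series (Theorem \ref{factorization}), and then extract the integers $N_{m,k}(\tau)$ by a purely formal re-indexing.

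Concretely, I would start from Theorem \ref{openstringGWDT}, $Z_\tau(q,x)=P_{-\tau}(q,(-1)^{\tau-1}xq^{1/2})$, and apply Theorem \ref{factorization} with $m=-\tau$. Since $m-1=-(\tau+1)$ has the same parity as $\tau-1$, the sign $(-1)^{(m-1)n}$ becomes $(-1)^{(\tau-1)n}$, and $(-1)^{m-1}=(-1)^{\tau-1}$ matches the argument of $P_{-\tau}$; substituting $t=xq^{1/2}$ turns each factor $1-q^{l-k}t^{n}$ into $1-q^{n/2+l-k}x^{n}$. This yields
\begin{align*}
Z_\tau(q,x)=\prod_{n\geq 1}\prod_{k\geq 0}\prod_{l\geq 0}\left(1-q^{\frac{n}{2}+l-k}x^{n}\right)^{-(-1)^{(\tau-1)n}c_{n,k}},
\end{align*}
which is exactly the factorization recorded in the introduction.

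The remaining step is bookkeeping: I would match this against the shape demanded by the conjecture. For fixed $m=n$ the exponent $\frac{n}{2}+l-k$ equals $\frac{k'+1}{2}+l$ precisely when $k'=m-2k-1$, so I would set $N_{m,k'}(\tau)=-(-1)^{(\tau-1)m}c_{m,k}$ for those $k'\equiv m-1\pmod 2$ with $k'\leq m-1$, and $N_{m,k'}(\tau)=0$ otherwise. Under this dictionary the two triple products coincide factor by factor. Integrality of the $N_{m,k}(\tau)$ and the finiteness of their support for each fixed $m$ are then inherited verbatim from the nonnegative integrality and the finite support of the quantum Donaldson--Thomas invariants $c_{n,k}$ guaranteed by Theorem \ref{factorization}.

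The hard part will be the complementary range $\tau\geq 0$, where neither Theorem \ref{openstringGWDT} nor any quiver/Cohomological Hall algebra interpretation is available, so there is no factorization to borrow. Here I would instead work directly with $Z_\tau(q,x)=\sum_{n\geq 0}\frac{(-1)^{n(\tau-1)}q^{\frac{n(n-1)}{2}\tau+\frac{n^2}{2}}}{(1-q)\cdots(1-q^n)}x^n$ and its plethystic logarithm: the conjecture is equivalent to showing that the functions $f_m$ defined through $Z_\tau=\exp\bigl(\sum_{m,d}\frac{1}{d}f_m(q^d)x^{dm}\bigr)$ each take the form $-\sum_{k}N_{m,k}(\tau)q^{(k+1)/2}/(1-q)$ with integer $N_{m,k}(\tau)$ of finite support, exactly as in Conjecture \ref{redLMOV}. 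Establishing this structural form for $\tau\geq 0$ --- presumably by relating $(1-q)f_m$ to the integral ring appearing in Theorem \ref{integralitymainthm}, or by exhibiting a quiver with potential whose Cohomological Hall algebra computes $Z_\tau$ --- is where the real difficulty lies, and is why the statement remains conjectural beyond $\tau\leq -1$.
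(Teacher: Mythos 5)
Your proposal is correct and follows essentially the same route as the paper: the statement is left as a conjecture there, and the paper's only supporting argument is precisely your $\tau\leq -1$ derivation, combining Theorem \ref{openstringGWDT} with Theorem \ref{factorization} and re-indexing the resulting triple product so that $N_{m,k'}(\tau)=-(-1)^{(\tau-1)m}c_{m,k}$ for $k'=m-2k-1$ (note this only yields integers, not nonnegative ones, which is why the Section~5 form of the conjecture drops nonnegativity). Your identification of $\tau\geq 0$ as the genuinely open part matches the paper's own framing.
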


\subsection{Hilbert-Poincar\'e series of the
Cohomological Hall algebra of the $m$-loop quiver}

We first review the definition and the main results of Cohomological
Hall algebra \cite{KS} for the $m$-loop quiver, $m\in \mathbb{N}$.
Here we mainly following the expositions in \cite{Re}(i.e. Section 4
in \cite{Re}).

Fix a nonnegative integer $m\geq 1$. For a complex vector space $V$,
we denote by $E_V=End(V)^m$ the space of $m$-tuples of endomorphisms
of $V$. Then the group $G_V=GL(V)$ acts on $E_V$ by simultaneous
conjugation.  We study the equivariant cohomology with rational
coefficient $H^*_{G_V}(E_V)$. For two complex vector spaces $V$ and
$W$, Kontsevich and Soibelman \cite{KS} constructed a map:
\begin{align*}
H^*_{G_V}(E_V)\otimes H^*_{G_W}(E_W)\rightarrow
H^{*+\text{shift}}_{G_{V\oplus W}}(E_{V\oplus W}).
\end{align*}
They proved such maps induce an associative unital
$\mathbb{Q}$-algebra structure on $\mathcal{H}=\oplus_{n\geq
0}H^{*}_{G_{\mathbb{C}^n}}(E_{\mathbb{C}^n})$, which is
$\mathbb{N}\times\mathbb{Z}$-bigraded if
$H^{k}_{G_{\mathbb{C}^n}}(E_{\mathbb{C}^n})$ is placed in bidegree
$(n,(m-1)\binom{n}{2}-\frac{k}{2})$. This algebra $\mathcal{H}$ is
called the Cohomological Hall algebra of the $m$-loop quiver in
\cite{KS}. We define the Hilbert-Poincar\'e series of $\mathcal{H}$
as following:
\begin{align*}
P_{m}(q,t)=\sum_{n\geq 0}\sum_{k\in
\mathbb{Z}}\dim_{\mathbb{Q}}\mathcal{H}_{n,k}q^{-k}t^n.
\end{align*}
Please note that we use the parameter $q^{-1}$ instead of $q$ in
Section 4 of \cite{Re}.
\begin{proposition}[Lemma 4.2 \cite{Re}]
The series
\begin{align*}
P_{m}(q,t)=\sum_{n\geq
0}\frac{q^{-(m-1)\frac{n(n-1)}{2}}}{(1-q)(1-q^2)\cdots(1-q^n)}t^n\in
\mathbb{Q}(q)[[t]].  \label{HPseries}
\end{align*}
\end{proposition}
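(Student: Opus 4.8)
The plan is to reduce the computation of the Hilbert-Poincar\'e series to the equivariant cohomology of a point, which for $GL_n(\mathbb{C})$ is a polynomial ring on Chern classes, and then to track the bigrading shift carefully. The key observation is that $E_V=\mathrm{End}(V)^m$ is a complex vector space, hence $G_V=GL(V)$-equivariantly contractible. Therefore the first step is to identify
\begin{align*}
H^*_{G_V}(E_V)\cong H^*_{G_V}(\mathrm{pt})=H^*(BG_V;\mathbb{Q}).
\end{align*}
For $V=\mathbb{C}^n$ one has $BG_V=BGL_n(\mathbb{C})$, whose rational cohomology is the polynomial algebra $\mathbb{Q}[c_1,\dots,c_n]$ on the universal Chern classes, with $\deg c_i=2i$ (equivalently, the ring of symmetric functions $\mathbb{Q}[x_1,\dots,x_n]^{S_n}$ with each $\deg x_i=2$).

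Second, I would compute the Poincar\'e series of this graded ring. Since the generator $c_i$ contributes cohomological degree $2i$, the generating function for $\dim_{\mathbb{Q}}H^d_{GL_n}(\mathrm{pt})$ weighted by $q^{d/2}$ factors over the free generators:
\begin{align*}
\sum_{d\geq 0}\dim_{\mathbb{Q}}H^d_{GL_n}(\mathrm{pt})\,q^{d/2}=\prod_{i=1}^{n}\frac{1}{1-q^i}=\frac{1}{(1-q)(1-q^2)\cdots(1-q^n)}.
\end{align*}
Third, I would translate this into the stated bigrading. A homogeneous class of cohomological degree $d$ sits in second bidegree $k=(m-1)\binom{n}{2}-d/2$, so that $q^{-k}=q^{-(m-1)\binom{n}{2}}q^{d/2}$. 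Hence the degree-$n$ part of $P_m(q,t)$ is
\begin{align*}
t^n\sum_{k\in\mathbb{Z}}\dim_{\mathbb{Q}}\mathcal{H}_{n,k}\,q^{-k}
=t^n\,q^{-(m-1)\binom{n}{2}}\sum_{d\geq 0}\dim_{\mathbb{Q}}H^d_{GL_n}(\mathrm{pt})\,q^{d/2}
=\frac{q^{-(m-1)\frac{n(n-1)}{2}}}{(1-q)(1-q^2)\cdots(1-q^n)}\,t^n,
\end{align*}
and summing over $n\geq 0$ yields the asserted formula, with convergence in $\mathbb{Q}(q)[[t]]$ since each coefficient of $t^n$ is a single element of $\mathbb{Q}(q)$.

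There is no genuine analytic or algebraic obstacle here: once the contractibility of $E_V$ collapses the equivariant cohomology onto $H^*(BGL_n;\mathbb{Q})$, everything is a direct computation. The only point requiring care is the grading bookkeeping, namely the shift by $(m-1)\binom{n}{2}=(m-1)\tfrac{n(n-1)}{2}$, the factor of $\tfrac12$ relating cohomological degree to the polynomial degree, and the sign in $q^{-k}$; these must be reconciled exactly so that the half-integer powers disappear and the claimed power of $q$ emerges. This is where I would be most careful, especially because the index $k$ is used both for cohomological degree and for the second bigrading label, so I would fix distinct symbols (say $d$ for cohomological degree) to avoid confusion.
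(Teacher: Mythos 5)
Your argument is correct and is essentially the standard proof: the paper itself does not prove this proposition but simply quotes it as Lemma 4.2 of \cite{Re}, and the computation there proceeds exactly as you describe, namely contracting $E_V$ equivariantly to a point, identifying $H^*_{G_V}(\mathrm{pt})$ with $\mathbb{Q}[c_1,\dots,c_n]$ with $\deg c_i=2i$, reading off the Poincar\'e series $\prod_{i=1}^n(1-q^i)^{-1}$, and applying the bigrading shift $(m-1)\binom{n}{2}$. Your bookkeeping of the sign in $q^{-k}$ and of the factor $\tfrac12$ relating cohomological degree to polynomial degree is consistent with the convention stated in the paper (note the paper uses $q^{-1}$ relative to Section 4 of \cite{Re}), so nothing further is needed.
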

The main property of $P_{m}(q,t)$ is the following factorization
formula.
\begin{theorem}[Conjecture 3.3 \cite{Re} or Theorem 2.3 \cite{KS}]
\label{HPfactorization} There exists a product expansion
\begin{align*}
P_{m}(q,(-1)^{m-1}t)=\prod_{n\geq 1}\prod_{k\geq 0}\prod_{l\geq
0}(1-q^{l-k}t^n)^{-(-1)^{(m-1)n}c_{n,k}}
\end{align*}
for nonnegative integers $c_{n,k}$, such that only finitely many
$c_{n,k}$ are nonzero for any fixed $n$.
\end{theorem}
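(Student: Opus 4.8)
The plan is to deduce the product factorization from the algebra structure of the Cohomological Hall algebra $\mathcal{H}=\bigoplus_{n\geq 0}H^{*}_{G_{\mathbb{C}^n}}(E_{\mathbb{C}^n})$ itself, rather than from manipulating the explicit series $P_m(q,t)$. The central input is that, for the symmetric $m$-loop quiver, $\mathcal{H}$ is a \emph{free supercommutative} algebra: this is Kontsevich--Soibelman's structure theorem (Theorem 2.3 of \cite{KS}, in the shape conjectured as Conjecture 3.3 of \cite{Re} and established for symmetric quivers in general by Efimov). Concretely, there is a bigraded vector space $V^{\mathrm{prim}}=\bigoplus_{n\geq 1,\,k\geq 0}V^{\mathrm{prim}}_{n,k}$, finite-dimensional in each fixed $\mathbb{N}$-degree $n$, together with the tautological generator $u$ of cohomological degree two spanning $H^{*}(BGL_1)=\mathbb{Q}[u]$, such that
\begin{align*}
\mathcal{H}\;\cong\;\mathrm{Sym}\!\left(V^{\mathrm{prim}}\otimes \mathbb{Q}[u]\right)
\end{align*}
as $\mathbb{N}\times\mathbb{Z}$-bigraded supervector spaces, the super sign being governed by the parity of the $\mathbb{N}$-degree. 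I would then set $c_{n,k}:=\dim_{\mathbb{Q}}V^{\mathrm{prim}}_{n,k}$; these are nonnegative integers by construction (and vanish for $k<0$), and the finite-dimensionality of $V^{\mathrm{prim}}$ in each degree $n$ yields the required finiteness of the nonzero $c_{n,k}$.

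The second step is the routine passage from this freeness to the infinite product by comparing Hilbert--Poincar\'e series. The Poincar\'e series of a free supercommutative algebra is the super plethystic exponential of the series of its generators, so each generator contributes a factor $(1-\,\cdot\,)^{\mp 1}$ according to its parity. The factor $\mathbb{Q}[u]$ attached to each primitive generator produces exactly the inner product $\prod_{l\geq 0}$ of the statement: a primitive class contributing $q^{-k}t^{n}$, tensored with $\mathbb{Q}[u]$, has Poincar\'e series $\sum_{l\geq 0}q^{l-k}t^{n}$, whose symmetric (resp.\ exterior) algebra is $\prod_{l\geq 0}(1-q^{l-k}t^{n})^{\mp 1}$. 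Collecting over all primitive generators gives
\begin{align*}
P_m(q,t)=\prod_{n\geq 1}\prod_{k\geq 0}\prod_{l\geq 0}\bigl(1-q^{l-k}t^{n}\bigr)^{\mp c_{n,k}},
\end{align*}
with the sign dictated by the parity of the generators.

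Finally I would track the signs. For the $m$-loop quiver the Euler form gives a generator of $\mathbb{N}$-degree $n$ the parity of $(m-1)n^{2}\equiv (m-1)n \pmod 2$, so even and odd generators alternate according to $(-1)^{(m-1)n}$; absorbing this alternation via the substitution $t\mapsto(-1)^{m-1}t$ turns the signed product into the uniform expression
\begin{align*}
P_{m}\bigl(q,(-1)^{m-1}t\bigr)=\prod_{n\geq 1}\prod_{k\geq 0}\prod_{l\geq 0}\bigl(1-q^{l-k}t^{n}\bigr)^{-(-1)^{(m-1)n}c_{n,k}},
\end{align*}
which is the claim. The main obstacle is entirely in the first step: the freeness of $\mathcal{H}$, equivalently the nonnegativity of the quantum Donaldson--Thomas invariants $c_{n,k}$, is the deep part, while everything after it is bookkeeping with graded Poincar\'e series. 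An alternative, more self-contained route bypasses the CoHA: the explicit series satisfies the $q$-difference equation $P_m(q,t)-P_m(q,qt)=t\,P_m(q,q^{-(m-1)}t)$, from which one can extract a factorization with integer exponents; but proving that those integers are \emph{nonnegative} again reduces to identifying them with dimensions of spaces of stable representations through Reineke's Harder--Narasimhan recursion, so the positivity cannot be circumvented.
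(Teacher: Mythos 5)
There is nothing in the paper to compare against here: Theorem \ref{HPfactorization} is imported verbatim from the literature (it is stated as ``Conjecture 3.3 \cite{Re} or Theorem 2.3 \cite{KS}'') and the paper supplies no proof of it. Your outline is a faithful and essentially correct account of how the result is actually established: the deep input is the freeness of $\mathcal{H}$ as a supercommutative algebra on a bigraded space of primitives tensored with $\mathbb{Q}[u]$ (Kontsevich--Soibelman's structure theorem, proved for symmetric quivers by Efimov), after which the product formula is pure Poincar\'e-series bookkeeping. Your sign analysis checks out: the super-parity of a generator in $\mathbb{N}$-degree $n$ is that of $\chi(n,n)=(1-m)n^2\equiv (m-1)n \pmod 2$, even generators contribute $(1-x)^{-1}$ and odd ones $(1+x)$, and the substitution $t\mapsto(-1)^{m-1}t$ flips exactly the odd-degree factors so that the product becomes uniformly $(1-q^{l-k}t^n)^{-(-1)^{(m-1)n}c_{n,k}}$; the $\mathbb{Q}[u]$ factor accounts for the $\prod_{l\geq 0}$. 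The $q$-difference equation you quote for the alternative route is also correct. The one caveat worth making explicit is logical rather than mathematical: the freeness of $\mathcal{H}$ with finite-dimensional primitive pieces and the nonnegativity-plus-finiteness of the $c_{n,k}$ are two packagings of the same theorem, so your argument is a reduction of the statement to an equivalent form of itself together with the (genuinely routine) translation between them --- which is a perfectly reasonable thing to write for a result that the paper itself only cites, but it should be presented as an exposition of \cite{KS} and Efimov rather than as an independent proof.
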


Let $DT_n^{(m)}(q)=\sum_{k\geq 0}c_{n,k}q^k$ which is called the
quantum Donaldson-Thomas invariant in \cite{Re}. The above theorem
implies that $DT_{n}^{(m)}(q)$ is a polynomial with nonnegative
coefficients. The explicit formula for $DT_n^{(m)}(q)$ was given in
\cite{Re}.

\subsection{The correspondence}
One can write the partition function $Z_{\tau}(q,a)$ in the
following form:
\begin{align*}
Z_{-\tau}(q,x)=\sum_{n\geq
0}\frac{q^{-(\tau-1)\frac{n(n-1)}{2}}}{(1-q)(1-q^2)\cdots
(1-q^n)}((-1)^{\tau-1}xq^{\frac{1}{2}})^n.
\end{align*}
By comparing with the Hilbert-Poincar\'e series $P_m(q,t)$
(\ref{HPseries}), we obtain
\begin{theorem} \label{GWDTcorrespondence}
For $\tau\leq -1$ (i.e. $-\tau\geq 1$), we have
\begin{align*}
Z_{\tau}(q,x)=P_{-\tau}(q,(-1)^{\tau-1}xq^{\frac{1}{2}}).
\end{align*}
\end{theorem}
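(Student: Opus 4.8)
The plan is to prove the identity by a direct term-by-term comparison of the two power series, since no deep input is required---only careful bookkeeping of the exponents of $q$ and of the sign. First I would substitute $m=-\tau$ into the formula (\ref{HPseries}) for the Hilbert-Poincar\'e series. Because $-(m-1)=-(-\tau-1)=\tau+1$, this gives
\[
P_{-\tau}(q,t)=\sum_{n\geq 0}\frac{q^{(\tau+1)\frac{n(n-1)}{2}}}{(1-q)(1-q^2)\cdots(1-q^n)}\,t^n.
\]
The hypothesis $\tau\leq -1$ enters only at this point: it ensures $m=-\tau\geq 1$, so that $P_{-\tau}$ is defined as the Hilbert-Poincar\'e series of the $|\tau|$-loop quiver.

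Next I would set $t=(-1)^{\tau-1}xq^{1/2}$ and expand the $n$-th term. The sign contributes $\bigl((-1)^{\tau-1}\bigr)^n=(-1)^{n(\tau-1)}$, which already matches the sign appearing in $Z_\tau(q,x)$, while the factor $q^{1/2}$ contributes $q^{n/2}$. Hence the total exponent of $q$ in the $n$-th term of $P_{-\tau}(q,(-1)^{\tau-1}xq^{1/2})$ becomes $(\tau+1)\frac{n(n-1)}{2}+\frac{n}{2}$.

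The only computation is then the elementary identity
\[
(\tau+1)\frac{n(n-1)}{2}+\frac{n}{2}=\tau\frac{n(n-1)}{2}+\left(\frac{n(n-1)}{2}+\frac{n}{2}\right)=\tau\frac{n(n-1)}{2}+\frac{n^2}{2},
\]
using $\frac{n(n-1)}{2}+\frac{n}{2}=\frac{n^2}{2}$. This is exactly the exponent of $q$ in the $n$-th term of $Z_\tau(q,x)$. Since the denominators $(1-q)(1-q^2)\cdots(1-q^n)$ are identical in both series, every coefficient of $x^n$ agrees, and the theorem follows.

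There is essentially no obstacle here: the whole content is the cancellation $\frac{n(n-1)}{2}+\frac{n}{2}=\frac{n^2}{2}$ together with the normalizing substitution $t\mapsto(-1)^{\tau-1}xq^{1/2}$, which is precisely engineered so that the prefactor $q^{n^2/2}$ in the colored HOMFLY-PT invariant $H_n(q)$ matches the symmetric normalization $q^{-(m-1)\binom{n}{2}}$ of the Cohomological Hall algebra. The one point deserving a little care is tracking the half-integer power $q^{1/2}$ and the sign $(-1)^{\tau-1}$ consistently through the substitution; once this is done, the proof is a one-line coefficient comparison.
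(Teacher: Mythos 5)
Your proposal is correct and is essentially identical to the paper's own proof: the paper performs the same one-line coefficient comparison, merely rewriting $Z_{-\tau}(q,x)$ as $\sum_{n\geq 0}\frac{q^{-(\tau-1)\frac{n(n-1)}{2}}}{(1-q)\cdots(1-q^n)}\bigl((-1)^{\tau-1}xq^{\frac{1}{2}}\bigr)^n$ and matching it against $P_{m}(q,t)$, which is the same bookkeeping of the exponent $\tau\frac{n(n-1)}{2}+\frac{n^2}{2}=(\tau+1)\frac{n(n-1)}{2}+\frac{n}{2}$ and of the sign $(-1)^{n(\tau-1)}$ that you carry out. Your observation that $\tau\leq -1$ is needed only so that $-\tau\geq 1$ makes $P_{-\tau}$ well defined is also the (implicit) role of that hypothesis in the paper.
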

Theorem \ref{GWDTcorrespondence} can be viewed as an open string
GW/DT correspondence, we refer to \cite{MOOP} for a discussion of
the GW/DT correspondence for toric 3-folds.

Theorem \ref{HPfactorization} implies that, for $\tau\leq -1$, the
reduced open string partition function $Z_{\tau}(q,x)$ on
$(\mathbb{C}^3,D_{\tau})$ carries the product factorization:
\begin{align*}
Z_{\tau}(q,x)=\prod_{n\geq 1}\prod_{k\geq 0}\prod_{l\geq
0}(1-q^{\frac{n}{2}+l-k}x^n)^{-(-1)^{(\tau-1)n}c_{n,k}}.
\end{align*}
Comparing with the Conjecture \ref{redLMOVforC3}, it provides the
correspondence of the Ooguri-Vafa invariants $N_{m,k}(\tau)$ and the
Donaldson-Thomas invariants $c_{n,k}$ for $\tau\leq -1$.

\begin{remark}
For the simplicity of the discussion of the LMOV invariants,
Garoufalidis, Kucharski and Sulkowski \cite{GKS} introduced the
notion of extremal LMOV invariant. In fact, the Ooguri-Vafa
invariants (or weak LMOV invariants) $N_{m,k}(\tau)$ for
$(\mathbb{C}^3,D_\tau)$ in Conjecture \ref{redLMOVforC3} are the
extremal LMOV variants in the sense of \cite{GKS} for framed unknot
$U_\tau$. The relationship of the extremal LMOV invariants and the
work of Reineke \cite{Re} was extensively studied in the recent
paper \cite{KS1}.
\end{remark}

\section{Appendix}
 In \cite{GKS}, Garoufalidis, Kucharski
and Sulkowski obtained the following extremal BPS invariants of
twist knots:
\begin{equation}
    b_{K_p,r}^{-}=-\frac{1}{r^2}\sum_{d\mid r}\mu(\frac{r}{d}) \binom{3d-1}{d-1}, \qquad
    b_{K_p,r}^{+}=\frac{1}{r^2}\sum_{d\mid r}\mu(\frac{r}{d}) \binom{(2|p|+1)d-1}{d-1}
    \label{luo:eq1}
\end{equation}
for $p\leq -1$ and
\begin{equation}
    b_{K_p,r}^{-}=-\frac{1}{r^2}\sum_{d\mid r}\mu(\frac{r}{d}) (-1)^{d+1}\binom{2d-1}{d-1}, \qquad
    b_{K_p,r}^{+}=\frac{1}{r^2}\sum_{d\mid r}\mu(\frac{r}{d}) (-1)^{d} \binom{(2p+2)d-1}{d-1}
    \label{luo:eq22}
\end{equation}
for $p\geq 2$. See the formulas (1.4) and (1.5) in \cite{GKS}.

 In fact, in their later work \cite{KS1},  Kucharski and Sulkowski found the work
of Reineke \cite{Re} can be used to interpret the integrality of
above BPS invariants $b_{K_p,r}^{-}$ and $b_{K_p,r}^{+}$. But in
this appendix, we provide a direct proof of the integrality of the
BPS invariants $b_{K_p,r}^{-}$ and $b_{K_p,r}^{+}$ by the same
method used in the proofs of Theorems \ref{integralitythmonehole},
\ref{theoremgenus0twohole}.

\begin{theorem} \label{appendixthm}
    $b_{K_p,r}^{-}$ and $b_{K_p,r}^{+}$ given in formulas (\ref{luo:eq1}) and (\ref{luo:eq22}) are integers.
\end{theorem}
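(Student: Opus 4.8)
The plan is to treat all four quantities uniformly. Up to an overall sign, each of $b_{K_p,r}^{\pm}$ has the shape
\[
\frac{1}{r^2}\sum_{d\mid r}\mu(r/d)\,\epsilon(d)\binom{Nd-1}{d-1},
\]
where $N$ is a fixed positive integer ($N=3$ or $N=2|p|+1$ when $p\le -1$; $N=2$ or $N=2p+2$ when $p\ge 2$) and $\epsilon(d)\in\{1,(-1)^{d}\}$. Observe the crucial correlation: $N$ is odd precisely in the unsigned cases ($\epsilon\equiv 1$) and even precisely in the signed cases ($\epsilon(d)=(-1)^{d}$). Since the prime powers dividing $r^2$ are coprime, it suffices to fix a prime $p$ with $p^{\alpha}\mid\mid r$ and show that $p^{2\alpha}$ divides $S:=\sum_{d\mid r}\mu(r/d)\epsilon(d)\binom{Nd-1}{d-1}$; running this over all such $p$ then yields $r^2\mid S$ and hence integrality.

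First I would establish the key two-term congruence, the single-binomial analogue of Lemma~\ref{luo:lemma2-0}. Writing $m!=f_p(m)\,p^{[m/p]}[m/p]!$ and using $p\mid Nn$ whenever $\alpha=v_p(n)\ge 1$, the powers of $p$ cancel exactly and one obtains the clean identity
\[
\binom{Nn-1}{n-1}=\frac{f_p(Nn)}{f_p(n)\,f_p((N-1)n)}\binom{Nn/p-1}{n/p-1}.
\]
Setting $n=p^{\alpha}m_0$ with $p\nmid m_0$ and applying Lemma~\ref{luo:lemma1-0} to $f_p(p^{\alpha}\cdot Nm_0)$, $f_p(p^{\alpha}\cdot m_0)$ and $f_p(p^{\alpha}\cdot(N-1)m_0)$, each factor is $\equiv f_p(p^{\alpha})$ raised to the obvious exponent modulo $p^{2\alpha}$; since $Nm_0=m_0+(N-1)m_0$, the unit ratio collapses to $1\pmod{p^{2\alpha}}$. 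Thus for $p$ odd, or $p=2$ with $\alpha\ge 2$, we get $\binom{Nn-1}{n-1}\equiv\binom{Nn/p-1}{n/p-1}\pmod{p^{2\alpha}}$, while for the borderline case $p=2,\alpha=1$ the weaker estimate $f_2(2k)\equiv(-1)^{[k/2]}\pmod 4$ of Lemma~\ref{luo:lemma1-0} gives instead $\binom{Nn-1}{n-1}\equiv(-1)^{N+1}\binom{Nn/2-1}{n/2-1}\pmod 4$.

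Next I would run the Möbius pairing exactly as in the proofs of Theorems~\ref{integralitythmonehole} and~\ref{theoremgenus0twohole}. Only terms with $r/d$ squarefree survive, and these organise into pairs $d=p^{\alpha}d'$ and $d=p^{\alpha-1}d'$ (with $d'\mid r/p^{\alpha}$ and $r/(p^{\alpha}d')$ squarefree), whose Möbius weights differ by $\mu(p\cdot)=-\mu(\cdot)$. For $p$ odd the sign $\epsilon$ is constant across each pair, since $(-1)^{p^{\alpha}d'}=(-1)^{p^{\alpha-1}d'}=(-1)^{d'}$; hence with $n=p^{\alpha}d'$ each pair equals $\pm\mu(r/(p^{\alpha}d'))[\binom{Nn-1}{n-1}-\binom{Nn/p-1}{n/p-1}]$, which is divisible by $p^{2\alpha}$ by the congruence above. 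The same argument covers $p=2,\alpha\ge 2$, where both signs are $+1$. This settles $p^{2\alpha}\mid S$ in all these cases.

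The only genuinely delicate point — the analogue of Lemma~\ref{luo:lemma3-0} — is $p=2,\alpha=1$, where the two members of a pair carry opposite signs $(-1)^{2d'}=1$ and $(-1)^{d'}=-1$. There the flip $\mu(2\cdot)=-\mu(\cdot)$ converts the paired contribution into $\mu(r/(2d'))[\binom{Nn-1}{n-1}+\binom{Nn/2-1}{n/2-1}]$, a sum rather than a difference. This is exactly where the parity correlation is used: in the signed cases $N$ is even, so the $p=2,\alpha=1$ congruence gives ratio $\equiv(-1)^{N+1}\equiv -1\pmod 4$, forcing $\binom{Nn-1}{n-1}+\binom{Nn/2-1}{n/2-1}\equiv 0\pmod 4$; in the unsigned cases $N$ is odd, the pair is again a difference, and ratio $\equiv(-1)^{N+1}\equiv +1$ gives $4\mid\binom{Nn-1}{n-1}-\binom{Nn/2-1}{n/2-1}$. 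In every case $p^{2\alpha}\mid S$, whence $r^2\mid S$ and $b_{K_p,r}^{\pm}\in\mathbb{Z}$. I expect this $p=2$ sign bookkeeping to be the sole real obstacle; the rest is a direct transcription of the $f_p$-machinery already developed in Lemmas~\ref{luo:lemma1-0}--\ref{luo:lemma3-0}.
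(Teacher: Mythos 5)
Your proposal is correct and follows essentially the same route as the paper: the paper's Lemma 6.3 is exactly your two-term congruence (with the sign $(-1)^{(k+1)m}$ absorbed into its statement rather than factored out as $\epsilon(d)$), proved via the same $f_p$-ratio identity and Lemma 6.2, with the same Möbius pairing of $d$ and $pd$ and the same special handling of $p=2$, $\alpha=1$ by a parity check. The only cosmetic difference is that you verify the borderline sign via the explicit $(-1)^{N+1}$ computation while the paper reduces it to checking $k\in\{1,2\}$.
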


For nonnegative integer $n$ and prime number $q$, define
\begin{equation}
    f_q(n)=\prod_{i=1,q\nmid i}^n i = \frac{n!}{q^{[n/q]}[n/q]!} \nonumber
\end{equation}

\begin{lemma}[=Lemma \ref{functionfp}]
    For odd prime numbers $q$ and $\alpha\geq 1$ or for $q=2$, $\alpha\geq 2$, we have $q^{2\alpha}\mid f_q(q^{\alpha} n)-f_q(q^{\alpha})^n$.
    For $q=2, \alpha=1$, $f_2(2n)\equiv (-1)^{[n/2]}\pmod{4}$ \label{luo:lemma1}
\end{lemma}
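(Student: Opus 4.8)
The plan is to prove the statement by induction on $n$, exactly as in the proof of Lemma~\ref{functionfp} (of which this is a verbatim restatement), reducing the whole claim to a single divisibility of a sum of reciprocals in the $q$-adic integers. First I would set up the multiplicative recursion for $f_q$. Since the integers in the interval $(q^{\alpha}(n-1),\,q^{\alpha}n]$ that are coprime to $q$ are precisely the numbers $q^{\alpha}(n-1)+i$ with $1\le i\le q^{\alpha}$ and $q\nmid i$, one immediately obtains
\[
f_q(q^{\alpha}n)=f_q(q^{\alpha}(n-1))\prod_{\substack{i=1\\ q\nmid i}}^{q^{\alpha}}\bigl(q^{\alpha}(n-1)+i\bigr).
\]

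Next I would expand this product modulo $q^{2\alpha}$. Writing each factor as $i\bigl(1+q^{\alpha}(n-1)/i\bigr)$ and noting that every correction term carries a factor $q^{\alpha}$, any product of two or more corrections is divisible by $q^{2\alpha}$; hence, keeping only the constant and first-order terms,
\[
\prod_{\substack{i=1\\ q\nmid i}}^{q^{\alpha}}\bigl(q^{\alpha}(n-1)+i\bigr)\equiv f_q(q^{\alpha})\Bigl(1+q^{\alpha}(n-1)\!\!\sum_{\substack{i=1\\ q\nmid i}}^{q^{\alpha}}\tfrac{1}{i}\Bigr)\pmod{q^{2\alpha}}.
\]
The heart of the argument is then to show that $\sum_{i=1,\,q\nmid i}^{q^{\alpha}}1/i\equiv 0\pmod{q^{\alpha}}$ in $\mathbb{Z}_q$. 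For this I would pair each index $i$ with $q^{\alpha}-i$ (again coprime to $q$), using $\tfrac1i+\tfrac1{q^{\alpha}-i}=\tfrac{q^{\alpha}}{i(q^{\alpha}-i)}$, whose denominator is a unit, so each pair contributes a multiple of $q^{\alpha}$. Multiplying the first-order term by the extra $q^{\alpha}(n-1)$ then pushes it into $q^{2\alpha}\mathbb{Z}_q$, giving $f_q(q^{\alpha}n)\equiv f_q(q^{\alpha}(n-1))f_q(q^{\alpha})\pmod{q^{2\alpha}}$, and induction on $n$ yields the desired congruence $f_q(q^{\alpha}n)\equiv f_q(q^{\alpha})^{n}$.

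I expect the main obstacle, and the precise source of the hypothesis ``$\alpha\ge 2$ or $q$ odd'', to lie in this pairing step. The involution $i\mapsto q^{\alpha}-i$ on the index set is fixed-point free exactly when $2i=q^{\alpha}$ has no coprime solution, i.e. when $q$ is odd (so $q^{\alpha}$ is odd) or when $q=2$ and $\alpha\ge 2$ (so the putative fixed point $i=q^{\alpha}/2$ is itself divisible by $q$); one also checks that the number $\varphi(q^{\alpha})=q^{\alpha-1}(q-1)$ of coprime residues is even in exactly these cases, so no unpaired term survives. In the degenerate case $q=2$, $\alpha=1$ the index set is $\{1\}$, the involution collapses, and the reciprocal sum is only divisible by $2$ rather than $4$; I would therefore treat this case directly, computing $f_2(2n)=\prod_{j=1}^{n}(2j-1)$ modulo $4$ and using $2j-1\equiv(-1)^{j+1}\pmod 4$ to count the even $j\le n$, which gives $f_2(2n)\equiv(-1)^{[n/2]}\pmod 4$.
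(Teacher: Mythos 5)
Your proposal is correct and follows essentially the same route as the paper: the same multiplicative recursion for $f_q$, the same first-order expansion modulo $q^{2\alpha}$ reducing everything to the reciprocal sum $\sum_{q\nmid i}1/i$, the same pairing $i\leftrightarrow q^{\alpha}-i$ to show that sum is divisible by $q^{\alpha}$, and the same direct computation for $q=2$, $\alpha=1$. Your discussion of where the hypothesis ``$\alpha\ge 2$ or $q$ odd'' enters (fixed-point-freeness of the involution) is a correct and slightly more explicit account of what the paper compresses into the remark that $p^{\alpha-1}(p-1)$ is even.
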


\begin{lemma}
    For prime number $q$ and $m=q^\alpha a, q\nmid a, \alpha\geq 1, k\geq 1$, $q^{2\alpha}$ divides
    \begin{equation}
        (-1)^{(k+1)m}\binom{km-1}{m-1}-(-1)^{(k+1)m/q}\binom{km/q-1}{m/q-1}. \nonumber
    \end{equation}
    \label{luo:lemma2}
\end{lemma}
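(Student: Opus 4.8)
The plan is to reduce the claim to the single exact identity
\begin{equation*}
\frac{\binom{km-1}{m-1}}{\binom{km/q-1}{m/q-1}}=\frac{f_q(km)}{f_q(m)\,f_q\big((k-1)m\big)},
\end{equation*}
which I would establish first. Since $\binom{kn-1}{n-1}=\frac1k\binom{kn}{n}$, the left-hand ratio equals $\binom{km}{m}/\binom{km/q}{m/q}$; expanding each factorial through $n!=q^{[n/q]}[n/q]!\,f_q(n)$ and using $q\mid m$ (so that $[km/q]=km/q=m/q+(k-1)m/q$, and likewise for the other arguments) makes the $q$-powers and the deeper factorials collapse into $\binom{km/q}{m/q}$, leaving exactly the quoted ratio of $f_q$-values. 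This identity is exact, valid for every prime $q$, every $\alpha\ge1$ and every $k\ge1$, and because each $f_q$-value is prime to $q$ the ratio $R:=\binom{km-1}{m-1}/\binom{km/q-1}{m/q-1}$ is a $q$-adic unit. The case $k=1$ is trivial (both binomials equal $1$), so I assume $k\ge2$.

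Writing the target expression as
\begin{equation*}
D=(-1)^{(k+1)m/q}\binom{km/q-1}{m/q-1}\Big[(-1)^{(k+1)(m-m/q)}R-1\Big],
\end{equation*}
it suffices to show the bracket is divisible by $q^{2\alpha}$ in $\mathbb{Z}_{(q)}$, since $\binom{km/q-1}{m/q-1}$ and $D$ are integers. For $q$ odd (any $\alpha\ge1$) or $q=2,\ \alpha\ge2$, the exponent $m-m/q=(m/q)(q-1)$ is even, so the sign inside the bracket is $+1$; and Lemma~\ref{luo:lemma1} gives $f_q(q^\alpha n)\equiv f_q(q^\alpha)^n\pmod{q^{2\alpha}}$, whence
\begin{equation*}
R=\frac{f_q(q^\alpha\cdot ka)}{f_q(q^\alpha a)\,f_q(q^\alpha(k-1)a)}\equiv\frac{f_q(q^\alpha)^{ka}}{f_q(q^\alpha)^{a}\,f_q(q^\alpha)^{(k-1)a}}=1\pmod{q^{2\alpha}}.
\end{equation*}
Thus the bracket $\equiv R-1\equiv0\pmod{q^{2\alpha}}$ and the claim follows in these cases.

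The genuine obstacle is the exceptional case $q=2,\ \alpha=1$ (so $m=2a$ with $a$ odd and $m/q=a$), where $m-m/q=a$ is odd, the two signs in $D$ genuinely differ, and I cannot simply factor them out. Here I would instead use the second clause of Lemma~\ref{luo:lemma1}, namely $f_2(2n)\equiv(-1)^{[n/2]}\pmod4$, to compute $R$ modulo $4$ directly from the factorization identity, obtaining $R\equiv(-1)^{[ka/2]+[a/2]+[(k-1)a/2]}\pmod4$. Substituting $a=2b+1$ turns each floor into $[na/2]=nb+[n/2]$; the $b$-contributions sum to $2kb$ and vanish modulo $2$, and the identity $[k/2]+[(k-1)/2]=k-1$ collapses the exponent to $k-1\equiv k+1\pmod2$, so $R\equiv(-1)^{k+1}\pmod4$. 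Since the bracket is now $(-1)^{(k+1)a}R-1=(-1)^{k+1}R-1\equiv(-1)^{2(k+1)}-1=0\pmod4$ and $2^{2\alpha}=4$, the divisibility holds here too. I expect this parity bookkeeping for $q=2,\ \alpha=1$ to be the only delicate point; everything else is a clean consequence of the exact $f_q$-factorization together with Lemma~\ref{luo:lemma1}.
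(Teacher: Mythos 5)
Your proof is correct and follows essentially the same route as the paper: the same factorization of the difference as $\binom{km/q-1}{m/q-1}$ times a bracket expressed through the exact $f_q$-identity, Lemma~\ref{luo:lemma1} for $q$ odd or $q=2,\alpha\ge2$, and a separate parity computation for $q=2,\alpha=1$ via $f_2(2n)\equiv(-1)^{[n/2]}\pmod 4$. Your handling of the exceptional case is merely more explicit (the substitution $a=2b+1$ and the identity $[k/2]+[(k-1)/2]=k-1$) than the paper's ``depends only on $k\pmod 2$, verify for $k\in\{1,2\}$.''
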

\begin{proof}
    \begin{align}
        &(-1)^{(k+1)m}\binom{km-1}{m-1}-(-1)^{(k+1)m/q}\binom{km/q-1}{m/q-1} \nonumber \\
       &= (-1)^{(k+1)m}\binom{km/q-1}{m/q-1}\left( \frac{f_q(km)}{f_q((k-1)m)f_q(m)} -(-1)^{(k+1)(m-m/q)}\right) \label{luo:eq2}
    \end{align}
    For $q>2$ or $q=2, \alpha>1$, then $m-m/q$ is even, thus (\ref{luo:eq2}) is divisible by $q^{2\alpha}$ by Lemma~\ref{luo:lemma1}. For $q=2, \alpha=1$, (\ref{luo:eq2}) is divisible by 4 if
 \begin{equation}
        [\frac{km}{4}]+[\frac{(k-1)m}{4}]+[\frac{m}{4}] -(k+1)(m-\frac{m}{2})\equiv 0, \pmod{2} \nonumber
    \end{equation}
    which depends only on $k\pmod{2}$, verify for $k\in \{1,2\}$ to get the results.
\end{proof}

Now, we can finish the proof of Theorem \ref{appendixthm}:
\begin{proof}
    For each prime divisor $q$ of $r$, in the summation in (\ref{luo:eq1}) over $d|r$, pairing the terms with nonzero $\mu(\frac{r}{d})$ and $\mu(\frac{r}{qd})$. Sum of two terms of each pair is divisible by $q^{2\alpha}$ by Lemma~\ref{luo:lemma2}. This is true for all prime divisors of $r$, thus $b_{K_p,r}^{-}$ and $b_{K_p,r}^{+}$ are integers.
\end{proof}

\vskip 30pt

$$ \ \ \ \ $$

\end{document}